\documentclass[a4paper]{iopart}

\parindent=0pt
\usepackage{hyperref}
\usepackage{graphicx}
\expandafter\let\csname equation*\endcsname\relax
\expandafter\let\csname endequation*\endcsname\relax
\usepackage{amsmath}
\usepackage{amsfonts}
\usepackage{amssymb} 
\usepackage{color}
\usepackage{subfig}
\usepackage{algorithm}
\usepackage{algpseudocode}
\usepackage{listings}
\usepackage{amsthm,nicefrac,array}
\usepackage{mathtools}
\usepackage[textfont=it,font=small]{caption}
\usepackage{xcolor}
\usepackage{enumerate}
\usepackage{thmtools}


\DeclareMathOperator*{\argmin}{\arg \min}
\DeclareMathOperator*{\argmax}{\arg \max}

\DeclareMathOperator{\uc}{\mathcal{U}}

\DeclareMathOperator{\vc}{\mathcal{V}}
\DeclareMathOperator{\hc}{\mathcal{H}}

\DeclareMathOperator{\lc}{\mathcal{L}}
\DeclareMathOperator{\rc}{\mathcal{R}}

\DeclareMathOperator{\dom}{dom}


\newcommand{\tv}{\text{TV}}

\newcommand{\norm}[1]{\left\| #1 \right\|}%
\newcommand{\R}{\mathbb{R}}%
\newcommand{\N}{\mathbb{N}}

\newcommand{\ul}{u_{\lambda}}


\newcommand{\ip}[2]{\left\langle #1 , #2 \right\rangle }

\theoremstyle{theorem}
\newtheorem{theorem}{Theorem}[section]
\newtheorem{lemma}[theorem]{Lemma}

\theoremstyle{definition}
\newtheorem{definition}[theorem]{Definition}

\theoremstyle{remark}
\newtheorem{remark}[theorem]{Remark}

\theoremstyle{proposition}
\newtheorem{proposition}[theorem]{Proposition}

\theoremstyle{definition}
\newtheorem{example}[theorem]{Example}

\theoremstyle{definition}
\newtheorem{assumption}[theorem]{Assumption}

\numberwithin{equation}{section}


\setlength{\abovecaptionskip}{-5pt}
\setlength{\belowcaptionskip}{0pt}
\tolerance100

\begin{document}

%
%
%
%
%
%
%

\title[Inverse Scale Space Decomposition]{Inverse Scale Space Decomposition}

\author{Marie Foged Schmidt$^1$, Martin Benning$^2$, Carola-Bibiane Sch\"{o}nlieb$^2$}

\address{$^1$ Technical University of Denmark, Department of Applied Mathematics and Computer Science, Asmussens All\'e, Bldg. 303B, 2800 Kgs. Lyngby, Denmark}
\address{$^2$ University of Cambridge, Department of Applied Mathematics and Theoretical Physics, Wilberforce Road, Cambridge CB3 0WA, United Kingdom}
\ead{mfsc@dtu.dk, mb941@cam.ac.uk, cbs31@cam.ac.uk}

\begin{abstract}
We investigate the inverse scale space flow as a decomposition method for decomposing data into generalised singular vectors. We show that the inverse scale space flow, based on convex and absolutely one-homogeneous regularisation functionals, can decompose data represented by the application of a forward operator to a linear combination of generalised singular vectors into its individual singular vectors. We verify that for this decomposition to hold true, two additional conditions on the singular vectors are sufficient: orthogonality in the data space and inclusion of partial sums of the subgradients of the singular vectors in the subdifferential of the regularisation functional at zero.

We also address the converse question of when the inverse scale space flow returns a generalised singular vector given that the initial data is arbitrary (and therefore not necessarily in the range of the forward operator). We prove that the inverse scale space flow is guaranteed to return a singular vector if the data satisfies a novel dual singular vector condition.

We conclude the paper with numerical results that validate the theoretical results and that demonstrate the importance of the additional conditions required to guarantee the decomposition result.

\textbf{Key words:} Generalised Singular Vectors, Inverse Scale Space Flow, Singular Value Decomposition, Source Conditions, Non-linear Spectral Transform, Total Variation Regularisation, Compressed Sensing.

\end{abstract}

\submitto{\IP}

\maketitle

\section{Introduction}
Regularisation methods are essential tools for the stable approximation of solutions of ill-posed inverse problems. Hence, their analysis is a vital part of inverse problems research. For linear regularisation methods a quite complete theory based on singular value decomposition has been available for a while (cf. \cite{stewart1993early,engl1996regularization}). Only recently though a more general definition of singular vectors for non-linear regularisation methods has been formalised in \cite{Benning_Burger_2013}, despite many previous and recent works on generalised Eigenfunctions in the context of non-linear partial differential equations and functional inequalities (cf. \cite{agueh2010variational,berestycki1980existence,browder1965variational,dacorogna1992generalisation,dolbeault2012symmetry,franzina2010existence,henrot2006extremum,kawohl2000symmetry,kawohl2003isoperimetric,kawohl2007simplicity,weinstein1983nonlinear}), control theory (cf. \cite{fujimoto2004singular}), image processing (cf. \cite{meyer2001oscillating,alter2005characterization,alter2005evolution,caselles2008characterization,edelman1998geometry,chambolle2016geometric,chambolle2016total}), and machine learning (cf. \cite{buhler2009spectral,bresson2010total}). The generalised singular vector definition is based on the minimisation of convex, one-homogeneous but not necessarily differentiable functionals. Classical non-linear regularisation methods for which generalised singular vectors apply include the famous total variation (TV) regularisation model \cite{rudin1992nonlinear} and $\ell^1$-norm regularisation, which is a key tool in compressed sensing (cf. \cite{candes2005decoding,candes2006near,donoho2006compressed,donoho2003optimally}). Newer advancements include the total generalised variation (TGV, cf. \cite{bredies2010total,setzer2008variational,setzer2011infimal,bredies2011inverse,benning2013higher}), regularisations based on the $\ell^1$-norm in combination with the shearlet transform \cite{kutyniok2012shearlets}, vectorial total variation regularisation (cf. \cite{goldluecke2012natural,duran2015novel,duran2016collaborative}), structure-tensor extensions of the total variation (cf. \cite{brox2006nonlinear,grasmair2010anisotropic,lefkimmiatis2013convex,lefkimmiatis2015structure,estellers2015adaptive}), vector-field regularisation \cite{brinkmann2015regularization}, de-biased total variation-type regularisations \cite{brinkmann2016bias} and regularisations based on the $\ell^1$-norm in combination with tight wavelet frames \cite{cai2008framelet,cai2009split,cai2012image}, to name just a few. It has been shown that generalised singular vectors play a vital role in the understanding of those non-linear regularisation methods similar to the role of classical singular vectors for linear regularisations. Key results are that the non-linear regularisation methods are capable of recovering individual singular vectors from data given in terms of a forward model applied to a generalised singular vector and additional measurement errors, just with a systematic bias. Considering corresponding inverse scale space methods (cf. \cite{burger2006nonlinear}) as regularisation methods instead, these methods are capable of recovering generalised singular vectors without systematic bias (cf. \cite{Benning_Burger_2013}). The latter also has significant impact on discretisations of the inverse scale space method, such as Bregman iteration (cf. \cite{osher2005iterative,zhang2011unified}), linearised Bregman iterations (cf. \cite{yin2008bregman,cai2009linearized,cai2009convergence,yin2010analysis}) and modifications (cf. \cite{moeller2014color,zeune2016multiscale}).\\

In \cite{gilboa2013spectral} Gilboa initiated the idea of a non-linear spectral decomposition of singular vectors by defining a spectrum based on the forward-scale space formulation of the TV model. He was able to show that this non-linear spectrum can be converted into the original signal via a linear inverse transform. In \cite{gilboa2014total} he made the connection that a single generalised singular vector (called Eigenfunction therein) of TV can be isolated via this spectral TV transform. In \cite{burger2015spectral} the idea of the non-linear spectral transform has further been extended to arbitrary one-homogeneous functionals, and to variational regularisation as well as the inverse scale space method instead of just the forward TV flow.\\

The definition of the non-linear spectral transform immediately gave rise to the question of circumstances under which it would be possible to decompose a signal composed of singular vectors. In \cite{horesh2015multiscale} this question has briefly been addressed, hinting that some sort of orthogonality condition is necessary for perfect separation (see \cite[Proposition 1]{horesh2015multiscale}). These initial ideas have been made more precise in \cite{gilboa2015semi}; given two singular vectors that are (fully orthogonal \cite[Definition 5]{gilboa2015semi} and) linear in the subdifferential \cite[Definition 7]{gilboa2015semi}, the non-linear spectral transform can separate these singular vectors perfectly (see \cite[Proposition 4]{gilboa2015semi}). In \cite{burger2016spectral} it has further been shown that the  non-linear spectral transform can indeed decompose data into a finite set of singular vectors, given that the corresponding regularisation functional is a one-norm concatenated with a linear matrix such that the matrix applied to its transpose is diagonally dominant (see \cite[Theorem 9]{burger2016spectral}).\\

Despite containing impressive results, we however also observe limitations of the previously mentioned works. First of all, all research is restricted to the non-linear spectral transform, which (in its current form) is not applicable to general inverse problems. Secondly, we either have theoretical conditions that allow us to separate only two singular vectors for general one-homogeneous regularisation functionals, or we have conditions for a specific family of regularisers that allow the decomposition of data into multiple singular vectors. With this work we want to fill in the knowledge gaps by introducing two conditions that guarantee the decomposition of sums of finitely many singular vectors via the inverse scale space method. The first condition will be an orthogonality condition on the finite set of singular vectors, while the second condition will ensure that partial sums of their subgradients are contained in the subdifferential of the corresponding regularisation functional at zero. Our main contribution will be a theorem which states that perfect decomposition of a finite linear combination of generalised singular vectors is guaranteed if these two conditions are satisfied. Subsequently, we are also going to investigate the case of arbitrary data that is not given in terms of a finite linear combination of generalised singular vectors.\\

The paper is organised as follows. First, we are going to introduce notation and mathematical preliminaries that are necessary throughout this paper. Then we formulate and discuss the major contribution of this work which is a result that states that finite sums of generalised singular vectors can be decomposed via the inverse scale space method under suitable conditions. Subsequently we also address the question of how to characterise the first inverse scale space step in case the given data is arbitrary. In this case we investigate under which conditions the first step is a generalised singular vector. Finally, we will present numerical results to support the theoretical results and conclude with an outlook of open questions and problems.

\section{Mathematical Preliminaries}
\label{sec: Preliminaries}
In this section we set the notation, define assumptions and give mathematical preliminaries. Throughout the paper we consider inverse problems of the form
\begin{align}
Ku = f
\label{eq: IP}
\end{align}
where $K:\uc \to \hc$ is a bounded linear operator from a Banach space $\uc$ to a Hilbert space $\hc$. We will denote the space of bounded linear operators between two normed spaces $\uc$ and $\vc$ by $\lc(\uc,\vc)$. We denote by $\uc^*$ the dual Banach space of $\uc$ with norm
\begin{align*}
\norm{p}_{\uc^*} = \sup_{\norm{u}_{\uc} = 1} |p(u)| = \sup_{u\in\uc\backslash\{0\}} \frac{|p(u)|}{\norm{u}_{\uc}} = \sup_{\norm{u}_{\uc} \leq 1} |p(u)|,
\end{align*}
where functional $p(u) = \ip{p}{u}_{\uc^*\times\uc}$ is the dual pairing between $p$ and $u$ that we will abbreviate by $\ip{p}{u}$ throughout the course of this work. If $\uc$ is a Hilbert space then the dual pairing can be identified with the inner product on $\uc$.
\\

A popular variational framework for approximating solutions of (\ref{eq: IP}) is the Tikhonov-type variational regularisation framework of the form
\begin{align}
\hat{u}\in \argmin_{u\in\dom(J)} \left\{ \frac{1}{2}\norm{Ku-f}_{\hc}^{2} + \alpha J(u) \right\},
\label{eq: VariationalFramework}
\end{align}
where $J:\dom(J)\subseteq\uc\to\R\cup\{+\infty\}$ is a regularisation functional that incorporates prior information about $\hat{u}$ and $\alpha>0$ is a regularisation parameter that controls the impact of $J$ on $\hat{u}$. 
\\

Throughout this paper we assume that the regularisation functional $J$ is proper, lower semi-continuous (l.s.c) and convex. We will further assume that $J$ is absolutely one-homogeneous, i.e.
\begin{align}
J(cu) = |c|J(u),\quad\forall c\in\R.\label{eq:absonehom}
\end{align}
Such a functional is non-negative by definition and 
fulfils the triangle inequality 
\begin{align}
J(u+v)\leq J(u) + J(v).
\label{eq: TriangleIneqForJ}
\end{align}
Hence $J$ is in fact a semi-norm. Furthermore, if $v\in\dom(J)$ and $v_0\in\ker(J)$ then
\begin{align}
J(v+v_0) = J(v),
\label{eq: J_kernelinvariance}
\end{align}
see \cite[Lemma 3.2]{burger2016spectral}. The subdifferential $\partial J(u)$ of $J$ at some element $u\in\text{dom}(J)$ will be used several times throughout this work and is defined by
\begin{align*}
\partial J(u) = \left\{p\in\uc^*\,:\, J(v)-J(u)-\ip{p}{v-u} \geq 0, \,\forall v\in\text{dom}(J)\right\}.
\end{align*}
Since $J$ is absolutely one-homogeneous the subdifferential can be characterised by
\begin{align}
\partial J(u) := \{p\in\uc^* \,:\, \ip{p}{u} = J(u),\; \ip{p}{v} \leq J(v),\,\forall v\in\dom(J)\},
\label{eq: subdiff_characterization}
\end{align}
see for instance \cite[Lemma 3.12]{burger2013guide}. As a special case we have
\begin{align*}
\partial J(0) := \{p\in\uc^* \,:\, \ip{p}{v} \leq J(v),\,\forall v\in\dom(J)\}
\end{align*}
since $J(0) = 0$. We observe that the subdifferential for absolutely one-homogenous functionals is invariant with respect to positive scaling, i.e.
\begin{align}
\partial J(cu) = \partial J(u),\;\forall c>0.
\label{eq: subdiff_scaling_invariant}
\end{align}
We also want to emphasise that for a subgradient $p\in\partial J(u)$ we know by the characterisation (\ref{eq: subdiff_characterization}) that $\ip{p}{v} \leq J(v)$ for all $v\in\text{dom}(J)$. Now take $v\in\ker(J)$. Then $\ip{p}{v} \leq 0$. Using $-v$ we then get $\ip{p}{v} \geq 0$. This shows that for absolutely one-homogeneous functionals:
\begin{align}
p\in\partial J(u)\quad \Rightarrow \quad \ip{p}{v} = 0,\;  \forall v\in\ker(J).
\label{eq: SubgradientPerpToKernelJ}
\end{align}

In \cite{osher2005iterative} a contrast-enhancing alternative to \eqref{eq: VariationalFramework} named Bregman iteration has been introduced. This is an iterative regularisation method for which
\begin{align*}
u^{k+1}\in\argmin_{u\in\dom(J)} \left\{\frac{1}{2}\norm{Ku-f}_{\hc}^{2} + \alpha D_J^{p^k}(u, u^k) \right\}\, ,
\end{align*}
where $u^0\in\ker(J)$ and the subgradient $p^k\in\partial J(u^k)$ satisfies $p^0 \equiv 0$ and
\begin{align}
p^{k + 1} = p^k + \frac{1}{\alpha}K^*(f-Ku^{k + 1}),
\label{eq:p_update_BregmanIteration}
\end{align}
for all $k \in \N$. Here $K^*:\hc\to\uc^*$ denotes the Banach adjoint operator of $K$ defined by $\ip{Ku}{v}_{\hc} = \ip{u}{K^*v}_{\uc\times\uc^*}$ for $u\in\uc$, $v\in\hc$. The term $D_J^{p^k}(u^{k + 1}, u^k)$ represents the (generalised) Bregman distance between $u^{k + 1}$ and $u^k$ w.r.t. the functional $J$ and the subgradient $p^k \in \partial J(u^k)$. The Bregman distance (cf. \cite{bregman1967relaxation,burger2016bregman}) is defined by
\begin{align*}
D_J^p(u, v) = J(u) - J(v) - \ip{p}{u-v} \, \text{,} \qquad p \in \partial J(v) \, \text{.}
\end{align*}
Note that the generalised Bregman distance is always non-negative due to the convexity of $J$. Since the Bregman iteration has a semi-convergence behaviour a stopping criterium for the iteration is needed. \\

In the limit $\alpha\to\infty$ we can interpret $\Delta t = 1/\alpha$ as a time step that tends to zero and the update (\ref{eq:p_update_BregmanIteration}) of $p^{k+1}$ for the Bregman iteration can be interpreted as a forward Euler discretisation of the inverse scale space (ISS) flow (cf. \cite{burger2006nonlinear}) 
\begin{align}
\partial_t p(t) = K^*(f-Ku(t)),\quad p(t)\in\partial J(u(t)),
\label{eq: ISSflow}
\end{align}
with $p(0) = 0$ and $u(0) = u^0 \in\ker(J)$. If we assume $u(0) = 0\in\ker(J)$, from the definition of the ISS flow we then obtain $p(t) = tK^*f$ for $0\leq t < t_1$ for some $t_1 > 0$. Hence we would require $p(t) = tK^*f \in \partial J(0)$ for $0\leq t < t_1$, which by (\ref{eq: SubgradientPerpToKernelJ}) implies $\ip{K^*f}{v} = 0$ for all $v\in\ker(J)$. If this is not the case we can project $K^*f$ onto the space of elements $p\in\uc^*$ for which $\ip{p}{v} = 0$ for all $v\in\ker(J)$. In the end we can always add $u^0\in\ker(J)$ to $u(t)$. For the remainder of this work we therefore assume w.l.o.g. that $u(0)=u^0 = 0$ for the initial value of the ISS flow. \\ 

We want to point out that the solution $u(t)$ for the ISS flow has to satisfy an orthogonality condition:
\begin{proposition}[Necessary Orthogonality Condition]
\label{prop: general_orthogonality_cond}
Let $u(t)$ be a solution of the inverse scale space flow (\ref{eq: ISSflow}). Then $u(t)$ satisfies the orthogonality condition
\begin{align}
\ip{K^*(f-Ku(t))}{u(t)} = 0.
\label{eq: general_orthogonality_cond}
\end{align}
\end{proposition}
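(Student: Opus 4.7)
The plan is to exploit the pairing identity $\langle p(t), u(t)\rangle = J(u(t))$, which holds for every $t$ because $p(t) \in \partial J(u(t))$ and $J$ is absolutely one-homogeneous, see the characterisation \eqref{eq: subdiff_characterization}. Differentiating this identity in $t$ and isolating the term $\langle \partial_t p(t), u(t)\rangle$ will produce the orthogonality condition after one substitution of the flow equation.

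The cleanest way to do this is to work on a time interval $(t_k, t_{k+1})$ on which $u(t) \equiv u_k$ is constant, which is the generic structure of ISS solutions. On such an interval $\langle p(t), u_k\rangle = J(u_k)$ is independent of $t$, so differentiating in $t$ gives $\langle \partial_t p(t), u_k\rangle = 0$, and inserting the ISS equation $\partial_t p(t) = K^*(f - K u(t))$ yields \eqref{eq: general_orthogonality_cond} immediately. The identity extends to the exceptional scale-changing times by one-sided continuity of $p$ and $u$. An alternative, purely formal, derivation is to apply the product rule on the left of $\langle p(t), u(t)\rangle = J(u(t))$ together with the subgradient chain rule $\frac{d}{dt} J(u(t)) = \langle p(t), \partial_t u(t)\rangle$, established via the sandwich
\begin{equation*}
\langle p(t), u(t+h) - u(t)\rangle \leq J(u(t+h)) - J(u(t)) \leq \langle p(t+h), u(t+h) - u(t)\rangle
\end{equation*}
that follows from $p(t) \in \partial J(u(t))$ and $p(t+h) \in \partial J(u(t+h))$; the two terms in $\partial_t u$ then cancel and what remains is $\langle \partial_t p(t), u(t)\rangle = 0$.

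The main obstacle I expect is securing enough time regularity for the differentiation to be rigorous, since ISS solutions are only piecewise regular in $u$; this is bypassed by exploiting the piecewise-constant structure of $u(t)$, so the argument reduces to the one-line computation above.
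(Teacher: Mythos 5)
Your proposal is correct and your ``alternative, purely formal'' derivation is exactly the paper's proof: differentiate the one-homogeneity identity $\langle p(t),u(t)\rangle = J(u(t))$ with the product rule, cancel against the subgradient chain rule $\frac{d}{dt}J(u(t)) = \langle p(t),\partial_t u(t)\rangle$, and substitute the flow equation; your sandwich inequality in fact justifies that chain rule more carefully than the paper does. Your ``cleanest'' first route is the only point of caution, since it presupposes the piecewise-constant structure of $u(t)$, which is not a hypothesis of the proposition, but the general argument you give alongside it makes this immaterial.
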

\begin{proof}
Using the chain rule for differentiation and $p(t)\in\partial J(u(t))$ we get
\begin{align*}
\frac{d }{d t}J(u(t)) = \ip{p(t)}{\partial_t u(t)}.
\end{align*}
On the other hand, since $p(t)\in\partial J(u(t))$ and by the absolute one-homogeneity of $J$ we get
\begin{align*}
\frac{d }{d t}J(u(t)) = \frac{d }{d t}\ip{p(t)}{u(t)} = \ip{p(t)}{\partial_t u(t)} + \ip{\partial_t p(t)}{u(t)}.
\end{align*}
Hence we see that a solution $u(t)$ of (\ref{eq: ISSflow}) needs to satisfy
\begin{align*}
\ip{\partial_t p(t)}{u(t)} = 0.
\end{align*}
From the definition of the inverse scale space flow we know $\partial_t p(t) = K^*(f-Ku(t))$ and hence
\begin{align*}
\ip{K^*(f - Ku(t))}{u(t)} = 0.
\end{align*}
\end{proof}

If we ignore the one-homogeneity of $J$ for a moment it becomes evident that the ISS flow is a generalisation of Showalter's method \cite{showalter1967representation}, i.e.
\begin{align}
\partial_t u(t) = K^*(f - Ku(t) ) \, \text{,}\label{eq:showalter}
\end{align}
which is \eqref{eq: ISSflow} for $J(u) = \frac{1}{2}\| u \|_{L^2(\Omega)}^2$, where $\Omega \subset \R^m$ is some domain. It is well known that for compact $K$ between Hilbert spaces, solutions of \eqref{eq:showalter} - as those of other linear regularisation methods - can conveniently be expressed in terms of $K$'s system of classical singular vectors. That is, for two orthonormal bases $\{ v_j \}_{j \in \N}$ of $\overline{\rc(K)}$ and $\{ u_j \}_{j \in \N}$ of $\overline{\rc(K^*)}$, where $\rc(K)$ is the range of $K$ and $\rc(K^*)$ is the range of $K^*$, and a null sequence $\{ \sigma_j \}_{ j \in \N}$ satisfying
\begin{align*}
Ku_j = \sigma_j v_j \qquad \text{and} \qquad K^* v_j = \sigma_j u_j
\end{align*}
for all $j \in \N$, we can solve \eqref{eq:showalter} via
\begin{align}
u(t) = \sum_{j = 1}^\infty \left( 1 - e^{- \sigma_j^2 t} \right) \frac{1}{\sigma_j} \langle f, v_j \rangle u_j \, \text{.}\label{eq:showaltersol}
\end{align}
If we now assume that our data is given in terms of some linear combination of finitely many singular vectors, i.e. we have $f = Ku^\dagger$ for
\begin{align*}
u^\dagger = \sum_{j = 1}^n \gamma_j u_j
\end{align*}
where $\gamma_j \in \R$ and $n \in \N$, then \eqref{eq:showaltersol} simplifies to 
\begin{align*}
u(t) = \sum_{j = 1}^n \left( 1 - e^{- \sigma_j^2 t} \right) \gamma_j u_j \, \text{.}
\end{align*}
Hence, Showalter's method allows us to recover a weighted linear combination of the singular vectors of $u^\dagger$ from data $f = Ku^\dagger$.\\

One of our main goals throughout the course of this work is to study solutions of the ISS flow \eqref{eq: ISSflow}. With the example of Showalter's method we have seen that for the specific choice of $J(u) = \frac{1}{2}\| u \|_{L^2(\Omega)}^2$ solutions of \eqref{eq:showalter} are just simple transformations of the classical singular vectors if the data $f$ is represented by a composition of those. However, as we are interested in \eqref{eq: ISSflow} in combination with absolutely one-homogeneous regularisation functionals, classical singular vectors are of no use for our study. A remedy to overcome this issue seems to be the use of so-called \emph{generalised singular vectors} as defined in \cite{Benning_Burger_2013}. Generalised singular vectors are based on the idea of minimising the generalised Rayleigh quotient
\begin{align}
u_g \in \argmin_{\substack{u \in \ker(J)^\perp \\ \|Ku\|_{\hc} = 1}} \frac{J(u)}{\| Ku \|_{\hc}} \label{eq:rayleighquot} \, \text{.}
\end{align}
Here $u_g$ denotes the so-called \emph{non-trivial ground state} of the functional $J$, following \cite{agueh2010variational}, for which we have $J(u_g) \leq J(u)$ for all $u \in \dom(J)$ with $\norm{Ku}_{\hc} = 1$. The set $\ker(J)^\perp$ is the orthogonal complement of $\ker(J)$ for which we follow the definition of \cite{Benning_Burger_2013} and define
\begin{align*}
\ker(J)^{\perp} = \{u\in\dom(J) \,:\, \ip{Ku}{Kv} = 0,\,\forall v\in\ker(J)\}.
\end{align*}
Note that we have to ensure $\ker(K) \cap \ker(J) = \{ 0 \}$ for this definition to make sense. Otherwise every element $u \in \ker(K) \cap \ker(J)$ would both satisfy $u \in \ker(J)$ and $u \in \ker(J)^\perp$. \\

We want to highlight that we require the normalisation constraint in \eqref{eq:rayleighquot}; otherwise every function $c u_g$ with $c\in\R\setminus\{0\}$ is a solution of \eqref{eq:rayleighquot} given that $u_g$ is already a solution of \eqref{eq:rayleighquot}. We could then make $c$ arbitrarily close to zero to decrease $J(c u_g) = |c| J(u_g)$ due to the absolute one-homogeneity of $J$. \\

Now let us reformulate \eqref{eq:rayleighquot} in terms of the Lagrange saddle-point problem
\begin{align}
(u_g, \mu_g, \eta_g) = \arg \min_u \max_\mu \max_\eta \left\{ \frac{J(u)}{\| Ku \|_{\hc}} + \mu \left( \| Ku \|_{\hc} - 1 \right) + \eta \langle Ku, Kv \rangle \right\} \, \text{,}\label{eq:saddlepointform}
\end{align}
with the corresponding Lagrange multipliers $\mu_g$ and $\eta_g$, for all $v \in \ker(J)$. We want to investigate the optimality condition of \eqref{eq:saddlepointform} with respect to $u_g$ more closely, i.e.
\begin{align}
0 \in \frac{\partial J(u_g)\|Ku_g\|_{\hc} - \frac{J(u_g)}{\|Ku_g\|_{\hc}} K^*Ku_g}{\|Ku_g\|_{\hc}^2} + \frac{\mu_g}{\|Ku_g\|_{\hc}} K^* Ku_g + \eta_g K^* Kv \, \text{.}\label{eq:sadpointoptu} 
\end{align}
Computing the dual pairing of \eqref{eq:sadpointoptu} with $u_g$ therefore yields the relation
\begin{align*}
0 = \underbrace{\frac{J(u_g)\|Ku_g\|_{\hc} - J(u_g)\|Ku_g\|_{\hc}}{\|Ku_g\|_{\hc}^2}}_{= 0} + \mu_g \underbrace{\|Ku_g\|_{\hc}}_{\neq 0} + \eta_g \underbrace{\langle Kv, Ku_g \rangle}_{= 0} \, \text{,}
\end{align*}
which implies $\mu_g = 0$. 
If we further compute the dual product of \eqref{eq:sadpointoptu} with $v \in \ker(J)$ we observe
\begin{align*}
0 = \underbrace{\frac{\langle p_g, v \rangle \|Ku_g\|_{\hc} - \frac{J(u_g)}{\|Ku_g\|_{\hc}} \langle Ku_g, Kv \rangle}{\|Ku_g\|_{\hc}^2}}_{= 0} + \frac{\mu_g}{\| Ku_g\|_{\hc}} \underbrace{\langle Ku_g, Kv \rangle }_{= 0} + \eta_g \| Kv \|_{\hc}^2  \, \text{,}
\end{align*}
where $\langle p_g, v \rangle = 0$ for $p_g \in \partial J(u_g)$ due to \eqref{eq: SubgradientPerpToKernelJ}. Hence, unless the kernel of $J$ is trivial, we immediately observe $\eta_g = 0$. \\ 

Investigating \eqref{eq:sadpointoptu} for $\mu_g = 0$, $\eta_g = 0$ and $\| Ku_g \|_{\hc} \neq 0$ leaves us with the condition
\begin{align}
\lambda_g K^* K u_g \in \partial J(u_g) \label{eq:gensingvec1} \, \text{,}
\end{align}
for $\lambda_g := J(u_g) / \|Ku_g\|_{\hc}^2$ and $p_g \in \partial J(u_g)$. Using the normalisation $\norm{Ku_g}_{\hc} = 1$ and due to the absolute one-homogeneity \eqref{eq:absonehom} we conclude
\begin{align*}
\lambda_g = \ip{p_g}{u_g} = J(u_g).
\end{align*}
Equation \eqref{eq:gensingvec1} now characterises the ground state that solves \eqref{eq:rayleighquot}. However, there may exist other functions satisfying 
\begin{align}
\lambda K^* K \ul &\in \partial J(\ul) \, \text{,}\label{eq:gensingvec}
\intertext{with}
\lambda = \frac{J(\ul)}{\norm{K\ul}_{\hc}^2} &\geq \lambda_g = \frac{J(u_g)}{\norm{Ku_g}_{\hc}^2} \, \text{.}\label{eq:gensingval}
\end{align}
A function $u_\lambda$ that satisfies \eqref{eq:gensingvec} and \eqref{eq:gensingval} is called a \emph{generalised singular vector} with \emph{generalised singular value} $\lambda$. We normally assume $\norm{K\ul}_{\hc} = \norm{Ku_g}_{\hc} = 1$. Throughout the course of this work we will use the term \textit{singular vectors} for generalised singular vectors as well. Note that the term \textit{vector} does not refer to an element in $\R^m$ but to an element in a possibly infinite dimensional space. Additionally, note that if $\ul$ is a singular vector then also $-\ul$ is a singular vector for absolutely one-homogeneous $J$.
\begin{example}
We want to briefly demonstrate that the concept of generalised singular vectors \eqref{eq:gensingvec} is indeed a generalisation of classical singular vectors in inverse problems. If we consider the absolutely one-homogeneous regularisation functional $J(u) = \| u \|_{L^2(\Omega)}$ for $\uc = L^2(\Omega)$ on some domain $\Omega \subset \R^m$ we obtain $\partial J(u) = \{ u / \| u \|_{L^2(\Omega)} \}$ for $u$ with $\| u \|_{L^2(\Omega)} \neq 0$. Hence, \eqref{eq:gensingvec} reads as
\begin{align*}
\lambda K^* K \ul = \frac{\ul}{\| \ul \|_{L^2(\Omega)}}
\end{align*}
for $\lambda = \| \ul \|_{L^2(\Omega)} / \| K\ul \|_{\hc}^2$. If we multiply this equation by $1/\lambda$ and define $\sigma = \| K\ul \|_{\hc} / \| \ul \|_{L^2(\Omega)}$, we therefore obtain the classical singular vector condition
\begin{align*}
K^* K \ul = \sigma^2 \ul \, \text{.}
\end{align*}
\end{example}
\begin{example}\label{exm:tvsingvec}
Let $K=I:H^1_0(\Omega) \rightarrow L^2_0(\Omega)$ be the embedding operator from $H^1_0(\Omega)$ into $L^2_0(\Omega)$, where subscript zero means that the funtions are zero on the boundary of the bounded domain $\Omega\subset \R^m$, and let $J(u) = \| \nabla u \|_{L^2(\Omega; \R^m)}$. Then \eqref{eq:gensingvec} reads as
\begin{align*}
- \frac{\| \nabla \ul \|_{L^2(\Omega; \R^m)}^2}{\| \ul \|_{L^2(\Omega)}^2} \ul = \Delta \ul \, \text{,}
\end{align*}
which is simply the Eigenvalue problem of the Laplace operator.
\end{example}
\begin{example}\label{exm:dictionaryexm}
Let $K = I : \ell^1(\R^m) \to \ell^2(\R^m)$ be an embedding operator, and let $J(u) = \norm{Wu}_{\ell^1(\R^m)}$ where $W:\ell^1(\R^m)\to\ell^1(\R^m)$ and $W^*W = I$. The subdifferential of $J$ at $u\in\ell^1(\R^m)$ is characterised by
\begin{align*}
\partial J(u) = \left( W^*\circ\partial\norm{\cdot}_{\ell^1(\R^m)}\right)(Wu) \text{,}
\end{align*}
where
\begin{align*}
\partial\norm{v}_{\ell^1(\R^m)} = \text{sign}(v), \quad \text{sign}(v)_i \in \left\{
        \begin{array}{ll}
            \{1\}, & \quad v_i>0\\
            \{-1\}, & \quad v_i<0 \\
            {[}-1,1{]}, & \quad v_i=0
        \end{array}
    \right. \, \text{.}
\end{align*}
Then every vector $u_{\lambda}\in\R^m$ making $Wu_{\lambda}$ consist of peaks of the same magnitude is a singular vector:
\begin{align*}
\lambda K^*Ku_{\lambda} = \frac{\norm{Wu_{\lambda}}_{\ell^1(\R^m)}}{\norm{u_{\lambda}}_{\ell^2(\R^m)}^2} u_{\lambda} = \frac{\norm{Wu_{\lambda}}_{\ell^1(\R^m)}}{\norm{u_{\lambda}}_{\ell^2(\R^m)}^2}W^*Wu_{\lambda} = W^*\left(\frac{\norm{Wu_{\lambda}}_{\ell^1(\R^m)}}{\norm{u_{\lambda}}_{\ell^2(\R^m)}^2}Wu_{\lambda}\right).
\end{align*}
We need to show $\frac{\norm{Wu_{\lambda}}_{\ell^1(\R^m)}}{\norm{u_{\lambda}}_{\ell^2(\R^m)}^2}Wu_{\lambda}\in\text{sign}(Wu_{\lambda})$. Now
\begin{align*}
\norm{u_{\lambda}}_{\ell^2(\R^m)}^{2} = \ip{u_{\lambda}}{u_{\lambda}} = \ip{u_{\lambda}}{W^*Wu_{\lambda}} = \ip{Wu_{\lambda}}{Wu_{\lambda}} = \norm{Wu}_{\ell^2(\R^m)}^{2}.
\end{align*}
If $c$ is the magnitude of the peaks of $Wu_{\lambda}$ and $n$ is the number of peaks then $\norm{Wu_{\lambda}}_{\ell^1(\R^m)} = n|c|$ and $\norm{Wu_{\lambda}}_{\ell^2(\R^m)}^{2} = nc^2$. Hence
\begin{align*}
\frac{\norm{Wu_{\lambda}}_{\ell^1(\R^m)}}{\norm{u_{\lambda}}_{\ell^2(\R^m)}^2}Wu_{\lambda} = \frac{1}{|c|}Wu_{\lambda}\in\text{sign}(Wu_{\lambda}).
\end{align*}
If we further want $u_{\lambda}$ to be normalised we need $c = \pm\frac{1}{\sqrt{n}}$. The singular value $\lambda$ then simplifies to $\lambda = \sqrt{n}$.
\end{example}
\begin{example}
Let $J(u) = \tv_*(u)$ be the slightly modified total variation regularisation as defined in \cite[Section 4.1]{Benning_Burger_2013}:
\begin{align*}
\tv_*(u) := \underset{\substack{\varphi\in C^{\infty}(\Omega;\R^m),\\ \norm{\varphi}_{L^{\infty}(\Omega,\R^m)}\leq 1}}{\sup} \int_{\Omega} u\,\text{div}\varphi\, dx,
\end{align*}
where $\Omega\subseteq\R^m$ is a bounded domain. In contrast to the normal $\tv$-functional we do not choose test functions $\varphi$ with compact support, which yields an additional boundary term. For functions $u\in W^{1,1}([0,1])\cap C([0,1])$ we have
\begin{align*}
\tv_*(u) = \int_{0}^{1} |u'(x)|\, dx + |u(1)| + |u(0)|.
\end{align*}
Let $K = I:BV([0,1])\to L^2([0,1])$ be an embedding operator. It is then shown in \cite[Theorem 2]{Benning_Burger_2013} that the Haar wavelet basis is an orthonormal set of singular vectors for $K$ and $J$. Examples are for instance the one-dimensional Haar wavelets
\begin{align*}
u_{\lambda_1}(x) = \left\{
        \begin{array}{ll}
            1, & 0\leq x < 1/2\\
            -1, & 1/2 \leq x < 1 \\
            0, & \text{otherwise}
        \end{array}
    \right. \, \text{,}\quad \text{and} \quad  u_{\lambda_2}(x) = \left\{
        \begin{array}{ll}
            \sqrt{2}, & 0\leq x < 1/4\\
            -\sqrt{2}, & 1/4 \leq x < 1/2 \\
            0, & \text{otherwise}
        \end{array}
    \right. \, \text{,}
\end{align*}
with $\lambda_1 = \tv_*(u_{\lambda_1}) = 4$ and $\lambda_2 = \tv_*(u_{\lambda_2}) = 4\sqrt{2}$.
\end{example}

To see why generalised singular vectors are interesting in the context of inverse scale space regularisation, we recall the following theorem from \cite{Benning_Burger_2013}:
\begin{theorem}[{\cite[Theorem 9]{Benning_Burger_2013}}]\label{thm:ReconOfOneSingVec}
Let $J$ be proper, convex, l.s.c and absolutely one-homogeneous and let $\ul$ be a generalised singular vector with corresponding singular value $\lambda$. Then, if the data $f$ is given by $f = \gamma K\ul$ for a positive constant $\gamma$, a solution of the inverse scale space flow \eqref{eq: ISSflow} is given by
\begin{align*}
u(t) = \begin{cases} 0, & t < t_1,\\ \gamma \ul, & t_1 \leq t, \end{cases}
\end{align*}
for $t_1 = \lambda/\gamma$.
\end{theorem}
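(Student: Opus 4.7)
The plan is to verify directly that the proposed piecewise-constant candidate $u(t)$, together with an explicit choice of $p(t)$, solves the inverse scale space flow \eqref{eq: ISSflow} in the sense that $\partial_t p(t) = K^*(f - Ku(t))$ pointwise, $p(0) = 0$, and $p(t) \in \partial J(u(t))$ for every $t$. The natural candidate for the subgradient is
\begin{align*}
p(t) = \begin{cases} t \gamma K^* K u_\lambda, & 0 \le t \le t_1,\\ \lambda K^* K u_\lambda, & t_1 \le t,\end{cases}
\end{align*}
obtained by integrating $\partial_t p = K^*f = \gamma K^*Ku_\lambda$ on $[0,t_1]$ starting from $p(0) = 0$, and then observing that once $u(t) = \gamma u_\lambda$ we have $Ku(t) = f$, so $\partial_t p = 0$ on $(t_1,\infty)$. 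Note that $p$ is continuous at $t_1$ since $t_1\gamma = \lambda$.

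First I would check the subgradient inclusion on the first interval $0 \le t < t_1$, where $u(t) = 0$. Using the characterisation \eqref{eq: subdiff_characterization}, $p(t)\in\partial J(0)$ is equivalent to $\langle p(t),v\rangle \le J(v)$ for all $v\in\dom(J)$. Since $u_\lambda$ is a generalised singular vector with value $\lambda$, the inclusion $\lambda K^*Ku_\lambda \in\partial J(u_\lambda)$ combined with \eqref{eq: subdiff_characterization} gives $\langle \lambda K^*Ku_\lambda, v\rangle \le J(v)$ for all $v\in\dom(J)$. Multiplying by the scalar $t\gamma/\lambda \in [0,1)$ yields exactly $\langle p(t),v\rangle \le J(v)$, so $p(t)\in\partial J(0)$ on the first interval.

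Next I would check the subgradient inclusion on $t \ge t_1$, where $u(t) = \gamma u_\lambda$ and $p(t) = \lambda K^*Ku_\lambda$. By the singular vector condition, $\lambda K^*Ku_\lambda \in \partial J(u_\lambda)$, and by the positive-scaling invariance \eqref{eq: subdiff_scaling_invariant} with $\gamma > 0$ we have $\partial J(u_\lambda) = \partial J(\gamma u_\lambda)$, so $p(t)\in\partial J(\gamma u_\lambda) = \partial J(u(t))$. Finally the flow equation is satisfied on both intervals by construction: on $[0,t_1)$, $\partial_t p(t) = \gamma K^*K u_\lambda = K^*f = K^*(f - Ku(t))$; on $(t_1,\infty)$, $\partial_t p(t) = 0 = K^*(f - K(\gamma u_\lambda))$.

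The only mildly delicate point is the jump of $u$ at $t = t_1$, but the ISS flow imposes continuity and differentiability only on $p$, while $u$ is merely constrained via the inclusion $p(t)\in\partial J(u(t))$; both one-sided limits of $u$ at $t_1$ are admissible since $p(t_1) = \lambda K^*Ku_\lambda$ lies in both $\partial J(0)$ (boundary case of the estimate above with $t = t_1$) and $\partial J(\gamma u_\lambda)$, so the construction is consistent. Hence the candidate pair $(u,p)$ is a solution of \eqref{eq: ISSflow}, which completes the proof.
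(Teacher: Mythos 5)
Your proof is correct and is essentially the standard verification: the paper does not reprove this cited result, but its own proof of the more general Theorem \ref{thm: ExactReconstruction} reduces, for $n=1$, to exactly your argument (integrate $\partial_t p = K^*f$ to get $p(t) = t\gamma K^*Ku_\lambda$, check $p(t)\in\partial J(0)$ via the characterisation \eqref{eq: subdiff_characterization} and $J\ge 0$, then use the scaling invariance \eqref{eq: subdiff_scaling_invariant} for $t\ge t_1$). The only implicit step worth making explicit is that multiplying $\langle \lambda K^*Ku_\lambda, v\rangle \le J(v)$ by $t\gamma/\lambda\in[0,1)$ preserves the bound precisely because $J(v)\ge 0$.
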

Hence, the ISS flow \eqref{eq: ISSflow} is able to recover $\ul$ from data $f = K\ul$ with no systematic bias. In other words, generalised singular vectors are invariant under inverse scale space regularisation. This makes generalised singular vectors useful tools for the analysis of the ISS flow. \\

A second result that we want to recall is the following result in case of data $f = \gamma K\ul + g$ that can additively be decomposed into a singular vector and an arbitrary element $g \in \hc$.
\begin{theorem}[{\cite[Theorem 10]{Benning_Burger_2013}}]\label{thm:noisyiss}
Let $J$ be proper, convex, l.s.c and absolutely one-homogeneous and let $\ul$ be a generalised singular vector with corresponding singular value $\lambda$. Then, if the data $f$ is given by $f = \gamma K\ul + g$ for a positive constant $\gamma$, such that there exist constants $\mu$ and $\eta$ satisfying
\begin{align}
\mu K^* Ku_{\lambda} + \eta K^* g \in \partial J(u_\lambda)\label{eq:noisecond}
\end{align}
with $\gamma > \mu/\eta$, a solution of the inverse scale space flow \eqref{eq: ISSflow} is given by
\begin{align*}
u(t) = \begin{cases} 0, & t < t_1,\\ \left( \gamma + \frac{\lambda - \mu}{\eta} \right) \ul, & t_1 \leq t < t_2, \end{cases}
\end{align*}
for $t_1 = (\lambda \eta)/(\lambda + \gamma\eta - \mu) < t_2 = \eta$.
\end{theorem}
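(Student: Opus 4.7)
The plan is to verify directly that the proposed piecewise constant $u(t)$, together with an explicit candidate $p(t)$ obtained by integrating the flow equation, is a valid solution of \eqref{eq: ISSflow}. Write $q_1 := \lambda K^*Ku_\lambda$ and $q_2 := \mu K^*Ku_\lambda + \eta K^* g$, so that by the singular vector relation \eqref{eq:gensingvec} and the hypothesis \eqref{eq:noisecond} both $q_1, q_2 \in \partial J(u_\lambda)$. Since $\partial J(u_\lambda) \subseteq \partial J(0)$ — a direct consequence of the characterisation \eqref{eq: subdiff_characterization}, because any $q\in\partial J(u_\lambda)$ satisfies $\langle q,v\rangle \leq J(v)$ for all $v$ — any convex combination of $q_1$ and $q_2$ lies simultaneously in $\partial J(u_\lambda)$ and in $\partial J(0)$.

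For the first phase ($0 \leq t < t_1$), take $u(t) \equiv 0$; then the flow dictates $p(t) = t K^* f = t(\gamma K^*Ku_\lambda + K^*g)$. I would choose $s(t) := 1 - t/\eta \in [0,1]$ and write $(1-s)q_2 + s\,q_1 = \bigl(s\lambda + (1-s)\mu\bigr) K^*Ku_\lambda + (1-s)\eta K^*g$. Matching the coefficient of $K^*g$ forces $1-s = t/\eta$; for the coefficient of $K^*Ku_\lambda$ to equal $t\gamma$ one computes $\lambda - t(\lambda-\mu)/\eta = t\gamma$, which is solved by exactly $t = t_1 = \lambda\eta/(\lambda+\gamma\eta-\mu)$. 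Thus $p(t_1) = t_1 K^* f$ is a convex combination of $q_1, q_2$ and belongs to $\partial J(u_\lambda)$. For $t \in [0,t_1]$, $p(t) = (t/t_1)(t_1 K^*f)$ is a convex combination of $0$ and $p(t_1)$, hence lies in $\partial J(0)$ (which is convex and contains $0$), so phase one is consistent with $u(t)=0$.

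At $t = t_1$ the solution jumps to $u(t_1) = c\, u_\lambda$ with $c := \gamma + (\lambda-\mu)/\eta$. The assumption $\gamma > \mu/\eta$ (together with $\lambda \geq 0$) ensures $c > 0$, so by the scale invariance \eqref{eq: subdiff_scaling_invariant} we have $\partial J(u(t_1)) = \partial J(u_\lambda)$, and the already-established $p(t_1) \in \partial J(u_\lambda)$ provides a valid subgradient. For the second phase ($t_1 \leq t < t_2$), I keep $u(t) \equiv c u_\lambda$ constant and integrate the ODE to obtain
\begin{equation*}
p(t) = p(t_1) + (t-t_1) K^*\bigl(f - cKu_\lambda\bigr) = \bigl[\lambda - t(\lambda-\mu)/\eta\bigr] K^*Ku_\lambda + t K^*g,
\end{equation*}
which is exactly the convex combination $(1 - t/\eta)\,q_1 + (t/\eta)\,q_2$ valid for $t \in [0,\eta]$. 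Hence $p(t) \in \partial J(u_\lambda) = \partial J(u(t))$ throughout $[t_1, t_2]$ with $t_2 = \eta$, as claimed.

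Finally, I would verify the ordering $t_1 < t_2$: the inequality $\lambda\eta/(\lambda+\gamma\eta-\mu) < \eta$ reduces to $\gamma\eta - \mu > 0$, i.e.\ exactly $\gamma > \mu/\eta$. The main obstacle is not technical difficulty but rather the bookkeeping: one must reverse-engineer the correct convex-combination parameter $s(t) = 1 - t/\eta$, which simultaneously produces the advertised value of $t_1$ and certifies subgradient inclusion on both sides of the jump. Once that ansatz is found, the verification reduces to algebraic matching of coefficients and an application of $\partial J(u_\lambda) \subseteq \partial J(0)$.
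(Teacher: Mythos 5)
Your verification is correct. The paper does not actually prove this statement — it is quoted as \cite[Theorem 10]{Benning_Burger_2013} — but your argument, namely writing $p(t)$ in each phase as an explicit convex combination of the two subgradients $q_1=\lambda K^*Ku_\lambda$ and $q_2=\mu K^*Ku_\lambda+\eta K^*g$ (both lying in $\partial J(u_\lambda)\subseteq\partial J(0)$, which is the mechanism behind the original proof), is sound: the coefficient matching yields exactly $t_1=\lambda\eta/(\lambda+\gamma\eta-\mu)$, the second-phase formula $p(t)=(1-t/\eta)q_1+(t/\eta)q_2$ stays in $\partial J(u_\lambda)$ up to $t_2=\eta$, and $c=\gamma+(\lambda-\mu)/\eta>0$ together with \eqref{eq: subdiff_scaling_invariant} justifies $\partial J(cu_\lambda)=\partial J(u_\lambda)$. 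The only hypotheses you use tacitly are $\eta>0$ and $\lambda\geq 0$, both implicit in the theorem ($t_2=\eta$ is a time, and $\lambda=J(u_\lambda)/\norm{Ku_\lambda}_{\hc}^2\geq 0$).
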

The original rationale behind Theorem \ref{thm:noisyiss} was to allow for deterministic noise in the data, whilst showing that the singular vector can still be recovered. However, the additional term $g$ can obviously also be interpreted as a correction term for the singular vector $u_\lambda$ to match data $f$. \\

In fact, \eqref{eq:noisecond} is nothing but a specific type of source condition. For a given function $u^\dagger$, the \emph{source condition} is defined as
\begin{align*}
\rc(K^*) \cap \partial J(u^\dagger) \neq \emptyset \, \text{.}
\end{align*}
It ensures the existence of a source element $v \in \hc \setminus \{ 0 \}$ such that
\begin{align}
K^* v \in \partial J(u^\dagger) \tag{SC}\label{eq:sc}
\end{align}
is satisfied. We immediately see that \eqref{eq:noisecond} is equivalent to \eqref{eq:sc} for the specific choice $v = \mu K\ul + \eta g$ and $u^\dagger = \ul$. \\

The (generalised) \emph{strong source condition} as introduced in \cite{resmerita2005regularization}, in contrast to \eqref{eq:sc}, is defined as
\begin{align*}
\rc(K^* K) \cap \partial J(u^\dagger) \neq \emptyset \, \text{,}
\end{align*}
which guarantees the existence of a source element $w \in \uc \setminus \{ 0 \}$ with
\begin{align}
K^* Kw \in \partial J(u^\dagger) \, \text{.} \tag{SSC}\label{eq:ssc}
\end{align}
We want to highlight the following connections between the singular vector condition \eqref{eq:gensingvec} and the two source conditions \eqref{eq:sc} and \eqref{eq:ssc}. Starting with \eqref{eq:ssc} it is obvious that any singular vector $\ul$ satisfying \eqref{eq:gensingvec} does also satisfy \eqref{eq:ssc} with $u^\dagger = \ul$ and $w = \lambda \ul$. The converse is not true in general, which we want to demonstrate with the following example. Let $\uc = \hc = L^2(\Omega)$, $J(u) = \| u \|_{L^2(\Omega)}$ and $u^\dagger\in L^2(\Omega)$ with $\|u^\dagger \|_{L^2(\Omega)} = 1$. Then \eqref{eq:ssc} reads as $K^* K w = u^\dagger$ and we see immediately that $w$ does not need to be a multiple of $u^\dagger$ in this case. We now want to look more closely into the connection between \eqref{eq:gensingvec} and \eqref{eq:sc}.
\begin{proposition} 
\label{prop:svecscconnection}
Let $A \in \lc(\uc, \hc)$ and $f \in \hc$, and define $K \in \lc(\uc, \R)$ with $Ku := \langle A^* f, u \rangle$ for all $u \in \uc$. Then the singular vector condition \eqref{eq:gensingvec} for $K$ is equivalent to the source condition \eqref{eq:sc}
\begin{align*}
A^* v_\lambda \in \partial J(\ul) \, \text{,}
\end{align*}
for $v_\lambda = J(\ul) \langle A^* f, \ul \rangle f$ and $\left|\ip{A^*f}{u_{\lambda}}\right| = 1.$
\end{proposition}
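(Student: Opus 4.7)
The plan is to unfold the definitions on both sides of the claimed equivalence and verify that they coincide term by term; the content of the proposition is essentially a computation once the adjoint of the rank-one operator $K$ is identified correctly.

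First I would compute $K^*$ explicitly. Since $K \in \lc(\uc, \R)$ is defined by $Ku = \ip{A^* f}{u}$, the corresponding Banach adjoint $K^* : \R \to \uc^*$ is determined by the identity $\ip{Ku}{r}_{\R} = \ip{u}{K^*r}_{\uc \times \uc^*}$ for all $u \in \uc$ and $r \in \R$. A direct calculation then gives
\begin{align*}
K^* r = r \, A^* f \qquad \text{in } \uc^* \, \text{.}
\end{align*}
In particular, $K^* K u_\lambda = \ip{A^* f}{u_\lambda} \, A^* f$.

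Next I would pin down the value of the generalised singular value $\lambda$ using the assumed normalisation. Since $\hc = \R$ here, the condition $\left| \ip{A^* f}{u_\lambda} \right| = 1$ is exactly $\| K u_\lambda \|_{\hc}^2 = 1$, so
\begin{align*}
\lambda = \frac{J(u_\lambda)}{\| K u_\lambda \|_{\hc}^2} = J(u_\lambda) \, \text{.}
\end{align*}
Substituting both of these into the singular vector condition \eqref{eq:gensingvec}, I obtain the equivalent statement
\begin{align*}
J(u_\lambda) \ip{A^* f}{u_\lambda} \, A^* f \in \partial J(u_\lambda) \, \text{.}
\end{align*}

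Finally, I would simply observe that for the source element $v_\lambda = J(u_\lambda) \ip{A^* f}{u_\lambda} f \in \hc$ one has, by linearity of $A^*$, the identity
\begin{align*}
A^* v_\lambda = J(u_\lambda) \ip{A^* f}{u_\lambda} \, A^* f \, \text{,}
\end{align*}
so that the reformulated singular vector condition above is literally $A^* v_\lambda \in \partial J(u_\lambda)$, which is the source condition \eqref{eq:sc}. The implications go in both directions because each step is an equality, which completes the equivalence. There is no real obstacle here beyond being careful with the identification of $K^*$ when the codomain of $K$ is the one-dimensional Hilbert space $\R$; once that is handled, the statement reduces to a direct substitution.
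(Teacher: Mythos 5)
Your proposal is correct and follows essentially the same route as the paper: identify the adjoint $K^* r = r\, A^* f$, observe that the normalisation $\left|\ip{A^*f}{u_\lambda}\right| = 1$ together with \eqref{eq:gensingval} forces $\lambda = J(u_\lambda)$, and substitute to see that the singular vector condition is literally $A^* v_\lambda \in \partial J(u_\lambda)$. Your write-up is slightly more explicit about the one-dimensional codomain and the linearity of $A^*$, but the argument is the same.
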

\begin{proof}
With the adjoint operator $K^*$ of $K$ being given as
\begin{align*}
K^* c = c A^* f \, \text{,}
\end{align*}
for all $c \in \R$, the singular vector condition \eqref{eq:gensingvec} reads as
\begin{align*}
\lambda \langle A^* f, \ul \rangle A^* f \in \partial J(\ul) \, \text{,}
\end{align*}
for $| \langle A^* f, \ul \rangle | = 1$. Due to \eqref{eq:gensingval} we further conclude $\lambda = J(\ul)$, which verifies the proposition.
\end{proof}

\begin{remark}
Note that we either have $\langle A^* f, \ul \rangle = 1$ or $\langle A^* f, \ul \rangle = -1$, due to the normalisation constraint $| \langle A^* f, \ul \rangle | = 1$. Without loss of generality we assume $\langle A^* f, \ul \rangle = 1$; in this case, the element $v_\lambda$ in the source condition simplifies to $v_\lambda = J(\ul) f$. \\
\end{remark}

We summarise the assumptions that will be used throughout this work:
\begin{assumption}[Setup] 
\label{assumption: overall}
\quad
\begin{itemize}
\item $\uc$ is a Banach space.
\item $\hc$ is a Hilbert space.
\item $K\in\lc(\uc,\hc)$.
\item $J:\text{dom}(J)\subseteq\uc\to\R\cup\{+\infty\}$ is a proper, lower semi-continuous, convex, and absolutely one-homogeneous functional.
\item $\ker(K) \cap \ker(J) = \{ 0 \}$.
\item $f\in\hc$ and $\ip{K^*f}{v} = 0$ for all $v\in\ker(J)$. Then we can assume $u(0) = u^0 = 0$ for the initial value of the ISS flow.
\end{itemize}
\end{assumption}

\section{Decomposition of Generalised Singular Vectors}\label{sec:decomp}
Throughout this section we use Assumption \ref{assumption: overall}. In addition we assume that the input data $f$ for the inverse scale space flow (\ref{eq: ISSflow}) is represented by a linear combination of generalised singular vectors, i.e. $f = \sum_{j=1}^{n} \gamma_jKu_{\lambda_j}$ where $\{u_{\lambda_j}\}_{j=1}^n$ is a finite set of generalised singular vectors for $J$ and $K$ and $\gamma_j\in\R$, $j=1,\dots,n$. The main goal is to investigate if and under which conditions the ISS flow will give a perfect decomposition into the singular vectors representing $f$, i.e. when do we obtain
\begin{align}
u(t) = \left\{
        \begin{array}{ll}
            0, & \quad 0\leq t < t_1, \\[0.1cm]
            \sum_{j=1}^{k} \gamma_j u_{\lambda_j}, & \quad t_k \leq t < t_{k+1} \,\text{ for }\, k=1,...,n-1, \\[0.2cm]
            \sum_{j=1}^{n} \gamma_j u_{\lambda_j}, & \quad t_n \leq t,
        \end{array}
    \right.
\label{eq: DecompSol}
\end{align}
for fixed $t_k > 0$, $k=1,...,n$. We will introduce two conditions that will be imposed on the singular vectors in order for the ISS flow to behave this way. In addition we discuss the possibility of singular vector fusion, i.e. the possibility that two or more singular vectors can add up to another singular vector, which would make a decomposition impossible due to Theorem \ref{thm:ReconOfOneSingVec}. Our results will be compared to the achievements of \cite{gilboa2015semi} which deals with signal denoising and data represented by two singular vectors. 

\subsection{Orthogonality Condition}
We give a sufficient orthogonality condition on the generalised singular vectors $\{u_{\lambda_j}\}_{j=1}^n$ representing the data $f = \sum_{j=1}^{n} \gamma_jKu_{\lambda_j}$, for the ISS flow to satisfy the necessary orthogonality condition (\ref{eq: general_orthogonality_cond}). \\

Let the data for the inverse scale space flow be given as $f = \sum_{j=1}^{n} \gamma_j K u_{\lambda_j}$ where $\{u_{\lambda_j}\}_{j=1}^n$ is a set of generalised singular vectors. Assume that the inverse scale space flow gives a decomposition into the singular vectors of $f$ with no loss of contrast, i.e. the solution of the ISS flow is given by (\ref{eq: DecompSol}). The necessary orthogonality condition (\ref{eq: general_orthogonality_cond}) then states that
\begin{align*}
0 &= \ip{K^*\left(\sum_{j=1}^{n}\gamma_j K u_{\lambda_j} - \sum_{j=1}^{k} \gamma_j K u_{\lambda_j}\right)}{\sum_{j=1}^{k} \gamma_j u_{\lambda_j}} \\
&= \ip{\sum_{j=k+1}^{n} \gamma_j K u_{\lambda_j}}{\sum_{j=1}^{k} \gamma_j Ku_{\lambda_j}}
\end{align*}
for $k = 1,2,\dots,n-1$. A sufficient condition for this to hold is $K$-orthogonality between the singular vectors:
\begin{align*}
\ip{Ku_{\lambda_j}}{Ku_{\lambda_k}} = 0,\; j\neq k. \tag{\textbf{OC}}
\label{eq: orthogonality_cond}
\end{align*}
This orthogonality condition will be used throughout the entire paper.

\subsection{\eqref{eq: subdiff_cond} Condition}
$K$-orthogonality between the singular vectors representing $f$ will not be enough to guarantee that the solution of the ISS flow is given by (\ref{eq: DecompSol}). We therefore introduce an additional condition on the subgradients of the singular vectors that we name the (\ref{eq: subdiff_cond}) condition. It is a condition on the sum of the subgradients of a set of singular vectors which turns out to be a kind of linearity condition for $\partial J$ in the singular vectors. The (\ref{eq: subdiff_cond}) condition furthermore implies linearity of $J$ in the singular vectors. \\
\begin{definition}[\ref{eq: subdiff_cond} Condition]
\label{def: subdiff_cond}
Let $\{u_{\lambda_j}\}_{j=1}^{n}$ be a set of singular vectors of $J$ with corresponding singular values $\{\lambda_j\}_{j=1}^{n}$. We say that $\{u_{\lambda_j}\}_{j=1}^{n}$ satisfy the \textit{(\ref{eq: subdiff_cond}) condition} if
\begin{align}
\sum_{j=1}^{k} \lambda_jK^*Ku_{\lambda_j} \in \partial J(0),\;\forall\,k\in\{1;n\},\tag{\textbf{SUB0}}
\label{eq: subdiff_cond}
\end{align}
where $k\in\{1;n\}$ means $k=1,2,\dots,n$.
\end{definition}

\begin{remark}
In the definition of the (\ref{eq: subdiff_cond}) condition we use the notation $k\in\{1;n\}$ which means $k=1,2,\dots,n$. We will use this notation throughout the paper. We want to emphasise that in order for a set of singular vectors to satisfy the (\ref{eq: subdiff_cond}) condition it is necessary that the inclusion is satisfied for all $k\in\{1;n\}$. It is not enough that the full sum over the subgradients is included in the subdifferential at zero. We will call the inclusions for $k\in\{1;n-1\}$ the \emph{partial sum conditions} of the (\ref{eq: subdiff_cond}) condition. \\
\end{remark}

The (\ref{eq: subdiff_cond}) condition is equivalent to a linearity condition for the subdifferential for $K$-orthogonal singular vectors:
\begin{proposition} 
\label{prop: subdifferential_linearity_equivalences}
Let $\{u_{\lambda_j}\}_{j=1}^{n}$ be a set of $K$-normalised singular vectors of $J$ with corresponding singular values $\{\lambda_j\}_{j=1}^{n}$. Assume that they satisfy the orthogonality condition (\ref{eq: orthogonality_cond}). Then the (\ref{eq: subdiff_cond}) condition is satisfied if and only if
\begin{align*}
\sum_{j=1}^{k} \lambda_jK^*Ku_{\lambda_j} \in \partial J\left(\sum_{j=1}^{k} c_ju_{\lambda_j}\right),\; c_j\geq 0, \; \forall\,k\in\{1;n\}.
\end{align*}
\end{proposition}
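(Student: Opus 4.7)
My plan is to use the characterisation \eqref{eq: subdiff_characterization} of the subdifferential, which says that for absolutely one-homogeneous $J$, a functional $p$ lies in $\partial J(u)$ if and only if $\ip{p}{u}=J(u)$ and $\ip{p}{v}\leq J(v)$ for every $v\in\dom(J)$. With $p:=\sum_{j=1}^{k}\lambda_j K^*Ku_{\lambda_j}$ and $u:=\sum_{j=1}^{k}c_j u_{\lambda_j}$ in mind, this reduces the proposition to a supporting inequality (which the \eqref{eq: subdiff_cond} condition encodes in a very strong form) and an equality between the dual pairing and $J(u)$.

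For the implication ``$\Leftarrow$'' the argument is immediate: the assumed membership holds for all non-negative coefficients, so choosing $c_1=\cdots=c_k=0$ collapses the right-hand side to $\partial J(0)$ and recovers \eqref{eq: subdiff_cond}.

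For the implication ``$\Rightarrow$'' I would first observe that the inequality $\ip{p}{v}\leq J(v)$ for all $v\in\dom(J)$ is nothing but the defining property of $p\in\partial J(0)$ and is therefore supplied verbatim by \eqref{eq: subdiff_cond}. Only the equality $\ip{p}{u}=J(u)$ remains. Expanding the dual pairing,
\begin{align*}
\ip{p}{u}=\sum_{i,j=1}^{k}\lambda_j c_i\ip{Ku_{\lambda_j}}{Ku_{\lambda_i}}_{\hc},
\end{align*}
the $K$-orthogonality \eqref{eq: orthogonality_cond} kills all off-diagonal terms and the $K$-normalisation $\|Ku_{\lambda_j}\|_{\hc}=1$ leaves $\ip{p}{u}=\sum_{j=1}^{k}\lambda_j c_j$. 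Since each $u_{\lambda_j}$ is a $K$-normalised singular vector, $\lambda_j=J(u_{\lambda_j})$, so this equals $\sum_{j=1}^{k}c_j J(u_{\lambda_j})$.

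To finish I need $J(u)=\sum_{j=1}^{k}c_j J(u_{\lambda_j})$. The upper bound follows from the triangle inequality \eqref{eq: TriangleIneqForJ} together with the absolute one-homogeneity \eqref{eq:absonehom} and the assumption $c_j\geq 0$. The matching lower bound is obtained by applying the inequality $\ip{p}{v}\leq J(v)$ (which again comes from \eqref{eq: subdiff_cond}) with $v=u$, giving $\sum_{j=1}^{k}c_j\lambda_j=\ip{p}{u}\leq J(u)$. Hence $J(u)=\ip{p}{u}$ and $p\in\partial J(u)$ as required. There is no real obstacle; the only point to be careful about is that the non-negativity of the $c_j$'s is used both to invoke the triangle inequality with $J(c_j u_{\lambda_j})=c_j J(u_{\lambda_j})$ and to ensure the construction makes sense for every allowed choice, and that the full \eqref{eq: subdiff_cond} (for every partial sum, not merely $k=n$) is needed because the proposition's conclusion is required for every $k\in\{1;n\}$.
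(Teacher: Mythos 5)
Your proof is correct and uses essentially the same ingredients as the paper's: the ``$\Leftarrow$'' direction via specialising the coefficients (the paper instead notes $\partial J(u)\subseteq\partial J(0)$, an equivalent one-liner), and the ``$\Rightarrow$'' direction via \eqref{eq: subdiff_cond}, the triangle inequality with one-homogeneity, and \eqref{eq: orthogonality_cond}. The only cosmetic difference is that you verify the two conditions of the characterisation \eqref{eq: subdiff_characterization} separately --- thereby establishing the linearity $J\bigl(\sum_j c_j u_{\lambda_j}\bigr)=\sum_j c_j J(u_{\lambda_j})$ en route, which the paper defers to Proposition \ref{prop: J_linearity} --- whereas the paper chains the same estimates into a single verification of the defining inequality $J(v)-J(u)-\ip{p}{v-u}\geq 0$.
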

\begin{proof}
Proof of "$\Leftarrow$": This is straightforward using the subdifferential characterisation (\ref{eq: subdiff_characterization}). \\

Proof of "$\Rightarrow$": Using the (\ref{eq: subdiff_cond}) condition, the triangle inequality (\ref{eq: TriangleIneqForJ}) for $J$ and the orthogonality condition (\ref{eq: orthogonality_cond}) we get
\begin{align*}
J(v) & - J\left(\sum_{j=1}^{k} c_ju_{\lambda_j}\right) - \ip{\sum_{j=1}^{k} \lambda_jK^*Ku_{\lambda_j}}{v-\sum_{j=1}^{k} c_j u_{\lambda_j}} \\
&\geq J(v) - \sum_{j=1}^{k} c_j J(u_{\lambda_j}) - J(v) + \sum_{j=1}^{k} c_j J(u_{\lambda_j}) \\
&= 0
\end{align*}
for $c_j\geq 0$, $j = 1,...,k$, for all $v\in\dom(J)$. This shows that $\sum_{j=1}^{k} \lambda_jK^*K u_{\lambda_j} \in\partial J\left(\sum_{j=1}^{k} c_j u_{\lambda_j}\right)$ for all $k\in\{1;n\}$.
\end{proof}
We would like to make a comparison to the (LIS) condition in \cite[Definition 7]{gilboa2015semi}. LIS stands for linearity in the subdifferential. The (LIS) condition is in principle the same as the (\ref{eq: subdiff_cond}) condition through Proposition \ref{prop: subdifferential_linearity_equivalences}, except that the (LIS) condition is only defined for a pair of two singular vectors and additionally requires the linearity in the subdifferential to hold for negative coefficients $c_j<0$ as well. (LIS) turns out to imply directly orthogonality between the singular vectors, see \cite[Proposition 4]{gilboa2015semi}. \\ 

The (\ref{eq: subdiff_cond}) condition furthermore implies some kind of linearity of $J$ for $K$-orthogonal singular vectors:
\begin{proposition} 
\label{prop: J_linearity}
Let $\{u_{\lambda_j}\}_{j=1}^{n}$ be a set of $K$-normalised singular vectors of $J$ with corresponding singular values $\{\lambda_j\}_{j=1}^{n}$. Assume that they satisfy the orthogonality condition (\ref{eq: orthogonality_cond}) and the (\ref{eq: subdiff_cond})  condition. Then
\begin{align*}
J\left(\sum_{j=1}^{n} c_ju_{\lambda_j}\right) = \sum_{j=1}^{n} c_jJ(u_{\lambda_j})
\end{align*}
for $c_j\geq 0$, $j = 1,...,n$.
\end{proposition}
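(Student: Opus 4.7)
The plan is to reduce the claim to a single dual-pairing computation by applying the linearity-in-the-subdifferential statement of Proposition \ref{prop: subdifferential_linearity_equivalences} at the top level $k=n$. Since the hypotheses of that proposition (the orthogonality condition (\ref{eq: orthogonality_cond}), $K$-normalisation, and (\ref{eq: subdiff_cond})) are exactly what is assumed here, we immediately obtain
\begin{align*}
\sum_{j=1}^{n} \lambda_j K^*Ku_{\lambda_j} \in \partial J\!\left(\sum_{j=1}^{n} c_j u_{\lambda_j}\right)
\end{align*}
for any $c_j\geq 0$. Then, by the characterisation (\ref{eq: subdiff_characterization}) of the subdifferential of an absolutely one-homogeneous functional, taking the dual pairing of this subgradient with the point $\sum_{i=1}^{n} c_i u_{\lambda_i}$ itself recovers $J\!\left(\sum_{i=1}^{n} c_i u_{\lambda_i}\right)$.

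The only remaining step is to evaluate that pairing explicitly. Expanding it bilinearly gives $\sum_{i,j} c_i \lambda_j \langle Ku_{\lambda_j}, Ku_{\lambda_i}\rangle_{\hc}$, and then (\ref{eq: orthogonality_cond}) together with $K$-normalisation $\|Ku_{\lambda_j}\|_{\hc}=1$ collapses the double sum to $\sum_{j=1}^{n} c_j \lambda_j$. Finally, applying the same characterisation (\ref{eq: subdiff_characterization}) to each $u_{\lambda_j}$ separately (using $\lambda_j K^*Ku_{\lambda_j}\in\partial J(u_{\lambda_j})$) together with $\|Ku_{\lambda_j}\|_{\hc}=1$ yields $\lambda_j = \langle \lambda_j K^*Ku_{\lambda_j}, u_{\lambda_j}\rangle = J(u_{\lambda_j})$, so that $\sum_{j=1}^{n} c_j \lambda_j = \sum_{j=1}^{n} c_j J(u_{\lambda_j})$, which is the desired identity.

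There is no real obstacle here since the substantive content — the partial-sum subgradient inclusions — has already been absorbed into Proposition \ref{prop: subdifferential_linearity_equivalences}; the present statement is essentially a dual-pairing evaluation. As a sanity check, the triangle inequality (\ref{eq: TriangleIneqForJ}) combined with $J(c_j u_{\lambda_j}) = c_j J(u_{\lambda_j})$ (valid for $c_j\geq 0$ by absolute one-homogeneity) gives the ``$\leq$'' direction independently, while the pairing identity above gives the ``$\geq$'' direction; this also explains why the non-negativity of the coefficients is essential, since the triangle inequality is generally strict for signed combinations.
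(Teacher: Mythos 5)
Your proof is correct and follows essentially the same route as the paper: invoke Proposition \ref{prop: subdifferential_linearity_equivalences} at $k=n$, pair the resulting subgradient with $\sum_j c_j u_{\lambda_j}$ via the characterisation (\ref{eq: subdiff_characterization}), and collapse the double sum using (\ref{eq: orthogonality_cond}), $K$-normalisation, and $\lambda_j = J(u_{\lambda_j})$. The paper compresses this into a single displayed identity; your version merely spells out the intermediate steps.
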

\begin{proof}
Using Proposition \ref{prop: subdifferential_linearity_equivalences}, the properties of the subdifferential for absolutely one-homogeneous functionals, and (\ref{eq: orthogonality_cond}) we get
\begin{align*}
J\left( \sum_{j=1}^{n} c_ju_{\lambda_j} \right) = \ip{\sum_{j=1}^{n} \lambda_jK^*Ku_{\lambda_j}}{\sum_{j=1}^{n} c_j u_{\lambda_j}} = \sum_{j=1}^{n} c_j J(u_{\lambda_j}).
\end{align*}
\end{proof}
\begin{remark}
Note that the linearity in $J$ is only satisfied for non-negative coefficients $c_j$ and the proposition actually holds even if (\ref{eq: subdiff_cond}) is only satisfied for $k=n$.\\
\end{remark}
\begin{remark}
Linearity of $J$ does not imply the (\ref{eq: subdiff_cond}) condition under (\ref{eq: orthogonality_cond}), see Example \ref{example: l1_convolution} for a counter example. \\
\end{remark}

In order to give an intuitive idea of what the orthogonality condition (\ref{eq: orthogonality_cond}) and the (\ref{eq: subdiff_cond}) condition require of the singular vectors, we give three examples:

\begin{example}
Let us again consider $K = I : \ell^1(\R^m) \to \ell^2(\R^m)$ and $J(u) = \| Wu \|_{\ell^1(\R^m)}$ as in Example \ref{exm:dictionaryexm}. In order for two singular vectors $u_{\lambda_1}$ and $u_{\lambda_2}$ to be orthogonal we need
\begin{align*}
0 = \ip{u_{\lambda_1}}{u_{\lambda_2}} = \ip{u_{\lambda_1}}{W^*Wu_{\lambda_2}} = \ip{Wu_{\lambda_1}}{Wu_{\lambda_2}}.
\end{align*}
Hence we can just check that $Wu_{\lambda_1}$ and $Wu_{\lambda_2}$ are orthogonal in $\ell^2(\R^m)$. For the (\ref{eq: subdiff_cond}) condition we have
\begin{align*}
\lambda_1u_{\lambda_1} + \lambda_2u_{\lambda_2} &= \lambda_1W^*Wu_{\lambda_1} + \lambda_2W^*Wu_{\lambda_2} \\
&= W^*(\text{sign}(Wu_{\lambda_1})+\text{sign}(Wu_{\lambda_2})).
\end{align*}
If $Wu_{\lambda_1}$ and $Wu_{\lambda_2}$ have non-overlapping supports then
\begin{align*}
\lambda_1u_{\lambda_1} + \lambda_2u_{\lambda_2} = W^*(\text{sign}(Wu_{\lambda_1}+Wu_{\lambda_2}))\in\partial J(0)
\end{align*}
and the (\ref{eq: subdiff_cond}) condition is satisfied. Note that in this case also the orthogonality condition (\ref{eq: orthogonality_cond}) is satisfied.
\end{example}
\begin{example}
\label{example: l1_convolution}
Let $J(u) = \norm{u}_{\ell^1(\R^m)}$ with $\partial J(u) = \text{sign}(u)$, see Example \ref{exm:dictionaryexm}. We look at a convolution problem where $K:\ell^1(\R^m)\supseteq\ell^2(\R^m)\to\ell^2(\R^m)$ is the convolution operator defined by
\begin{align*}
(Ku)_k = \sum_{j=-1}^{1} u_{k-j}g_j, \quad g = \frac{1}{\sqrt{2}}\left( \begin{array}{c}
1 \\
1 \\
0 \end{array} \right).
\end{align*}
The adjoint operator $K^*$ is determined by
\begin{align*}
(K^*v)_j = \sum_{k=-1}^{1} v_{j+k}g_k.
\end{align*}
Two examples of $K$-normalised singular vectors are
\begin{align*}
u_{\lambda_1} = (0,0,1,0,0)^T \qquad \text{and} \qquad u_{\lambda_2} = \frac{1}{\sqrt{2}}(0,1,0,-1,0)^T.
\end{align*}
We have
\begin{alignat*}{2}
Ku_{\lambda_1} & = (0,1/\sqrt{2},1/\sqrt{2},0,0)^T, &&\quad \norm{Ku_{\lambda_1}}_{\ell^1(\R^m)} = 1, \\ \quad Ku_{\lambda_2} & = (1/2,1/2,-1/2,-1/2,0)^T, &&\quad  \norm{Ku_{\lambda_2}}_{\ell^1(\R^m)} = 1,
\end{alignat*}
and
\begin{align*}
J(u_{\lambda_1})K^*Ku_{\lambda_1} &= (0,1/2,1,1/2,0)^T \in\text{sign}(u_{\lambda_1}), \\ J(u_{\lambda_2})K^*Ku_{\lambda_2} &= (1/2,1,0,-1,-1/2)^T \in\text{sign}(u_{\lambda_2}).
\end{align*}
The two singular vectors are $K$-orthogonal but they do not satisfy the (\ref{eq: subdiff_cond}) condition since
\begin{align*}
J(u_{\lambda_1})K^*Ku_{\lambda_1} + J(u_{\lambda_2})K^*Ku_{\lambda_2} = (1/2,\textcolor{red}{3/2},1,-1/2,-1/2)^T \notin \partial J(0)=[-1,1].
\end{align*}
This example also shows that Proposition \ref{prop: J_linearity} only holds one way since for $c_1,c_2\geq 0$ we have
\begin{align*}
J(c_1u_{\lambda_1}+c_2u_{\lambda_2}) &= \norm{(0,c_2/\sqrt{2},c_1,-c_2/\sqrt{2},0)^T}_{\ell^1(\R^m)} = c_1 + 2c_2/\sqrt{2} \\
&= c_1\cdot 1 + c_2\cdot 2/\sqrt{2} = c_1J(u_{\lambda_1}) + c_2J(u_{\lambda_2}).
\end{align*}
Hence the linearity of $J$ in the singular vectors does not imply that the (\ref{eq: subdiff_cond}) condition is satisfied. In Figure \ref{fig: l1conv_example} we see the two singular vectors and their corresponding subgradients. We see that the subgradients contain values between $-1$ and $1$ and are exactly $-1$ at the negative peaks and $1$ at the positive peaks. In Figure \ref{subfig: l1conv_example_3} we see the sum of the singular vectors that contains three peaks. The sum of the subgradients contains values above one and is therefore not in the subdifferential of $J$ at zero. The problem is that the two positive peaks of the singular vectors get too close, which makes the sum of the subgradients exceed one. \\

\begin{figure}[!h]
\begin{center}
\subfloat[First singular vector and subgradient.]{\includegraphics[width=0.49\textwidth]{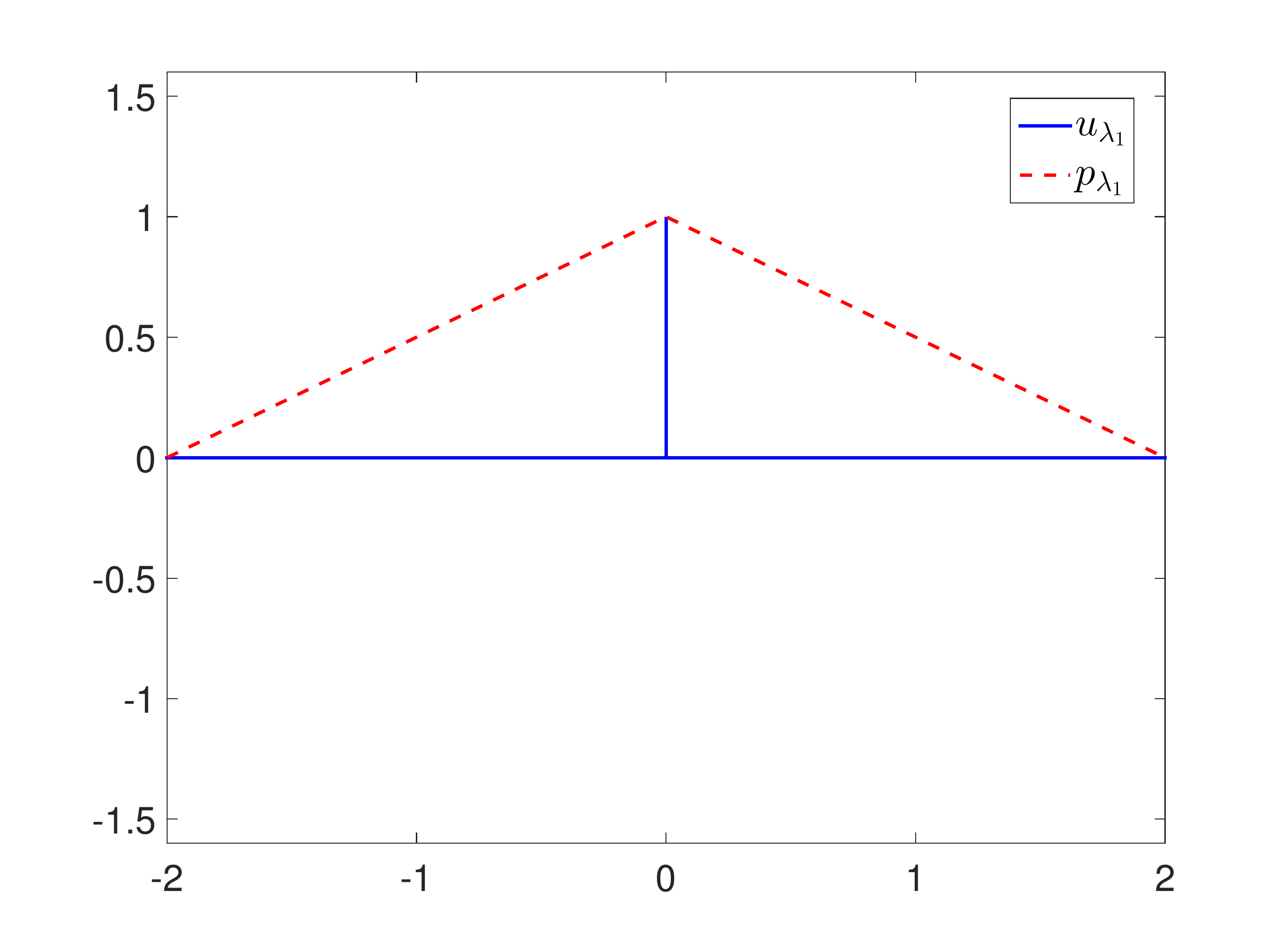}\label{subfig: l1conv_example_1}}\;
\subfloat[Second singular vector and subgradient.]{\includegraphics[width=0.49\textwidth]{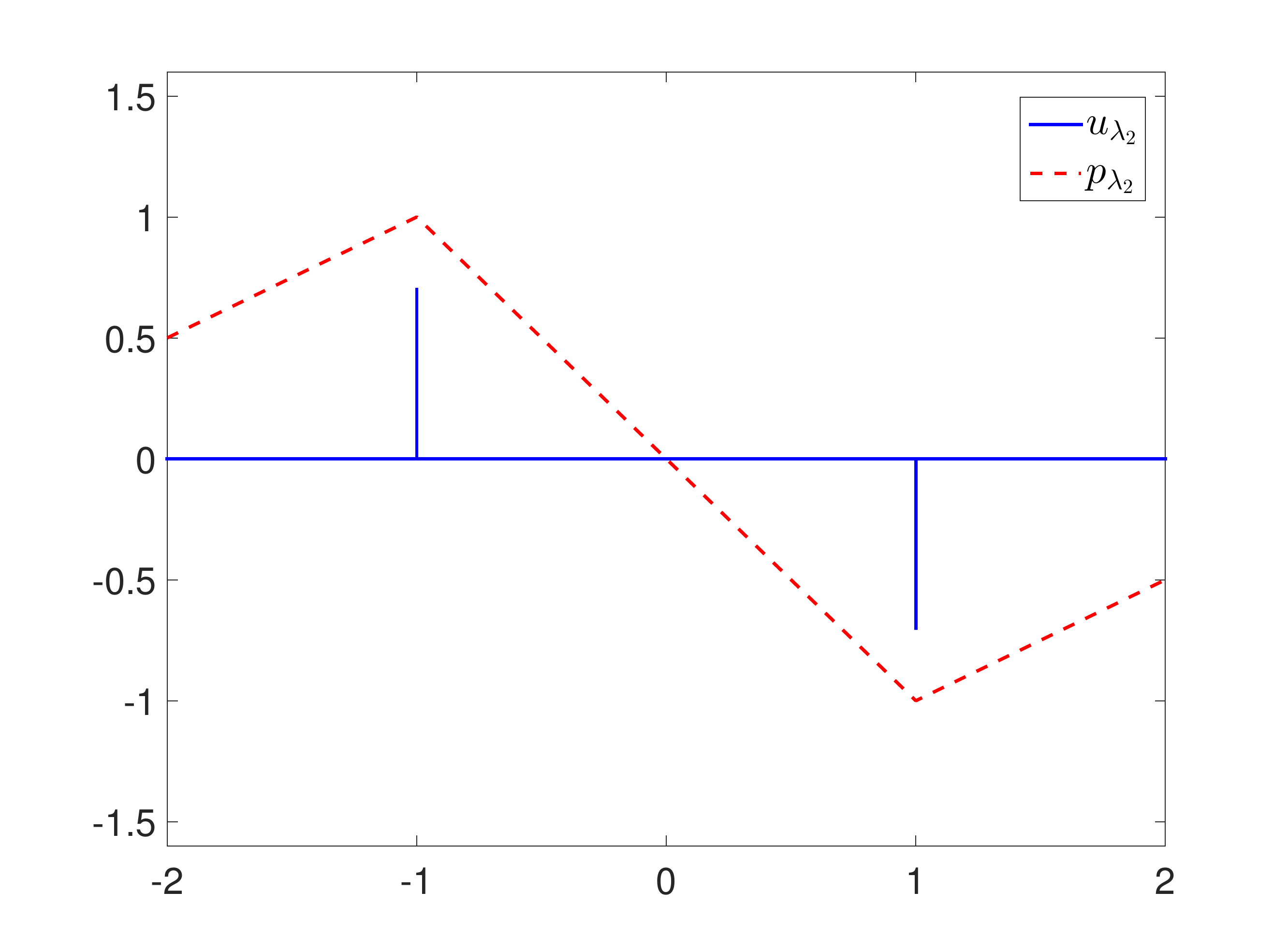}\label{subfig: l1conv_example_2}}\\
\subfloat[Sum of singular vectors and subradients.]{\includegraphics[width=0.49\textwidth]{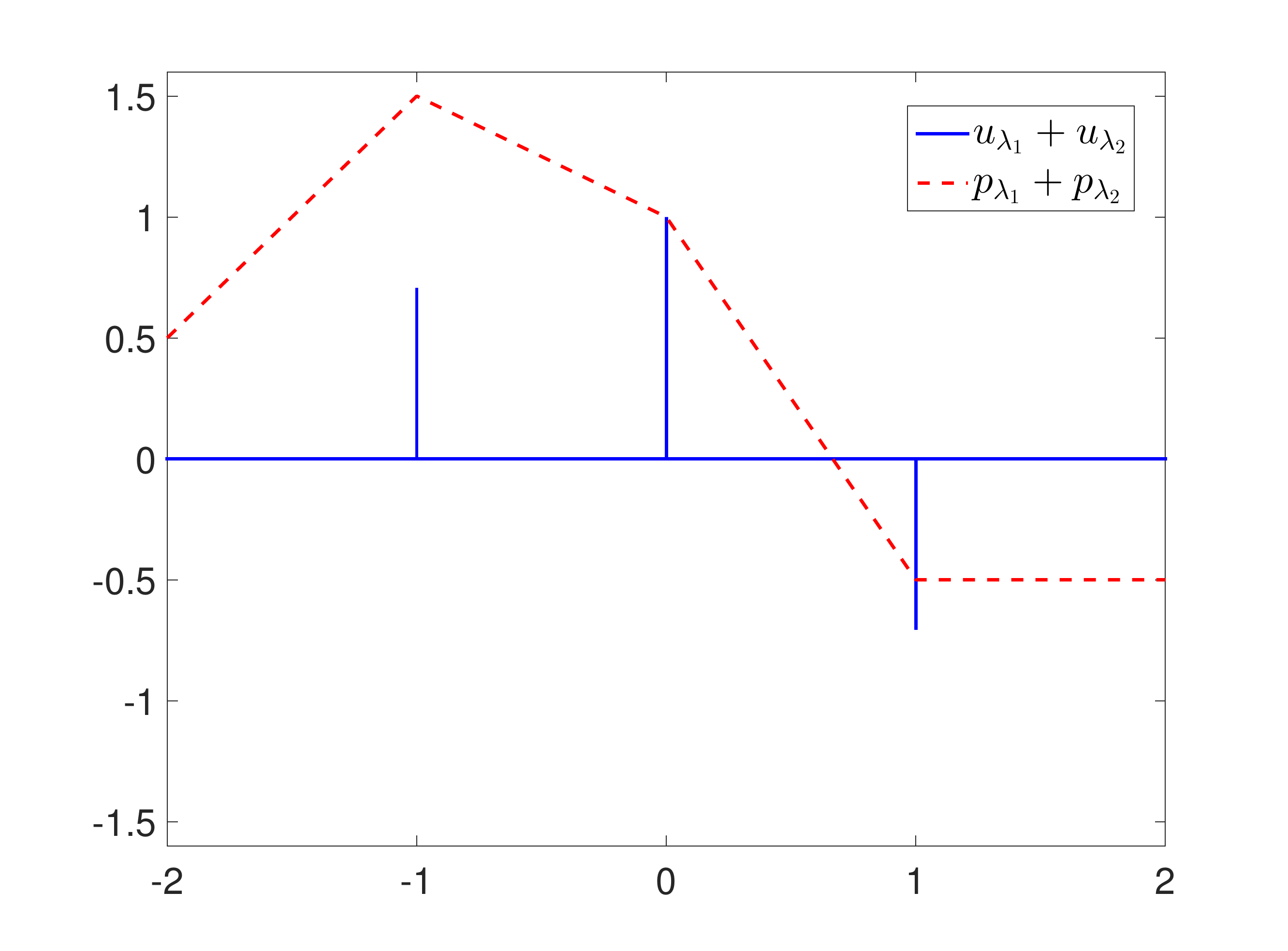}\label{subfig: l1conv_example_3}}\\
\end{center}
\vspace{0.5cm}
\caption{Convolution example. In Figures \ref{subfig: l1conv_example_1} and \ref{subfig: l1conv_example_2} we see two $K$-orthogonal singular vectors. Their subgradients reach $1$, respectively $-1$, for positive, respectively negative, peaks. Between the peaks the value is in $[-1,1]$. In Figure \ref{subfig: l1conv_example_3} we see the sum of the two singular vectors and the sum of the subgradients. The (\ref{eq: subdiff_cond}) condition is not satisfied since the sum of the subgradients exceeds one. The problem is the two positive peaks of the singular vectors getting too close.}
\label{fig: l1conv_example}
\end{figure}

\end{example}
\begin{example}
We consider $K = I:BV([0,1])\to L^2([0,1])$, $J(u) = \tv_*(u)$,  and the two Haar wavelets $u_{\lambda_1}$ and $u_{\lambda_2}$ as defined in Example \ref{exm:tvsingvec}. By adding these two singular vectors we get
\begin{align*}
(u_{\lambda_1}+u_{\lambda_2})(x) = \left\{
        \begin{array}{ll}
            1+\sqrt{2}, & 0\leq x < 1/4,\\
            1-\sqrt{2}, & 1/4 \leq x < 1/2,\\
            -1, & 1/2 \leq x \leq 1, \\
            0, & \text{otherwise}.
        \end{array}
    \right.
\end{align*}
The total variation of the sum is then
\begin{align*}
\tv_*(u_{\lambda_1}+u_{\lambda_2}) &= \underset{\substack{\varphi\in C^{\infty}([0,1]),\\ \norm{\varphi}_{L^{\infty}([0,1])}\leq 1}}{\sup} \int_0^1 (u_{\lambda_1}+u_{\lambda_2})(x)\varphi'(x)\, dx, \\
&= \underset{\substack{\varphi\in C^{\infty}([0,1]),\\ \norm{\varphi}_{L^{\infty}([0,1])}\leq 1}}{\sup} \left\{(1+\sqrt{2})\int_{0}^{1/4} \varphi'(x)\, dx + (1-\sqrt{2})\int_{1/4}^{1/2} \varphi'(x)\, dx \right. \\
&\left. \qquad\qquad\qquad - \int_{1/2}^{1} \varphi'(x)\, dx \right\} \\
&= \underset{\substack{\varphi\in C^{\infty}([0,1]),\\ \norm{\varphi}_{L^{\infty}([0,1])}\leq 1}}{\sup} \left\{ -(1+\sqrt{2})\varphi(0) + 2\sqrt{2}\varphi(1/4) + (2-\sqrt{2})\varphi(1/2) \right. \\
&\left. \qquad\qquad\qquad - \varphi(1) \vphantom{\sqrt{2}} \right\} \\
&\leq 1+\sqrt{2}+2\sqrt{2} +2-\sqrt{2} +1 = 4 + 2\sqrt{2} \\
&= \tv_*(u_{\lambda_1}) + \frac{1}{2} \tv_*(u_{\lambda_2}) \\
&< \tv_*(u_{\lambda_1}) + \tv_*(u_{\lambda_2}).
\end{align*}
By Proposition \ref{prop: J_linearity}, $u_{\lambda_1}$ and $u_{\lambda_2}$ do not satisfy the (\ref{eq: subdiff_cond}) condition. In Figure \ref{subfig: TV_example_1} we see the two Haar wavelets plotted. In Figure \ref{subfig: TV_example_2} we see the sum of the two Haar wavelets. At $x = 1/2$ we have a second jump downwards. This is at the place where the two Haar wavelets have overlapping jumps in opposite directions. That is exactly what makes the $\tv_*$-functional non-linear in the singular vectors. \\

\begin{figure}
\begin{center}
\subfloat[Two Haar wavelets.]{\includegraphics[width=0.49\textwidth]{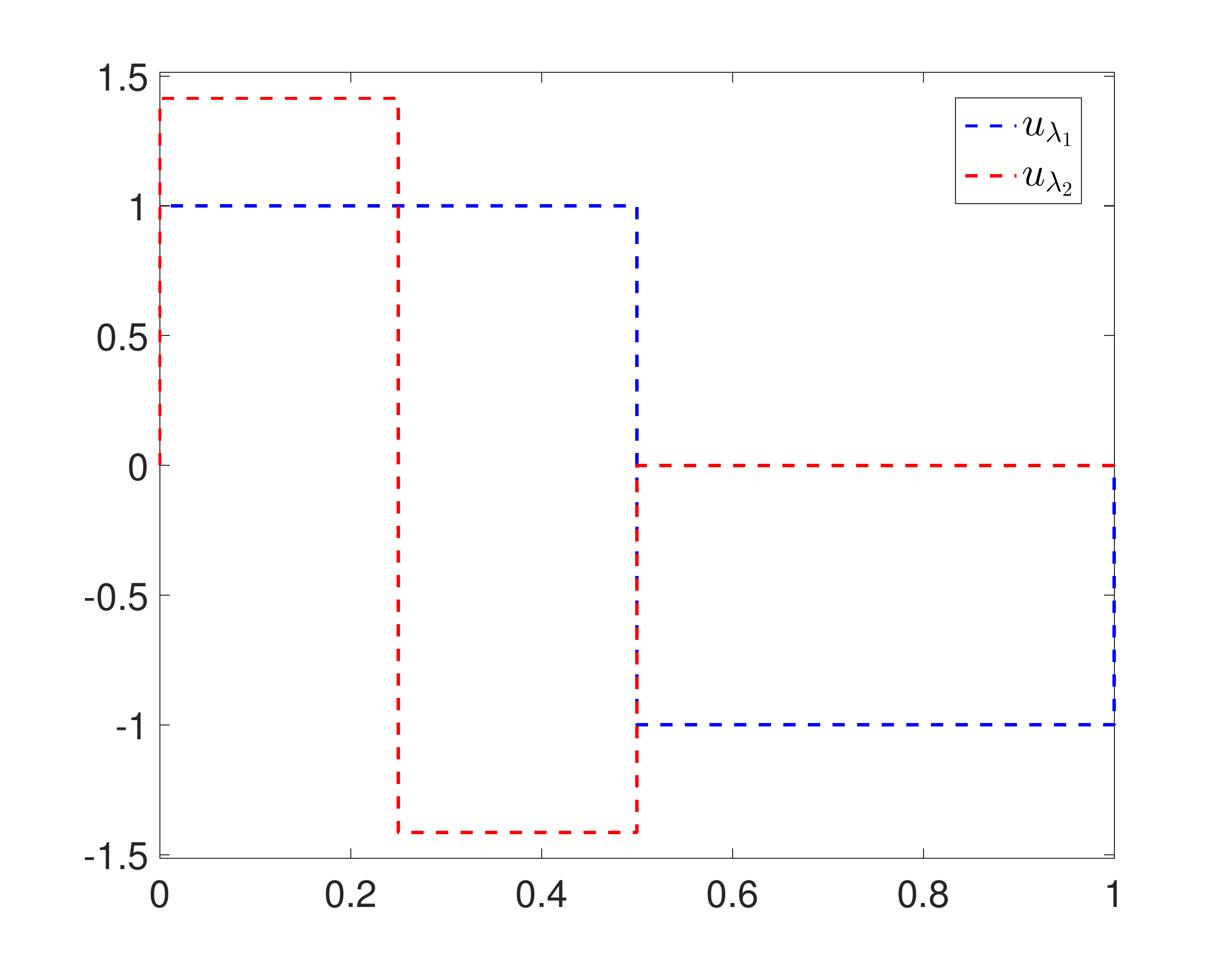}\label{subfig: TV_example_1}}
\subfloat[Sum of the two Haar wavelets.]{\includegraphics[width=0.49\textwidth]{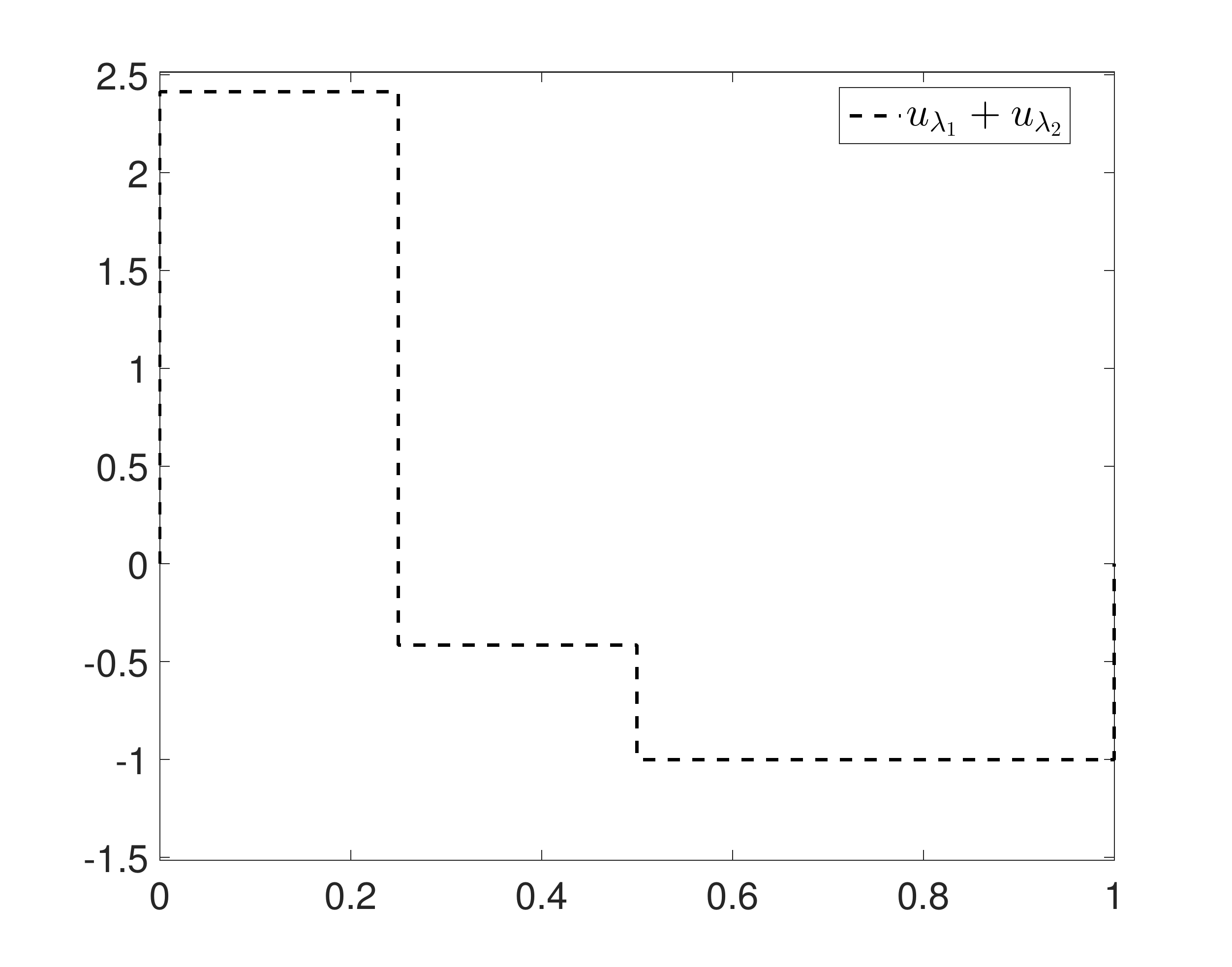}\label{subfig: TV_example_2}}
\end{center}
\vspace{0.5cm}
\caption{$\tv_*$-denoising example. In Figure \ref{subfig: TV_example_1} we see two Haar wavelets which are orthogonal singular vectors for the $\tv_*$-functional and the embedding operator $I: BV([0,1])\to L^2([0,1])$. The singular vectors do not satisfy the (\ref{eq: subdiff_cond}) condition by Proposition \ref{prop: J_linearity}, since $\tv_*(u_{\lambda_1}+u_{\lambda_2}) < \tv_*(u_{\lambda_1}) + \tv_*(u_{\lambda_2})$.}
\label{fig: TV_example}
\end{figure}
\end{example}

In the following we are going to provide an alternative \textit{dual norm formulation} of the (\ref{eq: subdiff_cond}) condition under a norm-inequality assumption on $J$.
\begin{definition}[Norm-inequality Assumption] 
\label{definition: NormIneqAssumption}
$J$ satisfies the \textit{norm-inequality assumption} if there exists a positive constant $c_0>0$ such that
\begin{align}
c_0\norm{u}_{\uc} \leq J(u),\quad\forall u\in\ker(J)^{\perp}.
\label{eq: NormIneqAssumption}
\end{align}
\end{definition}
Under the norm-inequality assumption we are able to show a relation between the subdifferential at zero and the dual norm:
\begin{proposition} 
\label{prop: DualNormFormOfSubdiff}
Let $J$ be absolutely one-homogeneous and satisfy the norm-inequality assumption (\ref{eq: NormIneqAssumption}). Then
\begin{align*}
\norm{\omega^*}_{\uc^*} \leq c_0 \text{ and } \ip{\omega^*}{v} = 0,\; \forall v\in\ker(J) \quad \Rightarrow \quad \omega^*\in\partial J(0).
\end{align*}
\end{proposition}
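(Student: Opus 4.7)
The plan is to verify the characterisation of the subdifferential at zero given in (\ref{eq: subdiff_characterization}), i.e.\ to show $\ip{\omega^*}{u} \leq J(u)$ for every $u\in\dom(J)$. I would split the check into two easy cases and then reduce the general case to these via the kernel invariance (\ref{eq: J_kernelinvariance}). The first easy case is $u\in\ker(J)$: the hypothesis $\ip{\omega^*}{v}=0$ for $v\in\ker(J)$ immediately gives $\ip{\omega^*}{u}=0=J(u)$.

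The second easy case is $u\in\ker(J)^\perp$. There I would just chain the definition of the dual norm with both assumptions on $\omega^*$ and the norm-inequality (\ref{eq: NormIneqAssumption}):
\[
\ip{\omega^*}{u}\leq|\ip{\omega^*}{u}|\leq\|\omega^*\|_{\uc^*}\,\|u\|_{\uc}\leq c_0\|u\|_{\uc}\leq J(u).
\]
For a general $u\in\dom(J)$ I would write $u=u_0+u_\perp$ with $u_0\in\ker(J)$ and $u_\perp\in\ker(J)^\perp$. Using that $\omega^*$ annihilates the kernel gives $\ip{\omega^*}{u}=\ip{\omega^*}{u_\perp}$, while the kernel invariance (\ref{eq: J_kernelinvariance}) gives $J(u)=J(u_\perp)$, so the inequality for $u$ follows from the one just established for $u_\perp$.

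The main obstacle is justifying this decomposition, because $\ker(J)^\perp$ as defined in Section~\ref{sec: Preliminaries} is not a topological complement of $\ker(J)$ in $\uc$ but is defined through $K$-orthogonality in $\hc$. Constructing $u_0$ amounts to finding $u_0\in\ker(J)$ with $\ip{K(u-u_0)}{Kv}=0$ for every $v\in\ker(J)$, i.e.\ solving a Riesz representation problem on $\ker(J)$ equipped with the inner product $(u_0,v)\mapsto\ip{Ku_0}{Kv}$, which is indeed an inner product by the condition $\ker(K)\cap\ker(J)=\{0\}$ in Assumption~\ref{assumption: overall}. I expect the cleanest argument to appeal to an orthogonal projection in this induced geometry (or equivalently to closedness of $K(\ker(J))$ in $\hc$) to produce $u_0$; once that is in hand, the three steps above combine to give $\omega^*\in\partial J(0)$.
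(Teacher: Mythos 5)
Your proof follows essentially the same route as the paper's: exclude $u\in\ker(J)$ using the annihilation hypothesis, handle $u\in\ker(J)^\perp$ via the dual-norm estimate combined with (\ref{eq: NormIneqAssumption}), and reduce the general case through the decomposition $u=u_0+u_\perp$ together with the kernel invariance (\ref{eq: J_kernelinvariance}). The only difference is that you explicitly flag the need to justify the existence of that decomposition, which the paper's proof takes for granted; your remark about the Riesz representation in the $K$-induced inner product on $\ker(J)$ is a sensible way to close that minor gap.
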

\begin{proof}
Let $\norm{\omega^*}_{\uc^*} \leq c_0$ and $\ip{\omega^*}{v}=0$ for all $v\in\ker(J)$. We need to show $\ip{\omega^*}{\omega} \leq J(\omega)$ for all $\omega\in\uc$. We can already exclude all $\omega\in\ker(J)$ since in this case we have $\ip{\omega^*}{\omega} = 0$ by assumption and $J(\omega) = 0$. For $\omega\in\ker(J)^{\perp}$ we get
\begin{align*}
\ip{\omega^*}{\omega} \leq \norm{\omega^*}_{\uc^*}\norm{\omega}_{\uc} \leq c_0\norm{\omega}_{\uc} \leq J(\omega)
\end{align*}
by (\ref{eq: NormIneqAssumption}). Hence for $\omega = \omega_1 + \omega_2$ with $\omega_1\in\ker(J)$ and $\omega_2\in\ker(J)^{\perp}$ we have
\begin{align*}
\ip{\omega^*}{\omega} = \ip{\omega^*}{\omega_2} \leq J(\omega_2) = J(\omega),
\end{align*}
where the last equality follows from (\ref{eq: J_kernelinvariance}).
\end{proof}
Now we see that if
\begin{align}
\norm{\sum_{j=1}^{k} \lambda_jK^*Ku_{\lambda_j}}_{\uc^*} \leq c_0
\label{eq: DualNormFormOfSUB0}
\end{align}
for $k\in\{1;n\}$ then the (\ref{eq: subdiff_cond}) condition is satisfied, since 
\begin{align*}
\ip{\sum_{j=1}^{k} \lambda_jK^*Ku_{\lambda_j}}{v} = 0, \quad \forall v\in\ker(J)
\end{align*}
is already satisfied by (\ref{eq: SubgradientPerpToKernelJ}) and $\lambda_jK^*Ku_{\lambda_j}\in\partial J(u_{\lambda_j})$ for $j=1,...,k$. Hence we can use (\ref{eq: DualNormFormOfSUB0}) as an alternative to the (\ref{eq: subdiff_cond}) condition for numerical implementations when the norm-inequality assumption is satisfied. \\

\begin{remark}
\label{remark: SubdiffEquivDualNorm}
Note that if $J(u) = \norm{u}_{\uc}$ then
\begin{align*}
\norm{\omega^*}_{\uc^*} \leq 1 \text{ and } \ip{\omega^*}{v} = 0,\;\forall v\in\ker(J) \quad\Rightarrow\quad \omega^*\in\partial J(0)
\end{align*}
by Proposition \ref{prop: DualNormFormOfSubdiff} with $c_0 = 1$ and for $\omega^*\in\partial J(0)$ we have
\begin{align*}
\norm{\omega^*}_{\uc^*} = \sup_{\norm{v}_{\uc}\leq 1} \ip{\omega^*}{v} \leq \sup_{\norm{v}_{\uc}\leq 1} J(v) = \sup_{J(v)\leq 1} J(v) = 1.
\end{align*}
Hence in this case the (\ref{eq: subdiff_cond}) condition is equivalent to
\begin{align*}
\norm{\sum_{j=1}^{n} \lambda_jK^*Ku_{\lambda_j}}_{\uc^*} \leq 1.
\end{align*}
\end{remark}

\begin{example}
\label{exampel: SUBneverSatisfied}
With this final \eqref{eq: subdiff_cond}-example we want to highlight that given the orthogonality condition \eqref{eq: orthogonality_cond}, the one-homogeneous analogue to Showalter's method, i.e. the ISS flow \eqref{eq: ISSflow} with $J(u) = \| u \|_{L^2(\Omega)}$, $\uc = L^2(\Omega)$, satisfies the (\ref{eq: subdiff_cond}) condition only for $k = 1$. We see this by considering \eqref{eq: subdiff_cond} in this particular case. We have
\begin{align*}
\left\| \sum_{j = 1}^k \lambda_j K^* K u_{\lambda_j} \right\|_{L^2(\Omega)} = \left\| \sum_{j = 1}^k \frac{u_{\lambda_j}}{\| u_{\lambda_j} \|_{L^2(\Omega)}} \right\|_{L^2(\Omega)} \, \text{,}
\end{align*}
due to $\partial \| u \|_{L^2(\Omega)} = u/ \| u \|_{L^2(\Omega)}$ for $\| u \|_{L^2(\Omega)} \neq 0$. Due to the $K$-orthogonality in combination with the singular vector condition \eqref{eq:gensingvec} we observe $0 = \lambda_i \langle Ku_{\lambda_i}, Ku_{\lambda_j} \rangle = \langle u_{\lambda_i}, u_{\lambda_j} \rangle / \| u_{\lambda_i} \|_{L^2(\Omega)}$, which implies $\langle u_{\lambda_i}, u_{\lambda_j} \rangle = 0$. Hence, we have
\begin{align*}
\left\| \sum_{j = 1}^k \frac{u_{\lambda_j}}{\| u_{\lambda_j} \|_{L^2(\Omega)}} \right\|_{L^2(\Omega)}^2 &= \left\langle \sum_{i = 1}^k \frac{u_{\lambda_i}}{\| u_{\lambda_i} \|_{L^2(\Omega)}}, \sum_{j = 1}^k \frac{u_{\lambda_j}}{\| u_{\lambda_j} \|_{L^2(\Omega)}} \right\rangle\\
&= \sum_{i = 1}^k \frac{\langle u_{\lambda_i}, u_{\lambda_i}\rangle}{\| u_{\lambda_i} \|_{L^2(\Omega)} \| u_{\lambda_i} \|_{L^2(\Omega)}} = k \, \text{,}
\end{align*} 
and therefore 
\begin{align*}
\left\| \sum_{j = 1}^k \lambda_j K^* K u_{\lambda_j} \right\|_{L^2(\Omega)} = \sqrt{k} \geq 1 \, ,
\end{align*}
for $k \in \N$. Thus, according to Remark \ref{remark: SubdiffEquivDualNorm}, the \eqref{eq: subdiff_cond} condition is only satisfied for $k = 1$.
\end{example}

\subsection{Inverse Scale Space Decomposition}
We now provide our main result which states that the ISS flow gives a perfect decomposition into the generalised singular vectors representing the data of the flow given that (\ref{eq: orthogonality_cond}) and (\ref{eq: subdiff_cond}) are satisfied.

\begin{theorem}[Inverse Scale Space Decomposition] 
\label{thm: ExactReconstruction}
Let $\{u_{\lambda_j}\}_{j=1}^{n}$ be a set of $K$-normalised singular vectors of $J$ with corresponding singular values $\{\lambda_j\}_{j=1}^{n}$. Assume that they satisfy the orthogonality condition (\ref{eq: orthogonality_cond}) and the (\ref{eq: subdiff_cond}) condition. Then, if the data $f$ is given by $f = \sum_{j=1}^{n} \gamma_j Ku_{\lambda_j}$ for positive constants $\{\gamma_j\}_{j=1}^{n}$, a solution of the inverse scale space flow (\ref{eq: ISSflow}) is given by
\begin{align*}
u(t) = \left\{
        \begin{array}{ll}
            0, & \quad 0\leq t < t_1, \\[0.1cm]
            \sum_{j=1}^{k} \gamma_j u_{\lambda_j}, & \quad t_k \leq t < t_{k+1} \,\text{ for }\, k=1,...,n-1, \\[0.2cm]
            \sum_{j=1}^{n} \gamma_j u_{\lambda_j}, & \quad t_n \leq t,
        \end{array}
    \right.
\end{align*}
where $t_k = \lambda_k/\gamma_k$ and $t_k < t_{k+1}$.
\end{theorem}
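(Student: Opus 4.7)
I would prove the theorem by constructing an explicit subgradient $p(t)$ compatible with the candidate $u(t)$ and then verifying that the pair $(u,p)$ solves the inverse scale space flow \eqref{eq: ISSflow}. Without loss of generality the enumeration is chosen so that $0 =: t_0 < t_1 < t_2 < \cdots < t_n$ with $t_k = \lambda_k/\gamma_k$, and I set $t_{n+1} := +\infty$. Since $u(t)$ is piecewise constant on the intervals $[t_k, t_{k+1})$, integrating the ODE $\partial_t p(t) = K^*(f - Ku(t))$ from $p(0)=0$ and using $t_k\gamma_k = \lambda_k$ gives, by a short induction,
\begin{align*}
p(t) \;=\; q_k \;+\; t\sum_{j=k+1}^{n}\gamma_j\, K^*Ku_{\lambda_j}
\qquad \text{on } [t_k,\,t_{k+1}),
\end{align*}
where $q_k := \sum_{j=1}^{k}\lambda_j K^*Ku_{\lambda_j}$ and $q_0 := 0$. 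This expression is continuous across each jump time $t_k$, and its time-derivative matches $K^*(f-Ku(t))$ on every subinterval, so the ODE part of \eqref{eq: ISSflow} is automatic with the correct initial datum.

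The remaining and crucial step is to verify the inclusion $p(t) \in \partial J(u(t))$. Setting $\mu_j := \gamma_j/\lambda_j = 1/t_j$ and using $\lambda_j K^*Ku_{\lambda_j} = q_j - q_{j-1}$, an Abel summation by parts rearranges
\begin{align*}
p(t) \;=\; (1 - t\mu_{k+1})\,q_k \;+\; t\sum_{j=k+1}^{n-1}(\mu_j - \mu_{j+1})\,q_j \;+\; t\mu_n\, q_n.
\end{align*}
The ordering $t_1<t_2<\cdots<t_n$ gives $\mu_1 > \mu_2 > \cdots > \mu_n > 0$; combined with $t < t_{k+1} = 1/\mu_{k+1}$, every coefficient on the right-hand side is non-negative, and telescoping shows they sum to $1$. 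Hence $p(t)$ is a convex combination of $q_k, q_{k+1}, \ldots, q_n$ (and of $q_0 = 0$ in the case $k=0$).

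To conclude, I invoke Proposition~\ref{prop: subdifferential_linearity_equivalences}, which uses both \eqref{eq: orthogonality_cond} and \eqref{eq: subdiff_cond} and states that $q_m \in \partial J\bigl(\sum_{j=1}^{m} c_j u_{\lambda_j}\bigr)$ for all $c_j \geq 0$. Choosing $c_j = \gamma_j$ for $j \leq k$ and $c_j = 0$ for $k < j \leq m$ gives $q_m \in \partial J(u(t))$ for every $m \geq k$, and convexity of the subdifferential then yields $p(t) \in \partial J(u(t))$. The base case $k=0$ is identical with $q_0 = 0 \in \partial J(0)$. I expect the main obstacle to be spotting the summation-by-parts rewriting and checking non-negativity of the coefficients; once this rearrangement is in place the geometric content of \eqref{eq: orthogonality_cond} together with \eqref{eq: subdiff_cond} feeds in cleanly through Proposition~\ref{prop: subdifferential_linearity_equivalences}, and the inverse scale space equation is satisfied on every subinterval, including the terminal one $[t_n,\infty)$ where $p(t)\equiv q_n$ and $K^*(f-Ku(t))=0$.
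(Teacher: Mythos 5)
Your proposal is correct, and the candidate pair $(u,p)$ you construct is exactly the one the paper uses; the difference lies in how the inclusion $p(t)\in\partial J(u(t))$ is verified. The paper's proof checks the two criteria of the characterisation \eqref{eq: subdiff_characterization} separately: the equality $\ip{p(t)}{u(t)}=J(u(t))$ via the orthogonality condition and Proposition \ref{prop: J_linearity}, and the inequality $\ip{p(t)}{v}\leq J(v)$ by an iterative sign-checking process that successively replaces $t$ by $t_1, t_2,\dots$ and peels off one $p_{\lambda_j}$ at a time, invoking the partial sums of \eqref{eq: subdiff_cond} at each stage. Your Abel-summation rewriting
\begin{align*}
p(t) \;=\; (1 - t\mu_{k+1})\,q_k \;+\; t\sum_{j=k+1}^{n-1}(\mu_j - \mu_{j+1})\,q_j \;+\; t\mu_n\, q_n
\end{align*}
makes the structure underlying that iterative process explicit: $p(t)$ is a convex combination of the partial sums $q_k,\dots,q_n$, each of which lies in $\partial J\left(\sum_{j=1}^{k}\gamma_j u_{\lambda_j}\right)$ by Proposition \ref{prop: subdifferential_linearity_equivalences} (applied with the coefficients $c_j=\gamma_j$ for $j\leq k$ and $c_j=0$ for $j>k$), so convexity of the subdifferential finishes the argument. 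This buys you two things: both criteria of \eqref{eq: subdiff_characterization} are handled in a single stroke rather than separately, and the transition from Step 1 to Step 2 of the paper's proof (where the paper merely asserts that ``the same process'' applies despite the extra $q_k$ term in $p(t)$) becomes a uniform statement valid for all $k\geq 0$. The coefficient bookkeeping (non-negativity from the ordering $\mu_1>\dots>\mu_n>0$ and $t<t_{k+1}$, telescoping sum equal to $1$, and continuity of $p$ across $t_{k+1}$ from $t_{k+1}\gamma_{k+1}=\lambda_{k+1}$) all checks out, so your argument is a complete and somewhat cleaner proof relying on exactly the same hypotheses.
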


\begin{proof}
The proof is divided into the following three steps corresponding to the three different cases for $u(t)$: \\

\setlength{\leftskip}{1cm}

\textcolor{blue}{\textbf{Step 1:}} $0 \leq t < t_1$.\\
\textcolor{orange}{\textbf{Step 2:}} $t_k \leq t < t_{k+1}$ for $k=1,...,n-1$ \\
\textcolor{red}{\textbf{Step 3:}} $t_n \leq t$. \\

\setlength{\leftskip}{0pt}

For every step we have to show that the solution satisfies the inverse scale space flow. \\

\textcolor{blue}{\textbf{Step 1:}} Let $0 \leq t < t_1$. Then $u(t) = 0$, and from the inverse scale space flow we get
\begin{align*}
\partial_t p(t) = K^* f, \quad p(0) = 0.
\end{align*}
Integrating $\int_{0}^{t} \partial_\tau p(\tau) d\tau$ yields
\begin{align*}
p(t) = tK^*f = t\left(\sum_{j=1}^{n} \frac{\gamma_j}{\lambda_j}p_{\lambda_j}\right),
\end{align*}
where we have used the notation $p_{\lambda_j} = \lambda_jK^*Ku_{\lambda_j}$. We need to show that $p(t)\in\partial J(u(t)) = \partial J(0)$. Using the characterisation (\ref{eq: subdiff_characterization}) of the subdifferential we need to show that $\ip{p(t)}{v}\leq J(v)$ for all $v\in\uc$. We have
\begin{align*}
\ip{p(t)}{v} = t\ip{\sum_{j=1}^{n} \frac{\gamma_j}{\lambda_j}p_{\lambda_j}}{v}.
\end{align*}
Hence we need to show
\begin{align}
t\ip{\sum_{j=1}^{n} \frac{\gamma_j}{\lambda_j}p_{\lambda_j}}{v} \leq J(v), \; \forall v\in \uc.
\label{eq: t1_ineq}
\end{align}
If $\ip{\sum_{j=1}^{n} \frac{\gamma_j}{\lambda_j}p_{\lambda_j}}{v}\leq 0$ then inequality (\ref{eq: t1_ineq}) is trivially satisfied since $J(v) \geq 0$ for all $v\in\uc$. If $\ip{\sum_{j=1}^{n} \frac{\gamma_j}{\lambda_j}p_{\lambda_j}}{v} > 0$ then
\begin{align*}
t\ip{\sum_{j=1}^{n} \frac{\gamma_j}{\lambda_j}p_{\lambda_j}}{v} < t_1\ip{\sum_{j=1}^{n} \frac{\gamma_j}{\lambda_j}p_{\lambda_j}}{v} = \ip{p_{\lambda_1}}{v} + t_1\ip{\sum_{j=2}^{n} \frac{\gamma_j}{\lambda_j}p_{\lambda_j}}{v}
\end{align*}
If $\ip{\sum_{j=2}^{n} \frac{\gamma_j}{\lambda_j}p_{\lambda_j}}{v}\leq 0$ inequality (\ref{eq: t1_ineq}) is satisfied since $p_{\lambda_1} \in \partial J(0)$. If $\ip{\sum_{j=2}^{n} \frac{\gamma_j}{\lambda_j}p_{\lambda_j}}{v} > 0$ then
\begin{align*}
\ip{p_{\lambda_1}}{v} &+ t_1\ip{\sum_{j=2}^{n} \frac{\gamma_j}{\lambda_j}p_{\lambda_j}}{v} < \ip{p_{\lambda_1}}{v} + t_2\ip{\sum_{j=2}^{n} \frac{\gamma_j}{\lambda_j}p_{\lambda_j}}{v} \\
&= \ip{p_{\lambda_1} + p_{\lambda_2}}{v} + t_2\ip{\sum_{j=3}^{n} \frac{\gamma_j}{\lambda_j}p_{\lambda_j}}{v}
\end{align*}
If $\ip{\sum_{j=3}^{n} \frac{\gamma_j}{\lambda_j}p_{\lambda_j}}{v}\leq 0$ then inequality (\ref{eq: t1_ineq}) is satisfied since $p_{\lambda_1}+p_{\lambda_2}\in\partial J(0)$ by the (\ref{eq: subdiff_cond}) condition. If $\ip{\sum_{j=3}^{n} \frac{\gamma_j}{\lambda_j}p_{\lambda_j}}{v} > 0$ we continue the process. The process terminates when we reach
\begin{align*}
\ip{\sum_{j=1}^{n} p_{\lambda_j}}{v},
\end{align*}
which is less than or equal to $J(v)$ since $\sum_{j=1}^{n} p_{\lambda_j} \in \partial J(0)$ by the (\ref{eq: subdiff_cond}) condition. In any case we have shown that $p(t)\in\partial J(0)$ for $0\leq t<t_1$. \\

\textcolor{orange}{\textbf{Step 2:}} Let $t_k \leq t < t_{k+1}$. Then $u(t) = \sum_{j=1}^{k} \gamma_j u_{\lambda_j}$ and from the inverse scale space flow we get
\begin{align*}
\partial_t p(t) = K^*\left(f-\sum_{j=1}^{k} \gamma_j Ku_{\lambda_j}\right) = \sum_{j=k+1}^{n} \gamma_jK^*K u_{\lambda_j} = \sum_{j=k+1}^{n} \frac{\gamma_j}{\lambda_j}p_{\lambda_j}.
\end{align*}
A continuous extension of $p$ at $t = t_k$ is given by
\begin{align*}
p(t) = \sum_{j=1}^{k} p_{\lambda_j} + t\left(\sum_{j=k+1}^{n} \frac{\gamma_j}{\lambda_j}p_{\lambda_j}\right).
\end{align*}
We require $p(t)\in\partial J(u(t)) = \partial J \left(\sum_{j=1}^{k} \gamma_j u_{\lambda_j}\right)$. Using the characterisation (\ref{eq: subdiff_characterization}) of the subdifferential we need to show
\begin{align}
\ip{p(t)}{\sum_{j=1}^{k} \gamma_j u_{\lambda_j}} = J\left(\sum_{j=1}^{k} \gamma_j u_{\lambda_j}\right) \quad\text{and}\quad \ip{p(t)}{v} \leq J(v),\;\forall v\in\uc.
\label{eq:criteria}
\end{align}
For the first criterion of (\ref{eq:criteria}) we get
\begin{align*}
\ip{p(t)}{\sum_{j=1}^{k} \gamma_j u_{\lambda_j}} &= \ip{\sum_{j=1}^{k} p_{\lambda_j}}{\sum_{j=1}^{k} \gamma_j u_{\lambda_j}} + t\ip{\sum_{j=k+1}^{n}\frac{\gamma_j}{\lambda_j}p_{\lambda_j}}{\sum_{j=1}^{k} \gamma_j u_{\lambda_j}} \\
&= \sum_{j=1}^{k} \gamma_j J(u_{\lambda_j}) = J\left(\sum_{j=1}^{k} \gamma_j u_{\lambda_j}\right).
\end{align*}
We have used the formula for $p(t)$, the properties of singular vectors, the orthogonality condition (\ref{eq: orthogonality_cond}) and for the last equality Proposition \ref{prop: J_linearity}. For the second criterion of (\ref{eq:criteria}) we can use the same process as for \textcolor{blue}{\textbf{Step 1}}. \\

\textcolor{red}{\textbf{Step 3:}} Let $t_n \leq t$. Then $u(t) = \sum_{j=1}^{n} \gamma_j u_{\lambda_j}$ and from the inverse scale space flow we get
\begin{align*}
\partial_t p(t) = K^*(f - \sum_{j=1}^{n} \gamma_j Ku_{\lambda_j}) = 0.
\end{align*}
A continuous extension of $p$ at $t = t_n$ is
\begin{align*}
p(t) = \sum_{j=1}^{n} p_{\lambda_j}.
\end{align*}
We need to show that $p(t)\in\partial J(u(t)) = \partial J\left(\sum_{j=1}^{n} \gamma_j u_{\lambda_j}\right)$. This follows directly from Proposition \ref{prop: subdifferential_linearity_equivalences}.
\end{proof}
\begin{remark}
\label{remark: n=2}
Note that for $n=2$ the (\ref{eq: subdiff_cond}) condition is both necessary and sufficient under the orthogonality condition (\ref{eq: orthogonality_cond}). In this case we can compute
\begin{align*}
p(t) = \left\{
        \begin{array}{ll}
            tK^*f, & \quad 0\leq t < t_1, \\[0.1cm]
            p_{\lambda_1} + t\gamma_2K^*Ku_{\lambda_2}, & \quad t_1 \leq t < t_2, \\[0.1cm]
            p_{\lambda_1}+p_{\lambda_2}, & \quad t_2 \leq t.
        \end{array}
    \right.
\end{align*}
Hence we need $p(t)\in\partial J(u(t))$ for $t_2 \leq t$, i.e.
\begin{align*}
p_{\lambda_1}+p_{\lambda_2} \in \partial J(\gamma_1 u_{\lambda_1} + \gamma_2u_{\lambda_2}) \subset \partial J(0).
\end{align*}
This shows that the (\ref{eq: subdiff_cond}) condition is necessary. In fact, for arbitrary $n$ we always need $\sum_{j=1}^{n} p_{\lambda_j} \in \partial J(0)$ for the decomposition to happen. This is, however, not enough, as we will see in Example \ref{example: PartialSUB0Necessary}. \\
\end{remark}
\begin{remark}
If $u_{\lambda_j}$ is a singular vector then also $-u_{\lambda_j}$ is a singular vector. Hence if $\gamma_j < 0$, we can just pull the negative sign of $\gamma_j$ onto $u_{\lambda_j}$ and regard this as a singular vector with a positive contribution to $f$. Then Theorem \ref{thm: ExactReconstruction} still holds true with $t_k = \lambda_k/|\gamma_k|$ and the (\ref{eq: subdiff_cond}) condition being replaced by
\begin{align*}
\sum_{j=1}^{k} \frac{\gamma_j}{|\gamma_j|}\lambda_jK^*Ku_{\lambda_j}\in\partial J(0),\; \forall k\in\{1;n\}.
\end{align*}
\end{remark}
\begin{remark}
In Example \ref{exampel: SUBneverSatisfied} we have shown that the (\ref{eq: subdiff_cond}) condition is never satisfied for $n > 1$ for $J(u) = \norm{u}_{L^2(\Omega)}$, $\uc = L^2(\Omega)$. Since by Remark \ref{remark: n=2} the full sum over the subgradients of the singular vectors always has to be in the subdifferential of $J$ at zero in order for the ISS flow to give a decomposition into the $K$-orthogonal singular vectors representing $f$, we then conclude that the inverse scale space decomposition can never happen in this case without re-weighting its coefficients.\\
\end{remark}

We give a comparison to a decomposition result in \cite[Theorem 2]{gilboa2015semi} under the earlier mentioned (LIS) condition \cite[Definition 7]{gilboa2015semi} and orthogonality condition (FO) \cite[Definition 5]{gilboa2015semi}. As previously mentioned the (LIS) condition requires linearity in the subdifferential for both positive and negative coefficients. The orthogonality condition is formulated a bit differently but boils down to the same as (\ref{eq: orthogonality_cond}) and is actually automatically satisfied under the (LIS) condition when $J$ is absolutely one-homogeneous. It is then shown that the (forward) scale space flow
\begin{align*}
\partial_t u(t) = -p(t), \quad p(t)\in\partial J(u(t)), \quad u(0) = f
\end{align*}
gives a decomposition into singular vectors under the (LIS) and (FO) conditions when $f$ is given as the sum of two singular vectors and $J$ is one-homogeneous. The results are stated for the forward scale space flow and for denoising problems where $K$ is the identity and involve decomposition into two singular vectors only. \\

To emphasise that the partial sum conditions of the (\ref{eq: subdiff_cond}) condition are necessary we look at the following example:
\begin{example}
\label{example: PartialSUB0Necessary}
Let $J(u) = \norm{u}_{\ell^1(\R^m)}$ with $\partial J(u) = \text{sign}(u)$, see example \ref{exm:dictionaryexm}. We reuse the convolution operator $K:\ell^1(\R^m) \to \ell^2(\R^m)$ from Example \ref{example: l1_convolution}. Define the vectors
\begin{align*}
&u_{\lambda_1} = (0,0,0,1,-1,0,0,0,0)^T, &&u_{\lambda_2} = (0,0,0,0,-1,1,0,0,0)^T, \\
&u_{\lambda_3} = \frac{1}{\sqrt{2}}(0,0,0,1,0,1,0,0,0)^T, &&u_{\lambda_4} = (0,-1,0,0,0,0,0,0,0)^T, \\
&u_{\lambda_5} = (0,0,0,0,0,0,0,-1,0)^T.
\end{align*}
Using the notation $p_{\lambda_j} = \lambda_jK^*Ku_{\lambda_j}$ we get
\begin{align*}
&p_{\lambda_1} = (0,0,1,1,-1,-1,0,0,0)^T\in\partial J(u_{\lambda_1}), \\ &p_{\lambda_2} = (0,0,0,-1,-1,1,1,0,0)^T\in\partial J(u_{\lambda_2}), \\
&p_{\lambda_3} = (0,0,1/2,1,1,1,1/2,0,0)^T\in\partial J(u_{\lambda_3}), \\ &p_{\lambda_4} = (-1/2,-1,-1/2,0,0,0,0,0,0)^T \in\partial J(u_{\lambda_4}), \\
&p_{\lambda_5} = (0,0,0,0,0,0,-1/2,-1,-1/2)^T \in\partial J(u_{\lambda_5}).
\end{align*}
Hence $\{u_{\lambda_j}\}_{j=1}^5$ is a set of five singular vectors. In Figure \ref{fig: l1conv2_example_singvecs} we see all the singular vectors and their corresponding subgradients. We observe that two peaks of the same sign cannot get too close if we want to obtain a singular vector since the convolution kernel spreads out the peaks. We also see that the subgradients have their maximum magnitude $1$ at the peaks. \\

\begin{figure}
\begin{center}
\subfloat[\textcolor{blue}{$u_{\lambda_1}$} and \textcolor{red}{$p_{\lambda_1}$}.]{\includegraphics[width=0.32\textwidth]{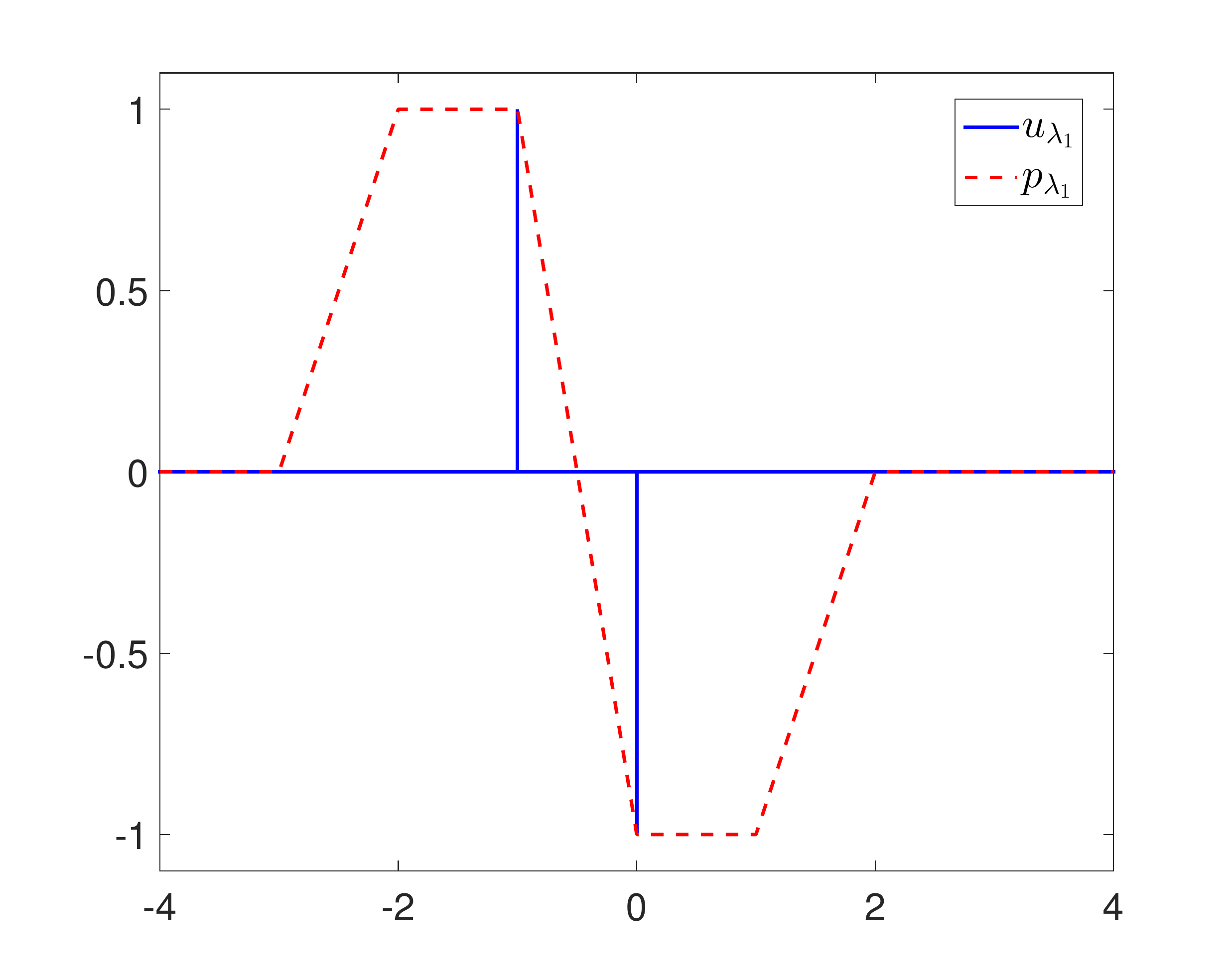}\label{subfig: l1conv2_example_singvecs_1}}
\subfloat[\textcolor{blue}{$u_{\lambda_2}$} and \textcolor{red}{$p_{\lambda_2}$}.]{\includegraphics[width=0.32\textwidth]{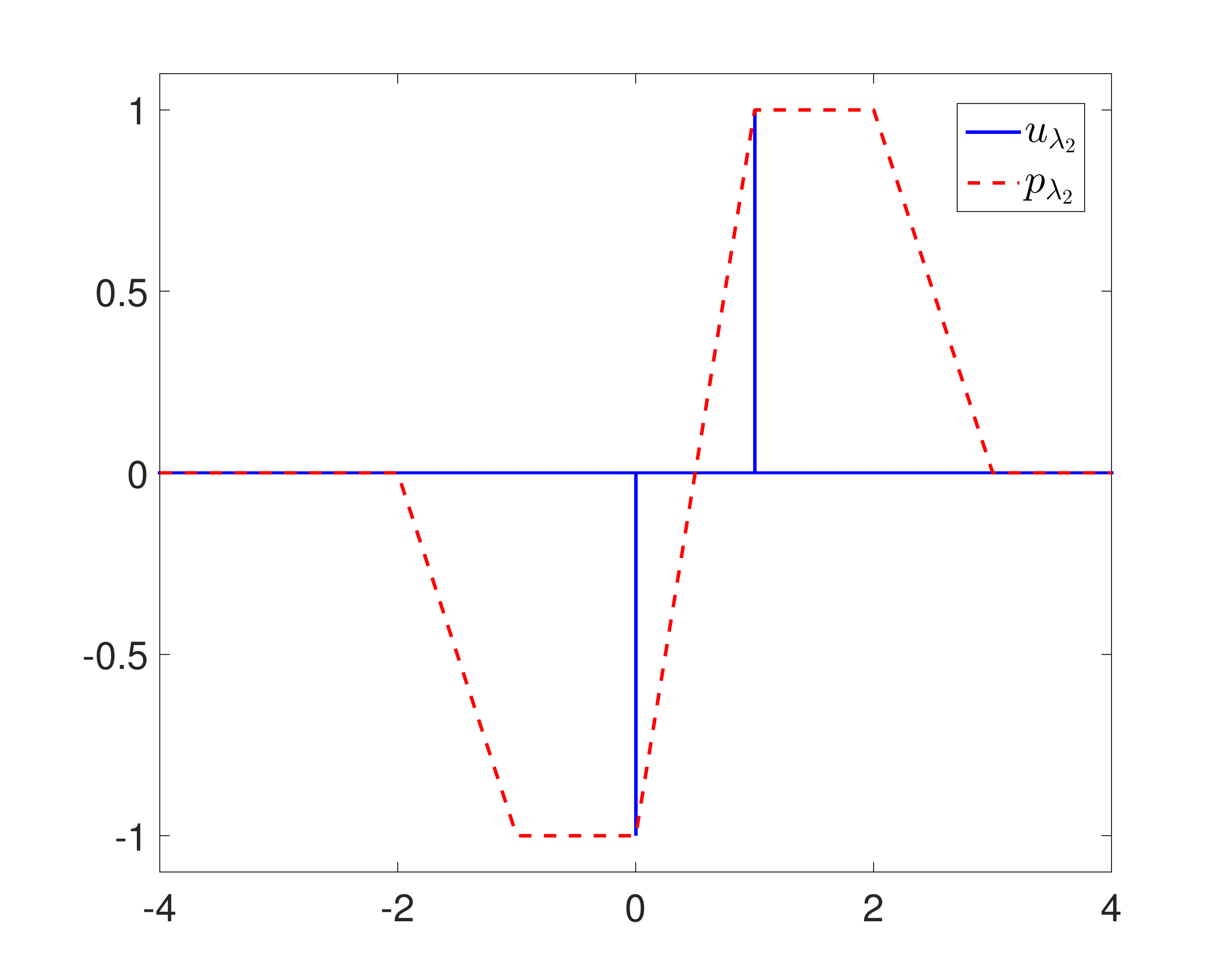}\label{subfig: l1conv2_example_singvecs_2}}
\subfloat[\textcolor{blue}{$u_{\lambda_3}$} and \textcolor{red}{$p_{\lambda_3}$}.]{\includegraphics[width=0.32\textwidth]{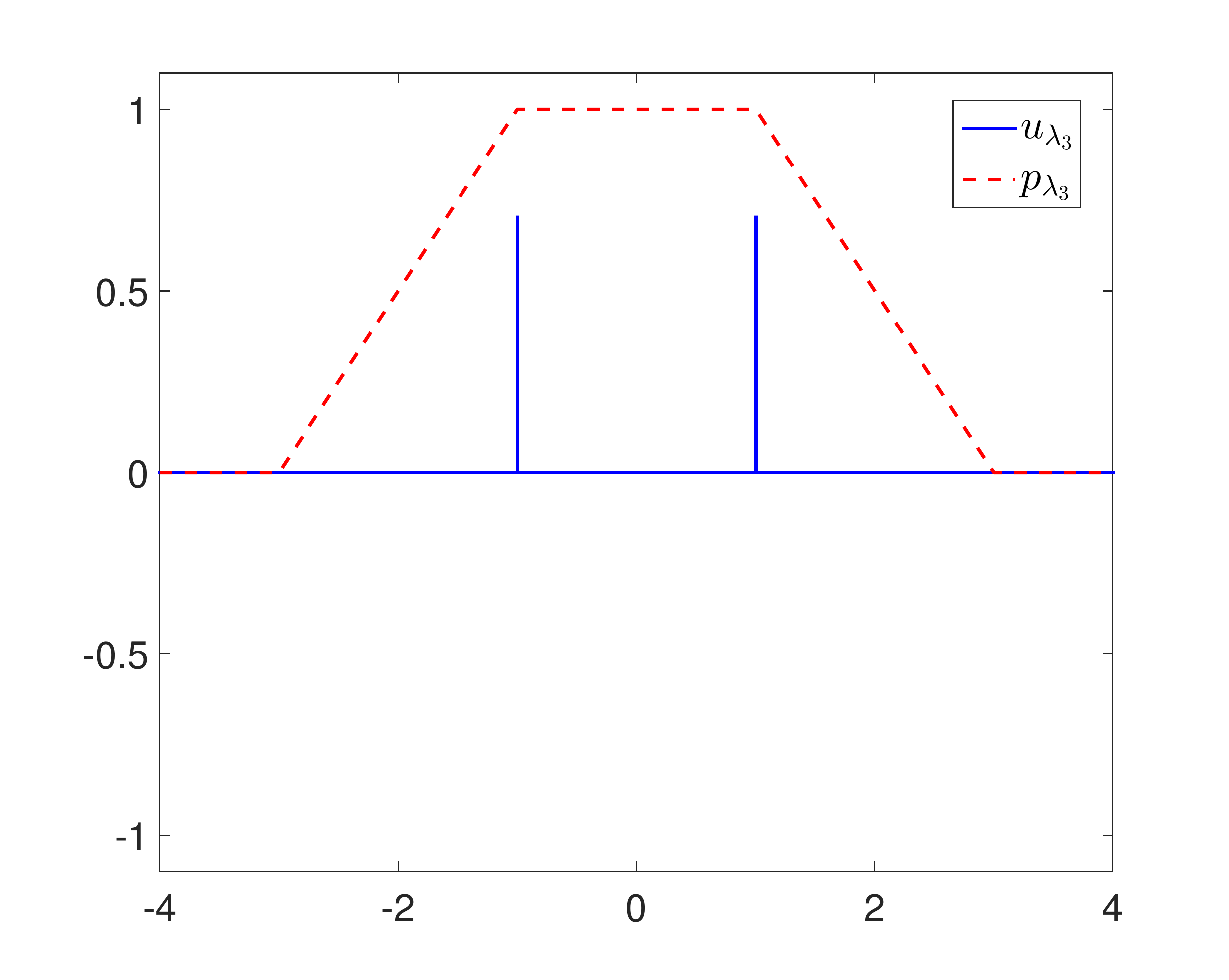}\label{subfig: l1conv2_example_singvecs_3}}\\
\subfloat[\textcolor{blue}{$u_{\lambda_4}$} and \textcolor{red}{$p_{\lambda_4}$}.]{\includegraphics[width=0.32\textwidth]{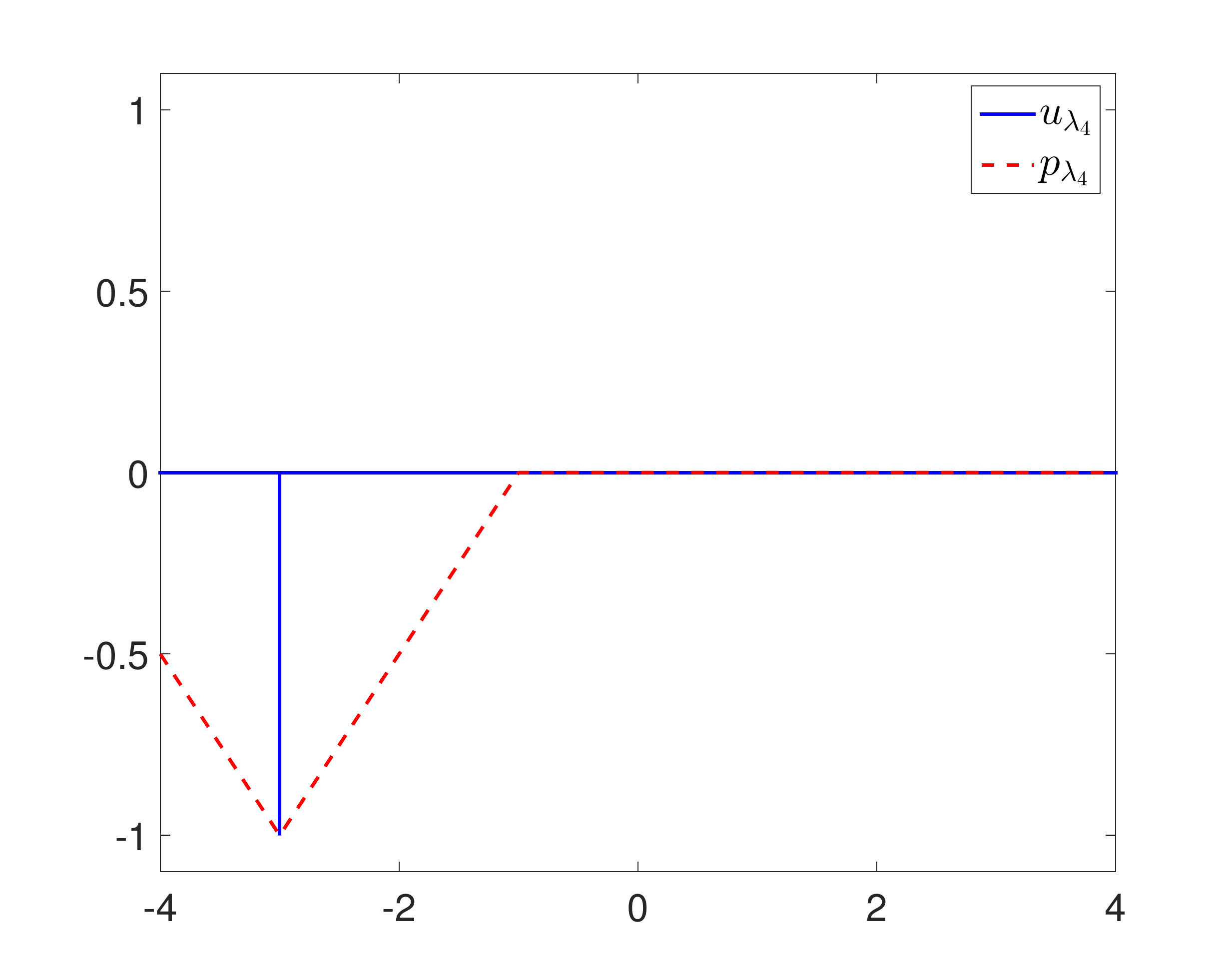}\label{subfig: l1conv2_example_singvecs_4}}
\subfloat[\textcolor{blue}{$u_{\lambda_5}$} and \textcolor{red}{$p_{\lambda_5}$}.]{\includegraphics[width=0.32\textwidth]{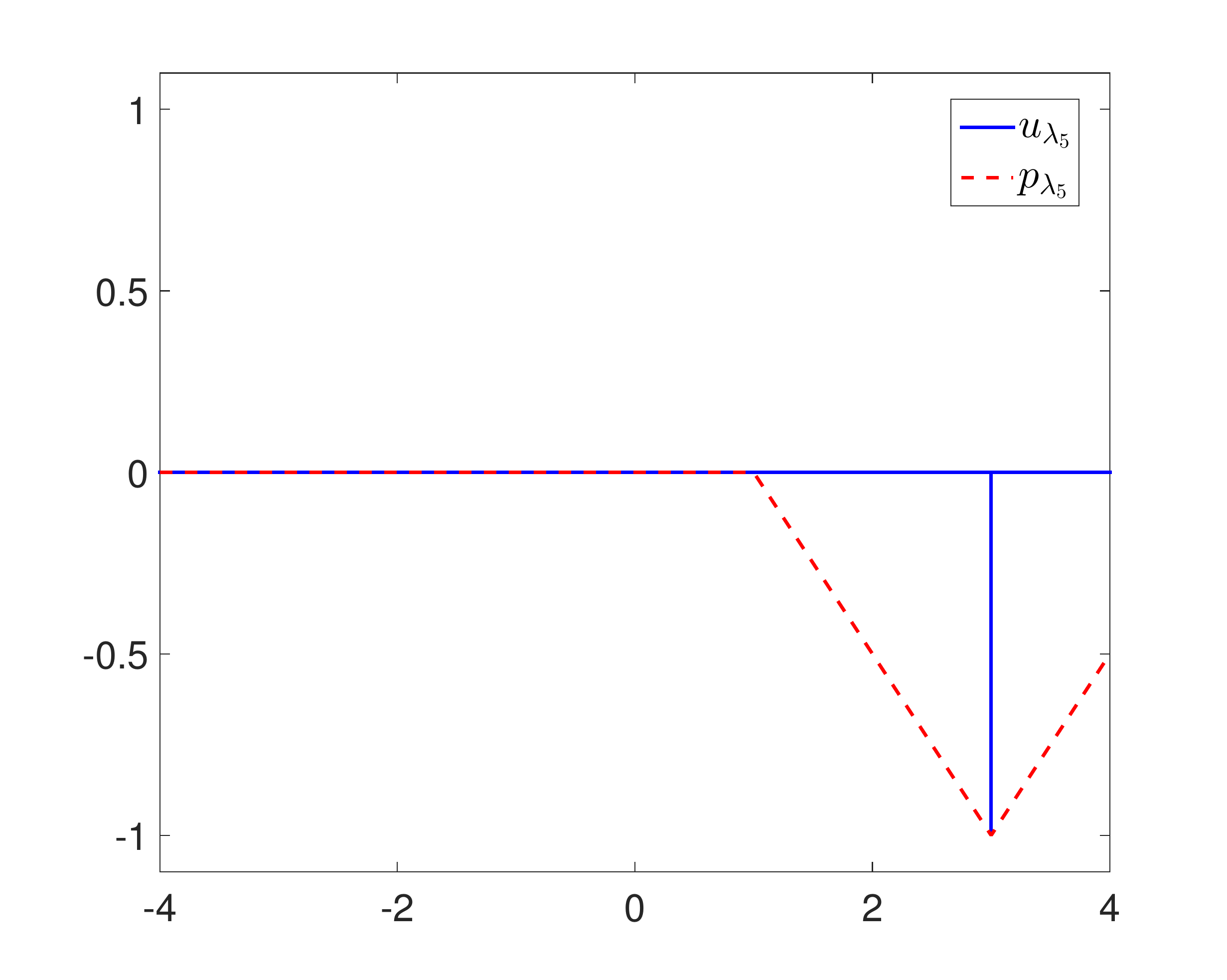}\label{subfig: l1conv2_example_singvecs_5}}
\end{center}
\vspace{0.5cm}
\caption{Singular vectors and their corresponding subgradients for Example \ref{example: PartialSUB0Necessary}}
\label{fig: l1conv2_example_singvecs}
\end{figure}

We note that $\ip{Ku_{\lambda_i}}{Ku_{\lambda_j}} = \frac{1}{\lambda_i}\ip{p_{\lambda_i}}{u_{\lambda_j}} = 0$ for $i\neq j$. Hence the orthogonality condition (\ref{eq: orthogonality_cond}) is satisfied. The singular vectors are also normalised such that $\norm{Ku_{\lambda_j}}_{\ell^2(\R^m)} = 1$ for $j=1,...,5$ holds true. Now let $\{\gamma_j\}_{j=1}^5$ be such that $\lambda_k/\gamma_k < \lambda_{k+1}/\gamma_{k+1}$ for $k=1,...,4$. Then if the (\ref{eq: subdiff_cond}) condition is satisfied, Theorem \ref{thm: ExactReconstruction} tells us that the ISS flow gives a decomposition of the data $f = \sum_{j=1}^{5} \gamma_jKu_{\lambda_j}$ into the singular vectors. But the (\ref{eq: subdiff_cond}) condition is not satisfied since the partial sums over the subgradients are not in the subdifferential of $J$ at zero:
\begin{align*}
p_{\lambda_1}+p_{\lambda_2} &= (0,0,1,0,\textcolor{red}{-2},0,1,0,0)^T \notin\partial J(0), \\
p_{\lambda_1}+p_{\lambda_2}+p_{\lambda_3} &= (0,0,\textcolor{red}{3/2},1,-1,1,\textcolor{red}{3/2},0,0)^T \notin\partial J(0), \\
p_{\lambda_1}+p_{\lambda_2}+p_{\lambda_3}+p_{\lambda_4} &= (-1/2,-1,1,1,-1,1,\textcolor{red}{3/2},0,0)^T\notin \partial J(0), \\
p_{\lambda_1}+p_{\lambda_2}+p_{\lambda_3}+p_{\lambda_4}+p_{\lambda_5} &= (-1/2,-1,1,1,-1,1,1,-1,-1/2)^T \in \partial J(0).
\end{align*}
Only the full sum over all the subgradients is in the subdifferential of $J$ at zero. In Figure \ref{fig: l1conv2_example_sum_p} we see some of the partial sums over the subgradients. In Figure \ref{subfig: l1conv2_example_sum3} we see the sum over the first three subgradients. We see that the sum does not have its maximum magnitude at the peaks and it exceeds the limit of a magnitude of one. Hence the sum of the first three subgradients is not in the subdifferential of $J$ at zero. In Figure \ref{subfig: l1conv2_example_sum5} we see the sum over all five subgradients. This sum has its maximum magnitude at the peaks and does not exceed the magnitude limit of one. Hence the full sum is in the subdifferential of $J$ at zero. \\

\begin{figure}[!h]
\begin{center}
\subfloat[\textcolor{blue}{$\sum_{j=1}^{3} u_{\lambda_j}$} and \textcolor{red}{$\sum_{j=1}^{3} p_{\lambda_j}$}.]{\includegraphics[width=0.49\textwidth]{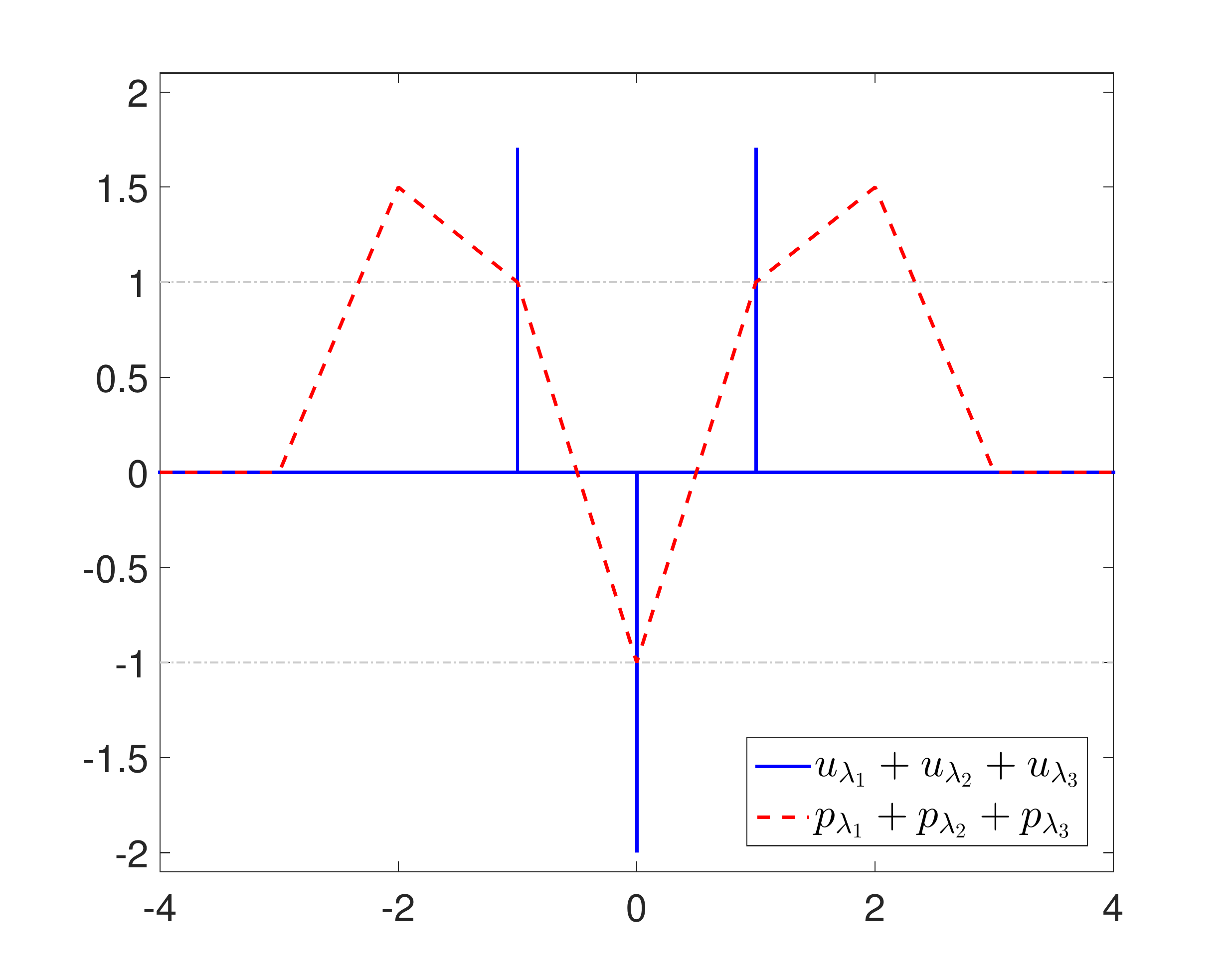}\label{subfig: l1conv2_example_sum3}}
\subfloat[\textcolor{blue}{$\sum_{j=1}^{5} u_{\lambda_j}$} and \textcolor{red}{$\sum_{j=1}^{5} p_{\lambda_j}$}]{\includegraphics[width=0.49\textwidth]{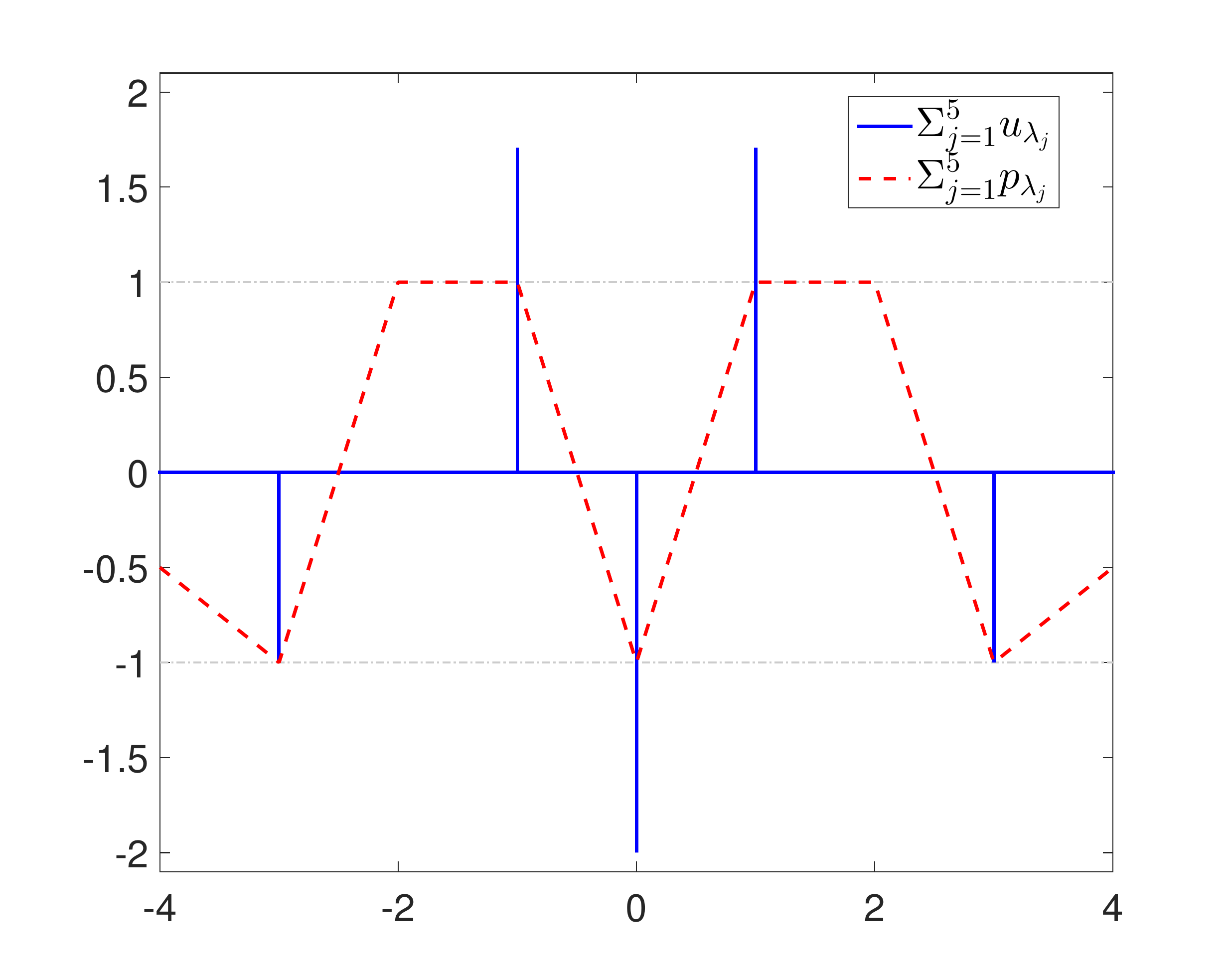}\label{subfig: l1conv2_example_sum5}}
\end{center}
\vspace{0.5cm}
\caption{Sum of subgradients for Example \ref{example: PartialSUB0Necessary}. We see that the partial sum conditions of the (\ref{eq: subdiff_cond}) condition are not satisfied. The sum over the first three subgradients seen in Figure \ref{subfig: l1conv2_example_sum3} exceeds one and can therefore not be in the subdifferential of $J$ at zero. In Figure \ref{subfig: l1conv2_example_sum5} the full sum over the subgradients is seen and this is in the subdifferential of $J$ at zero.}
\label{fig: l1conv2_example_sum_p}
\end{figure}

We are going to see in Section \ref{sec: SUB0violations} that the first peak reconstructed by the ISS flow in this case is at the middle position. This peak does not match any of the singular vectors $u_{\lambda_j}$, $j=1,\dots,5$, and we will not obtain the decomposition.

\end{example}

\subsection{Singular Vector Fusion}
One might think that if two or more singular vectors representing the data $f$ add up to another singular vector, then the ISS flow is not able to give a complete decomposition into the singular vectors. However, it turns out that under the (\ref{eq: subdiff_cond}) condition and the orthogonality condition (\ref{eq: orthogonality_cond}), two or more singular vectors cannot add up to another singular vector. \\

\begin{proposition}[Impossible Singular Vector Fusion] 
\label{prop: SingularVectorFusion}
Let $\{u_{\lambda_j}\}_{j=1}^{n}$ be a set of $K$-normalised singular vectors of $J$ with corresponding singular values $\{\lambda_j\}_{j=1}^{n}$. Assume that they satisfy the orthogonality condition (\ref{eq: orthogonality_cond}) and the (\ref{eq: subdiff_cond}) condition. Let the data $f$ be given by $f = \sum_{j=1}^{n} \gamma_j Ku_{\lambda_j}$, where $\gamma_j>0$ and $\lambda_j/\gamma_j < \lambda_{j+1}/\gamma_{j+1}$. Let $\{u_{\lambda_{j_k}}\}_{k=1}^m$, $m\in\{2;n\}$, be a subset of the singular vectors ordered so that $\lambda_{j_k}/\gamma_{j_k}<\lambda_{j_{k+1}}/\gamma_{j_{k+1}}$. Then no such subset of singular vectors can add up to another singular vector, i.e.
\begin{align*}
\frac{J\left(\sum_{k=1}^{m} \gamma_{j_k}u_{\lambda_{j_k}}\right)}{\norm{ K\left(\sum_{k=1}^{m} \gamma_{j_k} u_{\lambda_{j_k}}\right)}_{\hc}^{2}}K^*K\left(\sum_{k=1}^{m} \gamma_{j_k}u_{\lambda_{j_k}}\right) \notin \partial J\left(\sum_{k=1}^{m} \gamma_{j_k} u_{\lambda_{j_k}}\right).
\end{align*}
\end{proposition}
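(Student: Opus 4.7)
The plan is to assume the singular vector condition for the sum $w := \sum_{k=1}^{m}\gamma_{j_k} u_{\lambda_{j_k}}$ holds and derive a contradiction by testing the resulting subdifferential inclusion against the component with the smallest ratio $\lambda_{j_k}/\gamma_{j_k}$, namely $u_{\lambda_{j_1}}$.

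First I would compute the candidate singular value $\lambda_w := J(w)/\|Kw\|_{\hc}^{2}$ explicitly. The orthogonality condition (\ref{eq: orthogonality_cond}) together with $K$-normalisation immediately give $\|Kw\|_{\hc}^{2} = \sum_{k=1}^{m}\gamma_{j_k}^{2}$. Proposition \ref{prop: J_linearity}, applied with coefficients $c_{j_k} = \gamma_{j_k}$ and $c_j = 0$ for $j \notin \{j_1,\ldots,j_m\}$ (and using $J(u_{\lambda_{j_k}}) = \lambda_{j_k}$ for $K$-normalised singular vectors), yields $J(w) = \sum_{k=1}^{m}\gamma_{j_k}\lambda_{j_k}$. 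Therefore
\[
\lambda_w \;=\; \frac{\sum_{k=1}^{m}\gamma_{j_k}^{2}\,(\lambda_{j_k}/\gamma_{j_k})}{\sum_{k=1}^{m}\gamma_{j_k}^{2}},
\]
which is a convex combination of the ratios $\lambda_{j_k}/\gamma_{j_k}$ with strictly positive weights.

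Next I would suppose for contradiction that $\lambda_w K^{*}K w \in \partial J(w)$. Appealing to the subdifferential characterisation (\ref{eq: subdiff_characterization}) forces $\ip{\lambda_w K^{*}Kw}{v} \leq J(v)$ for every $v \in \dom(J)$. Testing against $v = u_{\lambda_{j_1}}$, the $K$-orthogonality collapses $\ip{Kw}{Ku_{\lambda_{j_1}}}$ to $\gamma_{j_1}$, so the left-hand side equals $\lambda_w\gamma_{j_1}$, while the right-hand side equals $J(u_{\lambda_{j_1}}) = \lambda_{j_1}$. Hence the inclusion would demand $\lambda_w \leq \lambda_{j_1}/\gamma_{j_1}$. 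However, the hypothesis $\lambda_{j_1}/\gamma_{j_1} < \lambda_{j_k}/\gamma_{j_k}$ for $k \geq 2$ combined with the convex-combination expression for $\lambda_w$ and $m \geq 2$ yields the strict inequality $\lambda_w > \lambda_{j_1}/\gamma_{j_1}$, a contradiction.

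The main subtlety, I expect, is noticing that the equality part $\ip{\lambda_w K^{*}Kw}{w} = J(w)$ of the subdifferential characterisation is automatically satisfied by the very definition of $\lambda_w$ (both sides reduce to $\sum_{k} \gamma_{j_k}\lambda_{j_k}$ after using (\ref{eq: orthogonality_cond})). Consequently the non-inclusion must come from the inequality part, and identifying the correct test function $v = u_{\lambda_{j_1}}$, which realises the extremal ratio and hence witnesses the failure, is the crux of the argument.
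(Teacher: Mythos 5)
Your proposal is correct and follows essentially the same route as the paper's proof: contradiction, explicit computation of the candidate singular value via (\ref{eq: orthogonality_cond}) and Proposition \ref{prop: J_linearity}, and testing the subdifferential inequality against $v = u_{\lambda_{j_1}}$. The only difference is cosmetic — you close the argument by reading $\lambda_w$ as a convex combination of the ratios $\lambda_{j_k}/\gamma_{j_k}$, which is a slightly cleaner way to obtain the same contradiction that the paper reaches by rearranging and cancelling terms.
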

\begin{proof}
The proof is by contradiction. Hence we assume that $\sum_{k=1}^{m} \gamma_{j_k}u_{\lambda_{j_k}}$ is a singular vector. Using (\ref{eq: orthogonality_cond}) and Proposition \ref{prop: J_linearity} we can explicitly compute the singular value:
\begin{align*}
\frac{J\left(\sum_{k=1}^{m} \gamma_{j_k}u_{\lambda_{j_k}}\right)}{\norm{ K\left(\sum_{k=1}^{m} \gamma_{j_k} u_{\lambda_{j_k}}\right)}_{\hc}^{2}} = \frac{\sum_{k=1}^{m}\gamma_{j_k}\lambda_{j_k}}{\sum_{k=1}^{m} \gamma_{j_k}^2}.
\end{align*}
Using the characterisation (\ref{eq: subdiff_characterization}) of the subdifferential we then get
\begin{align*}
J(v) &\geq \ip{\frac{\sum_{k=1}^{m}\gamma_{j_k}\lambda_{j_k}}{\sum_{k=1}^{m} \gamma_{j_k}^2}K^*K\left(\sum_{k=1}^{m} \gamma_{j_k}u_{\lambda_{j_k}}\right)}{v} \\
&= \frac{\sum_{k=1}^{m}\gamma_{j_k}\lambda_{j_k}}{\sum_{k=1}^{m} \gamma_{j_k}^2}\sum_{k=1}^{m}\ip{\gamma_{j_k}K^*Ku_{\lambda_{j_k}}}{v}.
\end{align*}
for all $v\in\dom(J)$. Set $v=u_{\lambda_{j_1}}$. Using $J(u_{{\lambda_j}_1}) = \lambda_{j_1}$ and (\ref{eq: orthogonality_cond}) in the above we obtain
\begin{align*}
\lambda_{j_1} \geq \frac{\sum_{k=1}^{m}\gamma_{j_k}\lambda_{j_k}}{\sum_{k=1}^{m} \gamma_{j_k}^2}\gamma_{j_1}.
\end{align*}
Rearranging and writing out the first term of both sums, the above implies
\begin{align*}
\lambda_{j_1}\gamma_{j_1}^2 + \lambda_{j_1}\sum_{k=2}^{m} \gamma_{j_k}^2 = \lambda_{j_1}\sum_{k=1}^{m} \gamma_{j_k}^2 \geq \gamma_{j_1}\sum_{k=1}^{m} \gamma_{j_k}\lambda_{j_k} = \gamma_{j_1}^2\lambda_{j_1} + \gamma_{j_1}\sum_{k=2}^{m} \gamma_{j_k}\lambda_{j_k}.
\end{align*}
We let the term $\lambda_{j_1}\gamma_{j_1}^2$ on both sides cancel and rearrange again to obtain
\begin{align*}
\frac{\lambda_{j_1}}{\gamma_{j_1}} \geq \frac{\sum_{k=2}^{m} \gamma_{j_k}\lambda_{j_k}}{\sum_{k=2}^{m} \gamma_{j_k}^2} = \frac{\gamma_{j_2}\lambda_{j_2}+\sum_{k=3}^{m} \gamma_{j_k}\lambda_{j_k}}{\sum_{k=2}^{m} \gamma_{j_k}^2}.
\end{align*}
Using the ordering $\lambda_{j_k}/\gamma_{j_k} < \lambda_{j_{k+1}}/\gamma_{j_{k+1}}$ then for $k\geq 3$ we get $\lambda_{j_k} > \frac{\lambda_{j_2}}{\gamma_{j_2}}\gamma_{j_k}$. Applying this in the above we reach
\begin{align*}
\frac{\lambda_{j_1}}{\gamma_{j_1}} > \frac{\gamma_{j_2}\lambda_{j_2} + \frac{\lambda_{j_2}}{\gamma_{j_2}}\sum_{k=3}^{m}\gamma_{j_k}^2}{\sum_{k=2}^{m} \gamma_{j_k}^2} = \frac{\frac{\lambda_{j_2}}{\gamma_{j_2}}\left(\gamma_{j_2}^2+\sum_{k=3}^{m} \gamma_{j_k}^2\right)}{\sum_{k=2}^{m} \gamma_{j_k}^2} = \frac{\lambda_{j_2}}{\gamma_{j_2}}.
\end{align*}
This is a contradiction since by assumption $\lambda_{j_1}/\gamma_{j_1} < \lambda_{j_2}/\gamma_{j_2}$. Hence the assumption that $\sum_{k=1}^{m} \gamma_{j_k}u_{\lambda_{j_k}}$ is a singular vector is false.
\end{proof}

\section{Decomposition of Arbitrary Input Data}
Having addressed the question of when a finite sum of singular vectors can be decomposed via the ISS flow (\ref{eq: ISSflow}), we want to draw our attention to a rather converse question: when is the first non-trivial solution of (\ref{eq: ISSflow}) a singular vector? We first give a characterisation of the first non-trivial solution of the ISS flow when the data $f$ is given by $f = K\omega$, where $\omega$ satisfies the strong source condition (\ref{eq:ssc}). Then we give a characterisation of the first non-trivial ISS solution for arbitrary data $f\in\hc$ and observe that it satisfies the source condition (\ref{eq:sc}). Finally, we investigate under which conditions this first non-trivial solution is a singular vector. Throughout the section Assumption \ref{assumption: overall} will be used.

\subsection{Strong Source Condition}
We identify the first step of the ISS flow for given data $f = K\omega$, where $\omega\in\uc\setminus\{0\}$ is the source element for the strong source condition (\ref{eq:ssc}), i.e. $K^*K\omega\in\partial J(u^\dagger)$ for some $u^{\dagger}\in\dom(J)$. The first non-trivial solution of the ISS flow under this assumption for the data is then characterised via the following lemma.
\begin{lemma}
\label{lemma: SSCsolution}
Let (\ref{eq:ssc}) be satisfied, i.e. there exists a source element $\omega\in\uc\setminus\{0\}$ such that $K^*K\omega\in\partial J(u^{\dagger})$. Let $f=K\omega$ be the given data for the ISS flow (\ref{eq: ISSflow}). Then a first non-trivial solution of the flow is given by
\begin{align*}
u(t_1) = \frac{J(u^{\dagger})}{\norm{Ku^{\dagger}}_{\hc}^{2}}u^{\dagger},\; t_1 = 1.
\end{align*}
\end{lemma}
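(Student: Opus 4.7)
The plan is to exhibit explicit expressions for $(u(t), p(t))$ on $[0,1]$ and verify the defining properties of the ISS flow (\ref{eq: ISSflow}). Writing $c := J(u^\dagger)/\norm{Ku^\dagger}_{\hc}^{2}$, the candidate is $p(t) = t K^*K\omega$ on the whole interval, together with $u(t) \equiv 0$ on $[0,1)$ and $u(1) = c\, u^\dagger$. So $p$ is continuous and $u$ is piecewise constant with a single jump at $t_1 = 1$. What must be checked is the ODE $\partial_t p = K^*(f - Ku)$ together with the inclusion $p(t) \in \partial J(u(t))$ at every time.

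On the trivial interval $[0,1)$, the ODE with $u \equiv 0$ reduces to $\partial_t p = K^*K\omega$, which integrates to exactly $p(t) = t K^*K\omega$. For the inclusion $p(t) \in \partial J(0)$, two structural facts about absolutely one-homogeneous functionals suffice. First, the characterisation (\ref{eq: subdiff_characterization}) immediately implies $\partial J(u^\dagger) \subseteq \partial J(0)$, so the strong source condition (\ref{eq:ssc}) places $K^*K\omega$ in $\partial J(0)$. Second, $\partial J(0)$ is star-shaped around the origin: if $\ip{q}{v} \leq J(v)$ for all $v$, then $\ip{tq}{v} \leq J(v)$ for $t \in [0,1]$ because $J(v) \geq 0$. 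Combining these two observations yields $t K^*K\omega \in \partial J(0)$ throughout $[0,1]$.

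It remains to verify the inclusion at $t_1 = 1$ and to pin down the scaling constant $c$. The inclusion $p(1) = K^*K\omega \in \partial J(cu^\dagger) = \partial J(u^\dagger)$ is immediate from (\ref{eq:ssc}) combined with the positive-scaling invariance (\ref{eq: subdiff_scaling_invariant}) of the subdifferential. The specific value of $c$ is then forced by the necessary orthogonality condition of Proposition \ref{prop: general_orthogonality_cond}, namely $\ip{K^*(f - Ku(1))}{u(1)} = 0$. Inserting $u(1) = c u^\dagger$ and using $\ip{K^*K\omega}{u^\dagger} = J(u^\dagger)$ (from the first characterising relation in (\ref{eq: subdiff_characterization}) applied to the SSC subgradient), this reduces to $c J(u^\dagger) = c^2 \norm{Ku^\dagger}_{\hc}^{2}$, from which the Rayleigh-type ratio $c = J(u^\dagger)/\norm{Ku^\dagger}_{\hc}^{2}$ drops out.

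The main conceptual subtlety, in my view, is simply to recognise that $c$ is not a free parameter: it is the unique positive constant that makes the orthogonality condition hold, which explains the particular form appearing in the lemma. Everything else is a direct assembly of the subdifferential characterisation, positive-scaling invariance, and the star-shapedness of $\partial J(0)$ — all consequences of absolute one-homogeneity that have been developed in the Preliminaries.
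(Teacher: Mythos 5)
Your proposal is correct and follows essentially the same route as the paper's own proof: integrate the flow to get $p(t)=tK^*K\omega$, use the subdifferential characterisation \eqref{eq: subdiff_characterization} together with $J\ge 0$ to place $tK^*K\omega$ in $\partial J(0)$ for $t\le 1$, invoke the positive-scaling invariance \eqref{eq: subdiff_scaling_invariant} at $t_1=1$, and fix the constant via the necessary orthogonality condition of Proposition \ref{prop: general_orthogonality_cond}. Your explicit remarks on the star-shapedness of $\partial J(0)$ and on $c$ being forced (rather than chosen) are just slightly more careful phrasings of steps the paper performs implicitly.
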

\begin{proof}
Starting with $u(t) = 0$ for $0\leq t < t_1$ we need to compute $t_1$ and $u(t_1)$. By the ISS flow we get
\begin{align*}
\partial_t p(t) = K^*f = K^*K\omega,\; 0\leq t < t_1.
\end{align*}
Using $p(0) = 0$ and integrating $\int_{0}^{t} \partial_\tau p(\tau) d\tau$ we get
\begin{align*}
p(t) = tK^*K\omega, \; 0\leq t < t_1.
\end{align*}
We need to show that $p(t)\in\partial J(u(t)) = \partial J(0)$ for $0\leq t < t_1$.  Using the strong source condition and the characterisation (\ref{eq: subdiff_characterization}) of the subdifferential, we obtain
\begin{align*}
\ip{p(t)}{v} = t\ip{K^*K\omega}{v} \leq tJ(v) < t_1J(v), \quad \forall v\in\uc.
\end{align*}
Hence choosing $t_1 = 1$ we know that $p(t)\in\partial J(0)$ for $0\leq t<t_1$. At the time $t_1$ we require $p(t_1) = t_1K^*K\omega = K^*K\omega\in\partial J(u(t_1))$. Hence choosing $u(t_1) = \gamma u^{\dagger}$ for $\gamma \geq 0$ the condition is satisfied by the positive scaling invariance (\ref{eq: subdiff_scaling_invariant}) of the subdifferential. We use the necessary orthogonality condition (\ref{eq: general_orthogonality_cond}) to determine $\gamma$:
\begin{align*}
0 = \ip{K^*(K\omega-\gamma Ku^{\dagger})}{u^{\dagger}} =  \ip{K^*K\omega}{u^{\dagger}} - \gamma\norm{Ku^{\dagger}}_{\hc}^2 = J(u^{\dagger})-\gamma\norm{Ku^{\dagger}}_{\hc}^2,
\end{align*}
i.e. $\gamma = J(u^{\dagger})/\norm{Ku^{\dagger}}_{\hc}^2 \geq 0$. Hence a first non-trivial step of the ISS flow is
\begin{align*}
u(t_1) = \frac{J(u^{\dagger})}{\norm{Ku^{\dagger}}_{\hc}^{2}}u^{\dagger},\quad t_1 = 1.
\end{align*}
\end{proof}

\subsection{Source Condition}
We emphasise the connection between the first non-trivial solution of the ISS flow (\ref{eq: ISSflow}) and the source condition \eqref{eq:sc} for arbitrary data $f$. It turns out that the first solution satisfies the source condition, and under the special kind of source condition \eqref{eq:noisecond} the first non-trivial solution is also a singular vector. \\

We show that the first non-trivial solution of the ISS flow satisfies the source condition (\ref{eq:sc}). Since $u(t) = 0$ for $0\leq t < t_1$ for some $t_1 > 0$, using the ISS flow we get
\begin{align*}
\partial_t p(t) = K^*f,\; 0\leq t < t_1.
\end{align*}
Using $p(0) = 0$ and integration yields
\begin{align*}
p(t) = tK^*f,\; 0\leq t < t_1.
\end{align*}
At $t=t_1$ we then require
\begin{align*}
p(t_1) = t_1K^*f \in \partial J(u(t_1))
\end{align*}
by the definition of the ISS flow. Setting $v=t_1f\in\hc$ we obtain $K^*v\in\partial J(u(t_1))$ which is exactly the source condition. Hence $u(t_1)$ satisfies the source condition. \\ 

In Theorem \ref{thm:noisyiss} it has been shown that for data given as $f = \gamma Ku_\lambda + g$, for some $g \in \hc$, the first non-trivial solution of the ISS solution at time $t_1 = \lambda\eta/(\lambda + \gamma \eta - \mu) < t_2 = \eta$ is given via $u(t_1) = (\gamma + (\lambda - \mu)/\eta) u_\lambda$ if the condition \eqref{eq:noisecond} for some $\gamma > \mu/\eta$ is met. We also want to emphasise that \eqref{eq:noisecond} is in fact nothing but the source condition \eqref{eq:sc} for the particular choice $v = \mu Ku_\lambda + \eta g$. So we do know from Theorem \ref{thm:noisyiss} that under this specific kind of source condition the first non-trivial ISS solution is indeed a generalised singular vector. This can be of practical interest if we have arbitrary data $f$ that can be decomposed into a singular vector and an element $g$ such that \eqref{eq:noisecond} is satisfied. However, one has to be very careful that the conditions for a successful application of Theorem \ref{thm:noisyiss} are actually met, as the following example shows.
\begin{example}
Let us consider the embedding operator $K = I:\text{BV}([0, 1]) \rightarrow L^2([0, 1])$ and the one-dimensional total variation regularisation 
\begin{align*}
J(u) = \tv(u) := \sup_{\substack{\varphi \in C_0^\infty([0, 1])\\ \| \varphi \|_{L^\infty([0, 1])} \leq 1}} \int_0^1 u(x) \varphi^\prime(x) \, dx \, \text{.}
\end{align*}
Given the function $f(x) = -\cos(2\pi x)$ it is quite obvious that we can decompose $f$ into $f = \gamma u_\lambda + g$ for a constant $\gamma>0$, 
\begin{align*}
u_\lambda(x) = \begin{cases} 1, & x \in \left[\frac{1}{4}, \frac{3}{4}\right]\\ -1, & \text{else} \end{cases} \quad \text{and} \quad g(x) = \begin{cases} -\gamma - \cos(2\pi x), & x \in \left[\frac{1}{4}, \frac{3}{4}\right]\\ \gamma - \cos(2\pi x), & \text{else} \end{cases} \, .
\end{align*}
Hence, if we could verify \eqref{eq:noisecond} for two constants $\mu$ and $\eta$ with $\mu/\eta < \gamma$, we would know from Theorem \ref{thm:noisyiss} that $(\gamma + (\lambda-\mu)/\eta)u_\lambda$ would be the first non-trivial solution of the ISS flow. In order to do so we have to characterise the subdifferential of $\tv$, which reads as
\begin{align*}
\partial \tv(u) = \left\{ q^\prime \ | \ \| q \|_{L^\infty([0, 1])} \leq 1, \,  q(0) = q(1) = 0, \ \text{and} \  \langle q^\prime,u \rangle = \tv(u) \right\} \, .
\end{align*}
In our case we therefore have to find constants $\mu$ and $\eta$ and a function $q$ with $q^\prime = \mu u_\lambda + \eta g$, $q(0) = q(1) = 0$, $\| q \|_{L^\infty([0, 1])} \leq 1$ and $\langle q^\prime,\ul \rangle = \tv(\ul)$. We make the assumption $\eta = 2\pi$ and verify that we can find a function $q$ that satisfies all conditions listed above for some $\mu$. We do so by considering $q(x) := -\sin(2\pi x)$. We immediately observe $q(0) = q(1) = 0$ and $\| q \|_{L^\infty([0, 1])} = 1$. For the choice $\eta = 2\pi$ we further compute
\begin{align*}
q^\prime(x) = -2\pi \cos(2\pi x) = \mu \ul(x) + \eta \begin{cases} -\frac{\mu}{\eta} - \cos(2\pi x), & x \in \left[\frac{1}{4}, \frac{3}{4}\right]\\ \frac{\mu}{\eta} - \cos(2\pi x), & \text{else}\end{cases} \, .
\end{align*}
For the choice $\mu = \gamma\eta$ we then obtain $-q^\prime = \mu \ul + \eta w$, and further observe $\langle q^\prime, \ul\rangle = 4 = \tv(\ul)$. Hence, we have verified $\mu \ul + \eta w \in \partial \tv(\ul)$ for $\eta = 2\pi$ and $\mu = \gamma\eta$. Unfortunately, we do not obtain $\mu/\eta < \gamma$, which means that we cannot apply Theorem \ref{thm:noisyiss} to automatically conclude that the first non-trivial solution of the ISS flow is given via a singular vector, though it seems tempting at first glance. 
\end{example}

\subsection{Arbitrary input data}
In this section we want to characterise the first non-trivial solution of the ISS flow for arbitrary given data. We investigate under which conditions the first solution is guaranteed to be a singular vector. This investigation leads to the definition of dual singular vectors in the data space. \\

Under the norm-inequality assumption (\ref{eq: NormIneqAssumption}) for $J$ we can characterise the first non-trivial solution of the ISS flow (\ref{eq: ISSflow}) via the following lemma.
\begin{lemma}[First Non-trivial Solution of ISS Flow] 
\label{lemma: FirstNonTrivialSol}
Assume that $J$ satisfies the norm-inequality assumption (\ref{eq: NormIneqAssumption}). Let $f\in\hc\setminus \ker(K^*)$ be arbitrary given data for the ISS flow (\ref{eq: ISSflow}) satisfying $\ip{K^*f}{v} = 0$ for all $v\in \ker(J)$. Then the first non-trivial solution of the flow is characterised by
\begin{align*}
u(t_1) = c_1u_1
\end{align*}
where $t_1=c_0/\norm{K^*f}_{\uc^*}$ ($c_0$ being the norm-inequality constant), $c_1 = J(u_1)/t_1$ and $u_1 = v_1/\norm{Kv_1}_{\hc}$ with
\begin{align}
v_1\in\argmin_{v\in\ker(J)^{\perp}} \left\{J(v)-t_1\ip{K^*f}{v}\right\}.
\label{eq: FirstISSsol}
\end{align}
\end{lemma}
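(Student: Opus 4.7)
The plan is to adapt the argument used for Lemma \ref{lemma: SSCsolution}, splitting the analysis into the interval $[0, t_1)$ where $u \equiv 0$ and the instant $t = t_1$ where the first non-trivial jump occurs. First I would assume $u(t) \equiv 0$ on $[0, t_1)$, so that the ISS flow reduces to $\partial_t p(t) = K^*f$ with $p(0) = 0$, giving $p(t) = t K^*f$. The inclusion $p(t) \in \partial J(0)$ then governs how long this trivial branch persists. Since by hypothesis $\langle K^*f, v \rangle = 0$ for every $v \in \ker(J)$ and $J$ satisfies the norm-inequality assumption, Proposition \ref{prop: DualNormFormOfSubdiff} ensures that $t K^*f \in \partial J(0)$ as long as $\| t K^*f \|_{\uc^*} \leq c_0$. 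This forces the maximal value $t_1 = c_0 / \| K^*f \|_{\uc^*}$, which is strictly positive and finite because $f \notin \ker(K^*)$.

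At $t = t_1$ the subgradient $p(t_1) = t_1 K^*f$ sits on the boundary of $\partial J(0)$, so the trivial branch can cease only when a non-zero $u(t_1)$ becomes available with $\langle t_1 K^*f, u(t_1)\rangle = J(u(t_1))$; the subgradient inequality $\langle t_1 K^*f, w\rangle \leq J(w)$ for all $w \in \dom(J)$ is already in force by Step 1, so by the characterisation (\ref{eq: subdiff_characterization}) of the subdifferential of an absolutely one-homogeneous functional, the equality at $u(t_1)$ is the only missing ingredient. This equality is precisely the statement that $u(t_1)$ attains the minimum of the non-negative functional $w \mapsto J(w) - t_1\langle K^*f, w\rangle$, which is the variational problem (\ref{eq: FirstISSsol}) restricted to $\ker(J)^\perp$. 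Given a non-trivial minimiser $v_1$, I would set $u_1 := v_1/\|Kv_1\|_{\hc}$ (well-defined since $\ker(K)\cap\ker(J) = \{0\}$ and $v_1 \notin \ker(J)$) and use the ansatz $u(t_1) = c_1 u_1$.

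By the positive scaling invariance (\ref{eq: subdiff_scaling_invariant}) of the subdifferential, $\partial J(c_1 u_1) = \partial J(u_1) = \partial J(v_1)$ for any $c_1 > 0$, and since $t_1 K^*f \in \partial J(0)$ together with $\langle t_1 K^*f, v_1\rangle = J(v_1)$ yields $t_1 K^*f \in \partial J(v_1)$ through (\ref{eq: subdiff_characterization}), the jump $u(t_1) = c_1 u_1$ is compatible with the ISS flow for any positive $c_1$. The correct scaling is pinned down by the necessary orthogonality condition of Proposition \ref{prop: general_orthogonality_cond}: the equation $\langle K^*(f - c_1 Ku_1), c_1 u_1\rangle = 0$ reduces, after using $\|Ku_1\|_{\hc} = 1$ and $\langle K^*f, v_1\rangle = J(v_1)/t_1$, to $c_1 = \langle K^*f, u_1\rangle = J(u_1)/t_1$, matching the claimed formula.

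The main obstacle I expect is verifying that (\ref{eq: FirstISSsol}) actually admits a non-trivial minimiser. Since the objective is positively one-homogeneous and non-negative on $\ker(J)^\perp$, its infimum equals zero and is trivially attained at the origin; what really has to be argued is that the zero level set intersects $\ker(J)^\perp$ in more than just $\{0\}$. Heuristically this is ensured because at $t = t_1$ the dual-norm bound $\|t_1 K^*f\|_{\uc^*} \leq c_0$ is saturated, so there must exist directions $v$ along which $\langle t_1 K^*f, v\rangle$ meets the bound $c_0 \|v\|_{\uc} \leq J(v)$ with equality; rigorously, attainment requires either finite dimensionality or suitable weak-$*$ compactness together with the lower semi-continuity of $J$. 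Once the minimiser is in hand, the rest of the verification is essentially bookkeeping with the tools already recalled in the paper.
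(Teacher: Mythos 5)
Your proposal is correct and follows essentially the same route as the paper: integrate the flow on $[0,t_1)$ to get $p(t)=tK^*f$, use Proposition \ref{prop: DualNormFormOfSubdiff} to keep $p(t)\in\partial J(0)$ up to $t_1=c_0/\norm{K^*f}_{\uc^*}$, identify $t_1K^*f\in\partial J(v_1)$ with the optimality condition of \eqref{eq: FirstISSsol}, and fix $c_1=J(u_1)/t_1$ via the necessary orthogonality condition of Proposition \ref{prop: general_orthogonality_cond}. Your closing remarks on the attainment of a non-trivial minimiser in \eqref{eq: FirstISSsol} go beyond the paper, which tacitly assumes existence; that is a fair observation but not a deviation in method.
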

\begin{proof}
For $0\leq t < t_1$ we have $u(t) = 0$. The ISS flow then gives
\begin{align*}
\partial_t p(t) = K^*f,\; 0\leq t < t_1.
\end{align*}
Using $p(0) = 0$ and integration we obtain $p(t) = tK^*f$ for $0\leq t < t_1$. This implies
\begin{align*}
\norm{p(t)}_{\uc^*} = t\norm{K^*f}_{\uc^*} < t_1\norm{K^*f}_{\uc^*} = c_0.
\end{align*}
Together with the assumption that $\ip{K^*f}{v} = 0$ for all $v\in\ker(J)$, Proposition \ref{prop: DualNormFormOfSubdiff} then implies $p(t)\in\partial J(0)$. Hence the ISS flow is satisfied for $0\leq t < t_1$. At $t = t_1$ we need $p(t_1) = t_1K^*f\in\partial J(c_1u_1) = \partial J(u_1) = \partial J(v_1)$ for $c_1u_1$ to be the solution at time $t_1$. This is exactly the optimality condition of
\begin{align*}
v_1\in\argmin_{v\in\ker(J)^{\perp}} \left\{J(v)-t_1\ip{K^*f}{v}\right\},
\end{align*}
since the Lagrange multiplier for the constraint $v\in\ker(J)^{\perp}$ is zero following the same procedure as of that for the generalised singular vectors in Section \ref{sec: Preliminaries}. Finally, we require $u(t_1)$ to satisfy the necessary orthogonality condition (\ref{eq: general_orthogonality_cond}):
\begin{align*}
0 = \ip{K^*(f-Ku(t_1))}{u(t_1)} = c_1\ip{K^*f}{u_1} - c_1^2 = c_1\left(\frac{J(u_1)}{t_1} -c_1\right).
\end{align*}
We have used that $t_1K^*f\in\partial J(u_1)$ for the last equality. From the above we obtain
\begin{align*}
c_1 = \frac{J(u_1)}{t_1}.
\end{align*}
\end{proof}
The interesting question now is when the first ISS solution $u_1$ characterised via (\ref{eq: FirstISSsol}) is a singular vector. We need to investigate if $J(u_1)K^*Ku_1\in\partial J(u_1)$. Using the characterisation of the subdifferential (\ref{eq: subdiff_characterization}) we need to show
\begin{align*}
\ip{J(u_1)K^*Ku_1}{u_1} = J(u_1) \quad\text{and}\quad J(u_1)K^*Ku_1\in\partial J(0).
\end{align*}
The first condition is easy to verify since $\norm{Ku_1}_{\hc} = 1$. For the second condition we use Proposition \ref{prop: DualNormFormOfSubdiff}, thus we need $\norm{J(u_1)K^*Ku_1}_{\uc^*} \leq c_0$. For $J(u_1)$ we use the estimate
\begin{align*}
J(u_1) = t_1\ip{K^*f}{u_1} = \frac{c_0}{\norm{K^*f}_{\uc^*}}\ip{f}{Ku_1} \leq \frac{c_0}{\norm{K^*f}_{\uc^*}}\norm{f}_{\hc}\norm{Ku_1}_{\hc} = c_0\frac{\norm{f}_{\hc}}{\norm{K^*f}_{\uc^*}}.
\end{align*}
Then we obtain
\begin{align*}
\norm{J(u_1)K^*Ku_1}_{\uc^*} \leq c_0\frac{\norm{f}_{\hc}}{\norm{K^*f}_{\uc^*}}\norm{K^*}\norm{Ku_1}_{\hc} = c_0\frac{\norm{f}_{\hc}\norm{K^*}}{\norm{K^*f}_{\uc^*}},
\end{align*}
where $\norm{K^*}$ is the standard operator norm
\begin{align*}
\norm{K^*} = \sup_{\omega\in\hc\setminus\{0\}} \frac{\norm{K^*\omega}_{\uc^*}}{\norm{\omega}_{\hc}}.
\end{align*} 
Hence we see that if $\norm{K^*f}_{\uc^*} = \norm{K^*}\norm{f}_{\hc}$ then the first ISS solution $u_1$ is actually a singular vector of $J$. We note here that one should be very careful using the correct norms. To emphasise this we consider the following example:
\begin{example}
\label{example: DualSingVec}
Let $K = I: H^1_0(\Omega) \to L^2_0(\Omega)$ be an imbedding operator and $\Omega$ a bounded domain in $\R^2$. Here subscript zero means that the functions are zero on the boundary of $\Omega$. Let $J(u) = \norm{\nabla u}_{L^2(\Omega)}$. Using Poincar\'e's inequality $\norm{u}_{L^2(\Omega)} \leq C\norm{\nabla u}_{L^2(\Omega)}$ for some $C>0$ and $u\in\ker(J)^{\perp}$, we can verify that $J$ satisfies the norm-inequality assumption (\ref{eq: NormIneqAssumption}) for $c_0 = 1/(1+C^2)^{1/2}$. Let $f\in H^1(\Omega)$ be the data for the ISS flow. If $f$ satisfies
\begin{align*}
\norm{I^*f}_{H^1(\Omega)} = \norm{I^*}\norm{f}_{L^2(\Omega)} = \norm{f}_{L^2(\Omega)},
\end{align*}
the first non-trivial solution of the ISS flow is a singular vector. The equality is not always satisfied even though $K$ is an embedding operator. We see that we need
\begin{align*}
\left(\norm{f}^2_{L^2(\Omega)}+\norm{\nabla f}^2_{L^2(\Omega)}\right)^{1/2} = \norm{f}_{L^2(\Omega)} \quad \Leftrightarrow\quad \norm{\nabla f}_{L^2(\Omega)} = 0.
\end{align*}
\end{example}
The condition $\norm{K^*f}_{\uc^*} = \norm{K^*}\norm{f}_{\hc}$ gives rise to the following theorem:
\begin{theorem}
\label{thm: FirstISSsolIsSingVec}
Assume that $J$ satisfies the norm-inequality assumption (\ref{eq: NormIneqAssumption}). If $f\in\hc\setminus\ker(K^*)$ is a \emph{dual singular vector}, i.e.
\begin{align}
\frac{\norm{K^*f}_{\uc^*}}{\norm{f}_{\hc}^2}f\in\partial\norm{K^*f}_{\uc^*},
\label{eq: dualSV}
\end{align}
and satisfies $\ip{K^*f}{v} = 0 $ for all $v\in\ker(J)$, then the first non-trivial solution $u_1$ of the ISS flow (\ref{eq: ISSflow}) characterised via (\ref{eq: FirstISSsol}) is a (primal) singular vector of $J$.
\end{theorem}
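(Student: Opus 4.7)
The plan is to invoke Lemma \ref{lemma: FirstNonTrivialSol} to identify the first non-trivial ISS solution $u_1$ and then directly check that $u_1$ satisfies the generalised singular vector condition. Since $\|Ku_1\|_{\hc} = 1$, that condition reduces to showing
\[
J(u_1)\,K^*Ku_1 \in \partial J(u_1).
\]
From Lemma \ref{lemma: FirstNonTrivialSol} we already have $t_1 = c_0/\|K^*f\|_{\uc^*}$, $t_1 K^*f \in \partial J(u_1)$, and consequently $J(u_1) = t_1 \ip{f}{Ku_1}$.

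By the characterisation \eqref{eq: subdiff_characterization}, two ingredients are needed. First, $\ip{J(u_1) K^*Ku_1}{u_1} = J(u_1)\|Ku_1\|_{\hc}^2 = J(u_1)$ is immediate. Second, $J(u_1) K^*Ku_1 \in \partial J(0)$: for this I would apply Proposition \ref{prop: DualNormFormOfSubdiff}. The kernel orthogonality $\ip{K^*Ku_1}{v} = \ip{Ku_1}{Kv} = 0$ for $v \in \ker(J)$ is automatic from $u_1 \in \ker(J)^\perp$, so the entire task reduces to establishing the dual-norm bound
\[
J(u_1)\,\|K^*Ku_1\|_{\uc^*} \leq c_0.
\]

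To get this bound I would exploit the dual singular vector hypothesis. Because $g(h) := \|K^*h\|_{\uc^*}$ is itself an absolutely one-homogeneous functional on $\hc$, the inclusion $(\|K^*f\|_{\uc^*}/\|f\|_{\hc}^2) f \in \partial g(f)$ is equivalent, by the analogue of \eqref{eq: subdiff_characterization} applied to $g$, to the pointwise inequality
\[
\frac{\|K^*f\|_{\uc^*}}{\|f\|_{\hc}^2}\,\ip{f}{h} \leq \|K^*h\|_{\uc^*} \qquad \text{for every } h \in \hc,
\]
with equality at $h = f$. Combined with the identity $J(u_1) = (c_0/\|K^*f\|_{\uc^*})\,\ip{f}{Ku_1}$ coming from Lemma \ref{lemma: FirstNonTrivialSol}, this should produce the required bound after algebraic rearrangement.

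The step I expect to be the main obstacle is converting this inequality into an \emph{upper} bound on $\|K^*Ku_1\|_{\uc^*}$: a naive substitution $h = Ku_1$ only yields a lower bound. The missing ingredient should be that $u_1$, as the normalised argmin in \eqref{eq: FirstISSsol}, forces $Ku_1$ to lie in the direction where the dual singular vector inequality is tight. Concretely, the crux of the proof should be the auxiliary identity $Ku_1 = f/\|f\|_{\hc}$, which one can verify directly in the smooth finite-dimensional setting and which turns the dual singular vector inequality into an equality at the relevant point. Once this identity is in place, the norm bound follows from exactly the chain of inequalities given in the paragraph preceding the theorem, with the operator norm $\|K^*\|$ effectively replaced by the tight ratio $\|K^*f\|_{\uc^*}/\|f\|_{\hc}$.
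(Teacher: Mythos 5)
Your setup coincides with the paper's: identify $u_1$ via Lemma \ref{lemma: FirstNonTrivialSol}, reduce the singular vector condition to $J(u_1)K^*Ku_1\in\partial J(0)$ via \eqref{eq: subdiff_characterization}, and apply Proposition \ref{prop: DualNormFormOfSubdiff}, so that everything hinges on the bound $J(u_1)\norm{K^*Ku_1}_{\uc^*}\leq c_0$. But precisely at that point your argument has a genuine gap. You do not establish the bound; instead you defer to the auxiliary identity $Ku_1 = f/\norm{f}_{\hc}$, which you neither prove nor could prove in general --- $u_1$ is the normalised minimiser of \eqref{eq: FirstISSsol}, and there is no reason for $Ku_1$ to be parallel to $f$ (already for $f=\gamma Ku_\lambda + g$ as in Theorem \ref{thm:noisyiss} the first solution is a multiple of $u_\lambda$ while $f$ is not). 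Moreover, your diagnosis that "a naive substitution only yields a lower bound" sends you in the wrong direction: the upper bound on $\norm{K^*Ku_1}_{\uc^*}$ does not come from the dual singular vector inequality at all.

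The paper's route is more elementary. One bounds the two factors separately: $\norm{K^*Ku_1}_{\uc^*}\leq\norm{K^*}\,\norm{Ku_1}_{\hc}=\norm{K^*}$ by the operator norm, and
\begin{align*}
J(u_1) \;=\; t_1\ip{f}{Ku_1} \;\leq\; \frac{c_0}{\norm{K^*f}_{\uc^*}}\,\norm{f}_{\hc}\,\norm{Ku_1}_{\hc} \;=\; c_0\,\frac{\norm{f}_{\hc}}{\norm{K^*f}_{\uc^*}}
\end{align*}
by Cauchy--Schwarz, giving $J(u_1)\norm{K^*Ku_1}_{\uc^*}\leq c_0\,\norm{f}_{\hc}\norm{K^*}/\norm{K^*f}_{\uc^*}$. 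The dual singular vector condition \eqref{eq: dualSV} enters only as the optimality condition of $f\in\argmax_{\omega}\norm{K^*\omega}_{\uc^*}/\norm{\omega}_{\hc}$, i.e.\ it is used to conclude $\norm{K^*f}_{\uc^*}=\norm{K^*}\,\norm{f}_{\hc}$, which collapses the product to $c_0$. To repair your proof you should replace the unproven identity by this operator-norm argument and the identification of \eqref{eq: dualSV} with $f$ attaining the norm of $K^*$; the subdifferential inequality $\frac{\norm{K^*f}_{\uc^*}}{\norm{f}_{\hc}^2}\ip{f}{h}\leq\norm{K^*h}_{\uc^*}$ that you derived is correct but is not the mechanism that closes the estimate.
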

\begin{proof}
We have shown above that if $f$ satisfies $\norm{K^*f}_{\uc^*} = \norm{K^*}\norm{f}_{\hc}$, then $u_1$ is a singular vector. Since
\begin{align*}
\norm{K^*} = \sup_{\omega\in\hc} \frac{\norm{K^*\omega}_{\uc^*}}{\norm{\omega}_{\hc}}
\end{align*}
we see that in order for $\norm{K^*f}_{\uc^*} = \norm{K^*}\norm{f}_{\hc}$ to hold true, $f$ has to satisfy
\begin{align*}
f\in\argmax_{\omega\in\hc} \frac{\norm{K^*\omega}_{\uc^*}}{\norm{\omega}_{\hc}}.
\end{align*}
The optimality condition for this is
\begin{align*}
0\in \frac{\partial\norm{K^*f}_{\uc^*}\norm{f}_{\hc}-\norm{K^*f}_{\uc^*}\frac{f}{\norm{f}_{\hc}}}{\norm{f}_{\hc}^2}
\end{align*}
which implies
\begin{align*}
\frac{\norm{K^*f}_{\uc^*}}{\norm{f}_{\hc}^2}f\in\partial\norm{K^*f}_{\uc^*}.
\end{align*}
\end{proof}
\begin{remark}
Setting $J_2(\omega) = \norm{K^*\omega}_{\uc^*}$, $\omega\in\hc$, we see from the above that $f$ has to be a singular vector of $J_2$ (and $I:\hc\to\hc$). $J_2$ is absolutely one-homogeneous and hence we can reuse all the results for absolutely one-homogeneous functionals.
\end{remark}




\section{Numerical Results}\label{sec:results}
We conclude the paper with numerical results that demonstrate the sufficiency of the \eqref{eq: subdiff_cond} condition when we wish to decompose a linear combination of generalised singular vectors. In order to verify the theoretical results provided in Section \ref{sec:decomp}, we need to be able to solve the ISS flow (\ref{eq: ISSflow}) numerically. In \cite{burger2013adaptive} it has been shown that solutions of (\ref{eq: ISSflow}) can be computed directly in case of $J(u) = \| u  \|_{\ell^1(\R^m)}$ via what has been termed the adaptive inverse scale space method (aISS). In \cite{moeller2013multiscale} the aISS has been extended to general polyhedral functionals $J$ and could therefore also be used to numerically verify the behaviour of functionals like the anisotropic total variation, for example. For arbitrary convex, absolutely one-homogeneous functionals $J$, however, there does not exist a way to efficiently compute the exact ISS solution at each time step numerically to the best of our knowledge. Therefore, for simplicity, we are going to restrict ourselves to the case of $J(u) = \| u \|_{\ell^1(\R^n)}$ for the numerical results. We compute the inverse scale space solution for a composition satisfying the (\ref{eq: subdiff_cond}) condition. We also demonstrate the need for the (\ref{eq: subdiff_cond}) condition computing ISS solutions of compositions following Examples \ref{example: l1_convolution} and \ref{example: PartialSUB0Necessary}.


\subsection{\eqref{eq: subdiff_cond} Observance}
\label{sec: SUB0observance}
With our first example we simply verify that the result of Theorem \ref{thm: ExactReconstruction} is observed in practice. In order to do so, we consider the operator $K:\ell^1(\R^8)\supseteq\ell^2(\R^8) \to \ell^2(\R^8)$ defined by $(Ku)_k = \sum_{j=-1}^{1} u_{k-j}g_j$ for $g = (1,1,0)^T / \sqrt{2}$ from Example \ref{example: l1_convolution}, respectively Example \ref{example: PartialSUB0Necessary}, the absolutely one-homogeneous functional $J(u) = \norm{u}_{\ell^1(\R^8)}$, and the following three $K$-normalised and $K$-orthogonal singular vectors
\begin{align*}
u_{\lambda_1} &= (0,0,0,1,-1,0,0,0)^T, \\
u_{\lambda_2} &= (0,-1,0,0,0,0,0,0)^T, \\ 
u_{\lambda_3} &= (0,0,0,0,0,0,1,0)^T,
\end{align*}
with $\lambda_1 = 2$ and $\lambda_2 = \lambda_3 = 1$. We now verify that these singular vectors satisfy the \eqref{eq: subdiff_cond} condition in the order they are defined, i.e. we have to verify $\lambda_1 K^* K u_{\lambda_1} \in \partial J(0)$, $\lambda_1 K^* K u_{\lambda_1} + \lambda_2 K^* K u_{\lambda_2} \in \partial J(0)$ and $\lambda_1 K^* K u_{\lambda_1} + \lambda_2 K^* K u_{\lambda_2} + \lambda_3 K^* K u_{\lambda_3} \in \partial J(0)$. The first condition trivially follows from $u_{\lambda_1}$ being a generalised singular vector, the second and third follow from
\begin{align*}
\sum_{j = 1}^2 \lambda_j K^* K u_{\lambda_j} &= (-1/2,-1,1/2,1,-1,-1,0,0)^T \in \partial J(0), \\
\sum_{j = 1}^3 \lambda_j K^* K u_{\lambda_j} &= (-1/2,-1,1/2,1,-1,-1/2,1,1/2)^T \in \partial J(0).
\end{align*}
Hence, \eqref{eq: subdiff_cond} is satisfied and Theorem \ref{thm: ExactReconstruction} guarantees that \eqref{eq: ISSflow} and therefore the aISS algorithm will decompose any composition $f = \sum_{j = 1}^3 \gamma_j Ku_{\lambda_j}$ with $\lambda_1/\gamma_1 < \lambda_2/\gamma_2 < \lambda_3 / \gamma_3$ into $\gamma_1 u_{\lambda_1}$, $\gamma_1 u_{\lambda_1} + \gamma_2 u_{\lambda_2}$ and $\gamma_1 u_{\lambda_1} + \gamma_2 u_{\lambda_2} + \gamma_3 u_{\lambda_3}$. Figure \ref{fig:positivesub0exm} shows the three iterations of the aISS method until convergence, for $f = \sum_{j = 1}^3 \gamma_j Ku_{\lambda_j}$ with $\gamma_1 = 5$, $\gamma_2 = 2$ and $\gamma_3 = 1$. It can be seen that the aISS algorithm and therefore \eqref{eq: ISSflow} indeed decomposes the signal $f$ as stated by Theorem \ref{thm: ExactReconstruction}.
\begin{figure}[!ht]
       \centering
       \subfloat[First aISS solution and subgradient.]{\includegraphics[width=0.49\textwidth]{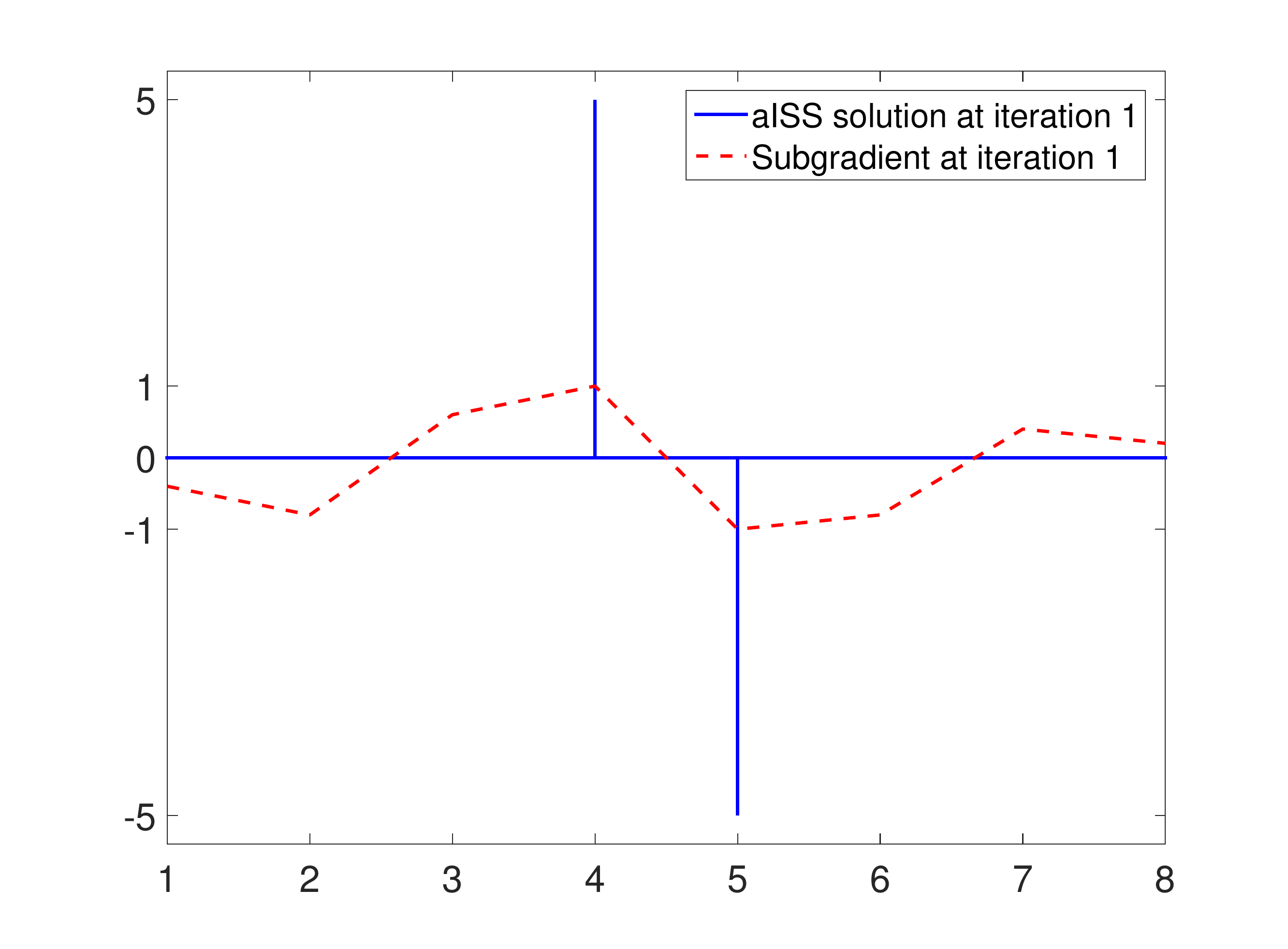}\label{subfig:aiss1}}
       \subfloat[Second aISS solution and subgradient.]{\includegraphics[width=0.49\textwidth]{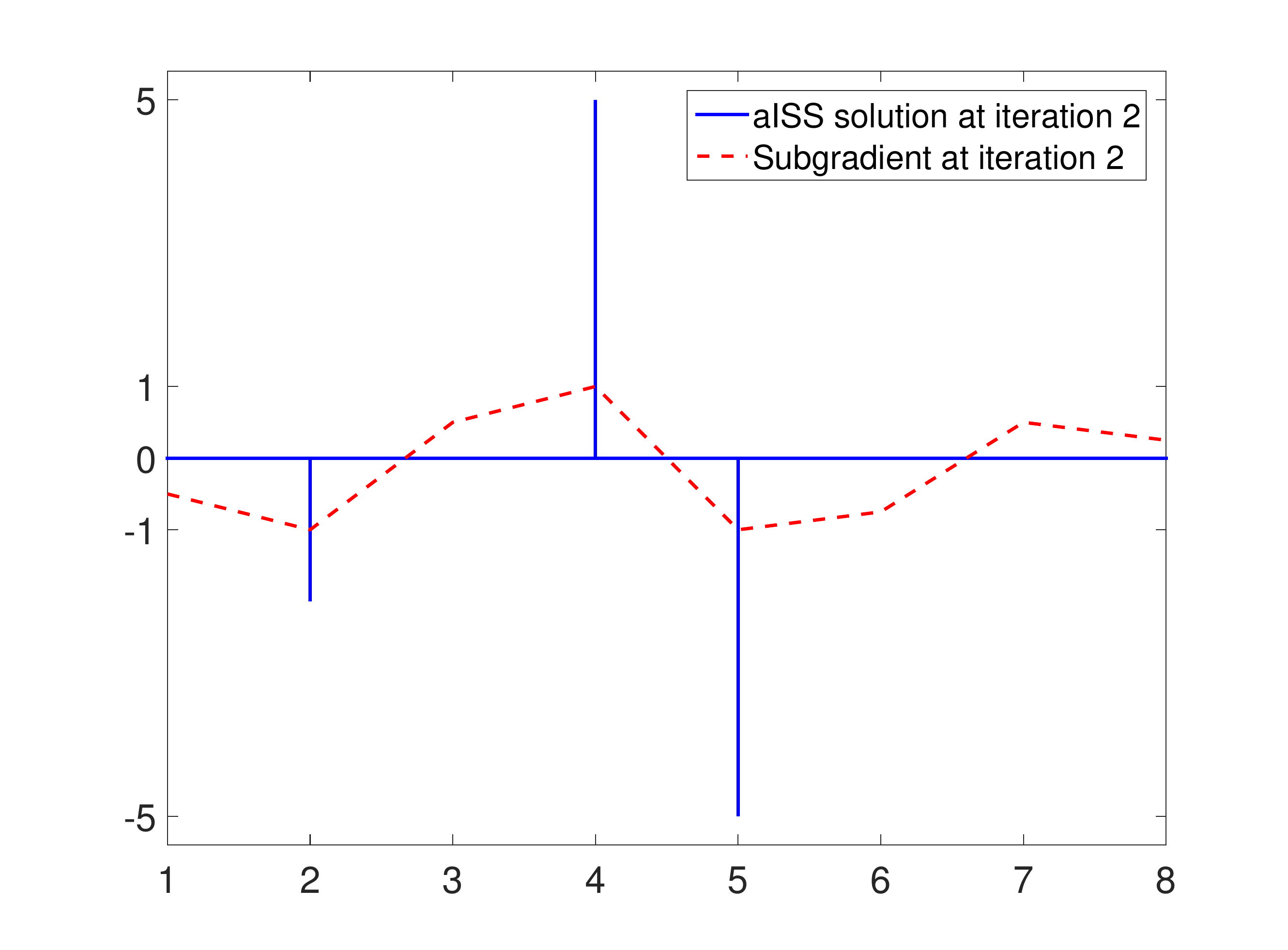}\label{subfig:aiss2}}\\
       \subfloat[Third and final aISS solution and subgradient.]{\includegraphics[width=0.49\textwidth]{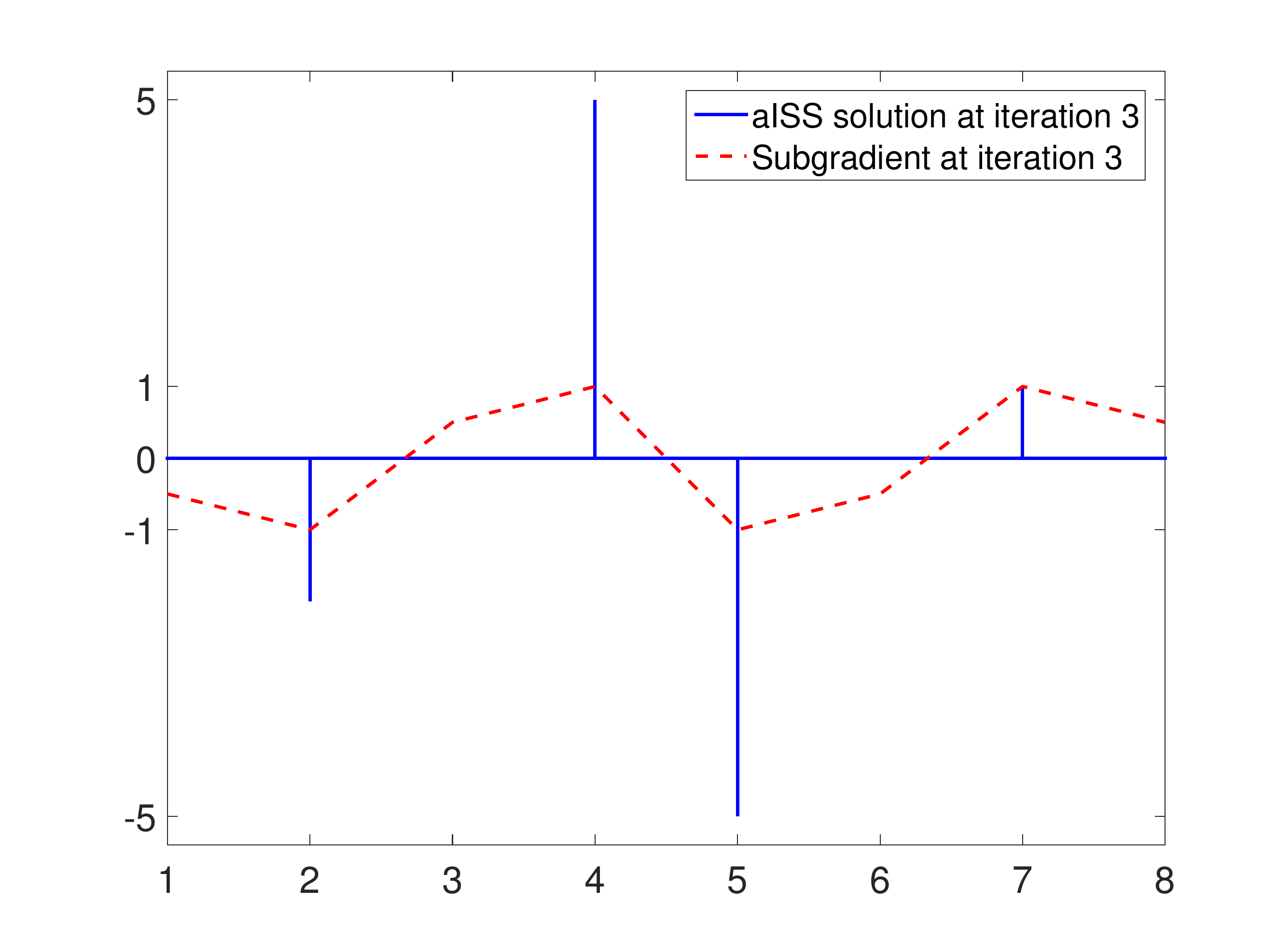}\label{subfig:aiss3}}
       \vspace{0.5cm}
\caption{The three iterations of the aISS algorithm until convergence for the example of Section \ref{sec: SUB0observance}. Figure \ref{subfig:aiss1} shows both the primal and the dual variable, i.e. $u(t_1)$, $p(t_1)$, at $t = t_1$, whereas Figure \ref{subfig:aiss2} and \ref{subfig:aiss3} show the same for $t_2$ and $t_3$, respectively. It is easily observed that the aISS algorithm has decomposed the signal $f$ into the original singular vectors with no loss of contrast, as stated by Theorem \ref{thm: ExactReconstruction}.}
\label{fig:positivesub0exm}
\end{figure}

\subsection{\eqref{eq: subdiff_cond} Violations}
\label{sec: SUB0violations}
Following Example \ref{example: l1_convolution} we also want to verify numerically that the absence of the \eqref{eq: subdiff_cond} condition in fact can lead to results different from the decomposition of the data into the original singular vectors. We use the operator $K$, the functional $J$, and the two singular vectors
\begin{align*}
u_{\lambda_1} = (0,0,1,0,0)^T \qquad \text{and} \qquad u_{\lambda_2} = \frac{1}{\sqrt{2}}(0,1,0,-1,0)^T,
\end{align*}
from example \ref{example: l1_convolution}. We know that the two singular vectors are $K$-orthogonal but violate the (\ref{eq: subdiff_cond}) condition. We can then for instance define $f = Ku_{\lambda_1} + \frac{3}{2\sqrt{2}} Ku_{\lambda_2}$ as potential input data that we want to decompose. Note that we have ensured $\lambda_1/\gamma_1 = 1 <  4/3 = \lambda_2/\gamma_2$. In order to compute a solution of the ISS flow \eqref{eq: ISSflow} we again use the aISS method.
The first non-trivial solution $u(t_1)$ of the aISS method reads as
\begin{align*}
u(t_1) = (0,5/4,0,0,0)^T.
\end{align*}
This solution is obviously as sparse as $u_{\lambda_1}$ and also a singular vector (with the same singular value after normalisation) but corresponds neither to $u_{\lambda_1}$ nor to $u_{\lambda_2}$. More importantly, it is a better fit to the data $f$ with respect to the $\ell^2$-norm, as we have $\| Ku(t_1) - f \|_{\ell^2(\R^m)}^2 = 9/16$ as opposed to $\| Ku_{\lambda_1} - f \|_{\ell^2(\R^m)}^2 = 9/8$. Hence, it is logical that the first ISS solution is not $u_{\lambda_1}$. The remaining ISS solutions are
\begin{align*}
u(t_2) = (0,1,1/2,0,0)^T \qquad \text{and} \qquad u(t_3) = (0,3/4,1,-3/4,0)^T,
\end{align*}
where $u(t_3)$ satisfies $u(t_3) = u_{\lambda_1} +  \frac{3}{2\sqrt{2}} u_{\lambda_2}$. What is interesting about this example is that the ISS flow \eqref{eq: ISSflow}, whilst converging to a $J$-minimial solution of $K^* Ku = K^* f$, most importantly has to minimise the residual in each step. This dominates the behaviour of keeping $J$ minimal at the same time, as we observe $J(u(t_1)) > J(u_{\lambda_1})$ but $\| Ku(t_1) - f \|_{\ell^2(\R^m)} < \| Ku_{\lambda_1} - f \|_{\ell^2(\R^m)}$. \\

We can obviously apply aISS method to compute solutions for compositions made from the singular vectors described in Example \ref{example: PartialSUB0Necessary} as well. We pick $\gamma_1 = 9$, $\gamma_2 = 8$, $\gamma_3 = 3\sqrt{2}$, $\gamma_4 = 2$, and $\gamma_5 = 1$ in order to have $\lambda_k/\gamma_k < \lambda_{k+1}/\gamma_{k+1}$ for $k=1,\dots,4$. In this case the first non-trivial solution $u(t_1)$ of the aISS method is given by
\begin{align*}
u(t_1) = (0,0,0,0,-11/2,0,0,0,0)^T.
\end{align*}
The recovered peak does not match any of the singular vectors $u_{\lambda_j}$, $j=1,\dots,5$. This shows that in order for the ISS flow to give a decomposition into the singular vectors of $f$ it is not enough that the full sum of the subgradients is in the subdifferential of $J$ at zero. We need the partial sums as well. The remaining solutions are
\begin{align*}
u(t_2) &= (0,0,5,0,-11/2,0,5,0,0)^T, \\
u(t_3) &= (0,0,5/4,15/2,-37/4,0,5,0,0)^T, \\
u(t_4) &= (0,0,0,12,-17,11,0,0,0)^T = \gamma_1u_{\lambda_1} + \gamma_2u_{\lambda_2} + \gamma_3u_{\lambda_3}, \\
u(t_5) &= (0,-2,0,12,-17,11,0,0,0)^T = \gamma_1u_{\lambda_1} + \gamma_2u_{\lambda_2} + \gamma_3u_{\lambda_3} + \gamma_4u_{\lambda_4}, \\
u(t_6) &= (0,-2,0,12,-17,11,0,-1,0)^T = \gamma_1u_{\lambda_1} + \gamma_2u_{\lambda_2} + \gamma_3u_{\lambda_3} + \gamma_4u_{\lambda_4} + \gamma_5u_{\lambda_5}.
\end{align*}
The solutions at steps two and three cannot be written as any linear combination of the five singular vectors $u_{\lambda_j}$, $j=1,\dots,5$. From step four the solutions can be written as linear combinations of the singular vectors with no loss of contrast. However, we are not able to separate $u_{\lambda_1}$, $u_{\lambda_2}$, and $u_{\lambda_3}$.

\section{Conclusions \& Outlook}
In this paper we have investigated the possibility of using the inverse scale space flow as a decomposition method for absolutely one-homogeneous regularisation functionals. We have formulated conditions under which the inverse scale space flow will give a decomposition of a linear combination of generalised singular vectors. Furthermore, we have investigated the behaviour of the flow for arbitrary data. We have given a characterisation of the first non-trivial solution and shown that this solution is a primal singular vector when the input data is what we have named a dual singular vector. \\ 

For the inverse scale space decomposition we require the generalised singular vectors representing the data to be orthogonal when mapped to the data space by the forward operator. This condition, however, is quite restrictive and in future research we aim to relax the condition. The idea is that the inverse scale space flow will then still give a decomposition into the singular vectors but with time-varying coefficients. Another subject of interest is to investigate whether the data can always be decomposed into a linear combination of singular vectors and a remainder that can be seen as noise satisfying special conditions such as (\ref{eq:noisecond}). Extending the decomposition results to data containing noise in the same manner as in \cite[Section 7.2]{Benning_Burger_2013} we then know that the inverse scale space flow will always give a decomposition into singular vectors. \\ 

Another direction for future research is the numerical computation of inverse scale space solutions as well as the numerical computation of generalised singular vectors. As mentioned earlier, the former is currently restricted to polyhedral regularisation functionals (cf. \cite{burger2013adaptive,moeller2013multiscale}). For the latter there has been made substantial progress in \cite{hein2010inverse,bresson2012convergence,benning2016learning,nossek2016flows}, but many open questions need yet to be addressed. How can we compute ground states as well as non-trivial singular vectors with larger singular values? How can we incorporate (linear) forward operators? How can we compute dual singular vectors in a robust fashion? These are only few of many open questions that need to be addressed in future research.

\section*{Acknowledgments} MFS acknowledges support from the Advanced Grant No. 291405 from the European Research Council as part of the project HD-Tomo. MB and CBS acknowledge support from the EPSRC Grant EP/M00483X/1, the Leverhulme Trust Grant 'Breaking the non-convexity barrier'. MB further acknowledges support from the Isaac Newton Trust and the Leverhulme Trust Early Career Fellowship Grant 'Learning from mistakes: a supervised feedback-loop for imaging applications'. CBS acknowledges support from the EPSRC Centre 'EP/N014588/1' and the Cantab Capital Institute for the Mathematics of Information. The authors thank Martin Burger, Antonin Chambolle, Guy Gilboa and Michael M\"{o}ller for fruitful discussions.

\section*{Data Statement}

{\color{red}The corresponding \copyright MATLAB-code will be put into a repository and made publicly available as the final version is submitted.}

\section*{References}
\bibliographystyle{iopart-num}
\bibliography{generalisedsvd}

\end{document}